\theoremstyle{plain} 
\newtheorem{thm}{Theorem}[section]
\newtheorem{cor}[thm]{Corollary}
\newtheorem{prop}[thm]{Proposition}
\theoremstyle{definition}
\newtheorem{remark}[thm]{Remark}
\newtheorem{ex}[thm]{Example}
\definecolor{Ccolor}{rgb}{0,0.5,0}
\definecolor{Mcolor}{rgb}{1,0,0}
\definecolor{lightgray}{rgb}{0.6,0.6,0.6}
\newcommand{\h}{\widehat}
\newcommand{\sm}{\setminus}
\newcommand{\cone}{\operatorname{cone}}
\newcommand{\Span}{\operatorname{span}}
\newcommand{\supp}{\operatorname{supp}}
\newcommand{\wro}{\leq_{R}}
\newcommand{\seq}{\subseteq}
\def\grp#1{ \langle {#1} \rangle_{group}}
\def\min#1{ \textnormal{min} \{ {#1} \}  }
\def\objects#1{\mathcal{O}({#1})}
\newcommand\sbullet[1][.5]{\mathbin{\vcenter{\hbox{\scalebox{#1}{$\bullet$}}}}}
\def\Cubes#1#2{\textnormal{Cubes}_{{#1}}({#2}) } 
\def\Prod{\textnormal{Prod}}
\def\Edge#1{\textnormal{Edge}_{{#1}}}
\author[H.~Gimenez]{Harrison Gimenez}
\address[H.~Gimenez]{Department of Mathematics \\ B26 Hayes-Healy Building \\ University of Notre Dame, Indiana 46556, U.S.A.}
\email{hgimenez@nd.edu}
\subjclass[2010]{Primary 20F55}
\title{Coxeter systems, left inversion sets, and higher dimensional cubes}
\subjclass[2020]{20F55; 17B22, 20L05}
\begin{document}


\begin{abstract}
    Let $ (W,S)$ be a Coxeter system. We investigate the equation $ w(\Phi_{x}) = \Phi_{y}$ where $ w,x,y\in W$ and $ \Phi_{x}$, $\Phi_{y}$ denote the left inversion sets of $ x$ and $ y$. We then define a commutative square diagram called a Coxeter square which describes the relationship between 4 non-identity elements of the Coxeter group $ W$ and the equation $ w(\Phi_{x}) = \Phi_{y}$. Coxeter squares were first introduced by Dyer, Wang in \cite{dyer2011groupoids2} and \cite{dyer2019characterization}. Coxeter squares can be \textquotedblleft glued" together by compatible edges to form commutative diagrams in the shape of  higher dimensional cubes called Coxeter $n$-cubes, which were first defined by Dyer in Example 12.5 of \cite{dyer2011groupoids2}. When $ |W| < \infty$ and $ |S| = n$, we show that Coxeter $n$-cubes must exist within $ (W,S)$. We then prove results about Coxeter $n$-cubes in the $A_{n}$ Coxeter system.  We establish an explicit bijection between Coxeter $n$-cubes (modulo orientation) in $ A_{n}$ and binary trees with $n+1$ leaves. We also show that an element $x$ of $ A_{n}$ appears as the edge of some Coxeter $n$-cube if and only if $ x$ is a bigrassmannian permutation.

 \end{abstract}


 \maketitle


\section{Introduction}
Coxeter systems are interesting mathematical objects of study due to their relationship with group theory, representation theory, and combinatorics. The canonical root system $\Phi$ associated to a Coxeter system $ (W,S)$ as defined in \cite{humphreys1992reflection} encodes geometric and combinatorial information about the Coxeter group $ W$. Given an element $ x\in W$, we let $ \Phi_{x}$ denote the set of elements of $ \Phi^{+}$ that are sent into $ \Phi^{-}$ by the element $ x^{-1}$. The set $ \Phi_{x}$ is called the left inversion set of $ x$. This paper studies the equation $ w(\Phi_{x}) = \Phi_{y}$ where $ w,x,y\in W$. In other words, we are interested in determining when a left inversion set is transferred to a left inversion by an element of $ W$. As motivation for studying left inversion sets, we note that left inversion sets relate to reduced expressions of elements, the weak right order, and the root poset of a Coxeter system. In the case when $ x = s$, $ y =s'$ where $ s,s' \in S$, the equation $ w(\Phi_{s}) = \Phi_{s'}$ becomes $ w(\alpha_{s}) = \alpha_{s'}$ where $ \alpha_{s}$, $ \alpha_{s'}$ denote the simple roots corresponding to $ s$, $ s'$ respectively. Thus, the specific case of $w(\Phi_{s}) = \Phi_{s'} $ asks when $ w$ maps a simple root to a simple root. The transfer of simple roots is important in the fundamental \textquotedblleft exchange condition" of Coxeter systems and for studying morphisms of Brink-Howlett groupoids as defined by Brink and Howlett in \cite{brink1999normalizers}. Morphisms of a Brink-Howlett groupoid are elements of $ W$ that map subsets of simple roots to subsets of simple roots. Studying the more general equation $ w(\Phi_{x}) = \Phi_{y}$ may provide insights to when $ w$ maps a collection of left inversion sets to a collection of left inversion sets. Transferring collections of left inversion sets to collections of left inversion sets allows for a generalization of the Brink-Howlett groupoid to a wider class of groupoids (see Example 4 of \cite{dyer2019characterization}).

In Section \ref{backgroundmaterial}, we review some basic definitions and constructions related to the Coxeter system $ (W,S)$. We define the canonical root system associated to $ (W,S)$ as in Humphreys \cite{humphreys1992reflection}. We also state some propositions that connect left inversion sets to the reflection cocycle and weak right order of $ (W,S)$.

In Section \ref{backgroundgroupoids}, we define Brink-Howlett groupoids and establish their basic properties. We describe standard generators for Brink-Howlett groupoids, defined originally by Howlett and Deodhar. 

In Section \ref{theequation}, we prove some propositions regarding the equation $ w(\Phi_{x}) = \Phi_{y}$. We relate that equation to the equation $ wN(x)w^{-1}  =N(y)$ where $ N(x)$ denotes the reflection cocycle of $x$. In the specific case when $ |W|< \infty$, the equation $ w(\Phi_{x}) = \Phi_{y}$ implies that $ w$ is a right divisor of $ w_{0}x^{-1}$ where $ w_{0}$ denotes the longest element of $ W$.

In Section \ref{squaresandcubes}, we define Coxeter squares and Coxeter $n$-cubes. When $ |W| < \infty$ and $ |S| = n$, we show that Coxeter $n$-cubes must exist within $ (W,S)$. We then prove results about Coxeter $n$-cubes in the $A_{n}$ Coxeter system.  We establish an explicit bijection between Coxeter $n$-cubes (modulo orientation) in $ A_{n}$ and binary trees with $n+1$ leaves. We also show that an element $x$ of $ A_{n}$ appears as the edge of some Coxeter $n$-cube if and only if $ x$ is a bigrassmannian permutation.

\section{Background Material on Coxeter Systems} \label{backgroundmaterial}

We let $ \mathbb{N}$ denote the set of non-negative integers (so that $0 \in \mathbb{N}$). We define $ \mathbb{N}_{\geq 2} : = \{ n\in \mathbb{N} \mid n \geq 2 \} $. For a comprehensive background on Coxeter systems, we recommend both \cite{bjorner2005combinatorics} and \cite{humphreys1992reflection}.

\subsection{Coxeter systems}
Let $  S$ be a set. We define a function $ m : S\times S \to \mathbb{N} \cup \{ \infty \}$ such that the following hold:
\begin{enumerate}
    \item Let $ s , s' \in S$. Then $m(s,s') = 1$ if and only if $ s =s'$. If $ s \neq s'$, then $ m(s,s')\in \mathbb{N}_{\geq 2} \cup \{ \infty \}$.

    \item Let $ s , s' \in S$. Then $ m(s,s') = m(s' ,s)$.
\end{enumerate}
Such a function $ m: S \times S \to \mathbb{N} \cup \{ \infty \}$ satisfying the above properties is called a \emph{Coxeter matrix}. A \emph{Coxeter System} is an ordered pair $ (W,S)$ where 
\begin{enumerate}
    \item $ S$ is a set.

    \item $ m : S \times S \to \mathbb{N} \cup \{ \infty \}$ is a Coxeter matrix.

    \item $ W$ is a group such that $ W = \grp{s \in S \mid \forall s, s' \in S , \ (ss')^{m(s,s')} = 1}$.
\end{enumerate}
If $ m(s,s') = \infty$, then we omit the relation $ (ss')^{\infty} = 1$ from the above presentation of the group $W$. $W$ and $ S$ are referred to as the \emph{Coxeter group} and the \emph{Coxeter generators} respectively. We also define $T:= \{ wsw^{-1} \mid s\in S , \ w\in W \}$. $T$ is called the \emph{set of reflections of $ (W,S)$}.

\subsection{Standard parabolic subgroups}
Let $ (W,S)$ be a Coxeter system. Let $ J \seq S$. Define $ W_{J}$ to be the subgroup of $W$ generated by $J$. The subgroup $ W_{J}$ is called a \emph{standard parabolic subgroup of $W$}. $ (W_{J} , J)$ is a Coxeter system with the following presentation

$$ W_{J} : = \grp{s\in J \mid \forall s, s' \in J , \ (ss')^{m(s,s')} =1}$$
and $ (W_{J} ,J)$ is called a \emph{standard parabolic subsystem of $(W,S)$}.

\subsection{The length function}
Let $ (W,S)$ be a Coxeter system. Let $ w \in W$. We define the \emph{length function} $ \ell : W \to \mathbb{N}$ so that:

$$ \ell(w) : = \min{ n \mid w = s_{1}s_{2} \dots s_{n} , \   s_{i} \in S \textrm{ for all } i }$$
Furthermore, $ \ell(w) = 0$ if and only if $ w = 1$, where $ 1$ denotes the identity of the group $ W$. If $ J \seq S$, we let $ \ell_{J} : W_{J} \to \mathbb{N}$ denote the length function associated to the standard parabolic subsystem $ (W_{J} , J)$ of $(W,S)$. If $ w \in W_{J}$, then $ \ell_{J}(w) = \ell(w)$ by Theorem 5.5 part (b) of \cite{humphreys1992reflection}.
\subsection{The canonical root system}
Let $ (W,S)$ be a Coxeter system. Define the set of \emph{simple roots} $ \Pi : = \{ \alpha_{s} \mid s \in S  \}$ where $ \alpha_{s}$ is treated as a formal symbol. Let $ V$ denote the real vector space with basis $ \Pi$. Define a function $ B : \Pi \times \Pi \to \mathbb{R}$ in the following way:

$$ B(\alpha_{s} , \alpha_{s'}) = -\cos(\frac{\pi}{m(s, s')}) \textrm{ for all } s,s' \in S$$
where if $m(s,s') = \infty $, then $ - \cos(\frac{\pi}{ \infty}) := -1$. Note that $ B(\alpha_{s}, \alpha_{s'}) =B(\alpha_{s'}, \alpha_{s})$ since $ m(s,s') = m(s',s)$. Since $ \Pi$ is a basis of $V$, we can extend $B(-, - )$ via bilinearlity to become a symmetric bilinear form $ B: V \times V \to \mathbb{R}$. We will call $ B: V \times V \to \mathbb{R}$ \emph{the Coxeter form of $(W,S)$}.

Given $ s\in S $ and $ v\in V$, we define a map $ s: V \to V$ such that

$$ s(v) = v -2B(v, \alpha_{s}) \alpha_{s}$$
One can verify that the maps $ \{ s :V \to V \mid s\in S \}$ satisfy the relations of the Coxeter system $ (W,S)$ (see Proposition 5.3 of \cite{humphreys1992reflection}). There is a unique representation of $W$ on the vector space $ V$ such that $s\in S \seq W$ acts as the linear map defined above. This representation $ W \to \textrm{End}(V)$ is called \emph{the canonical representation of $(W,S)$}. The canonical representation is faithful (Corollary 5.4 of \cite{humphreys1992reflection}).

Using the canonical representation $ W \to \textrm{End}(V)$, we define \emph{the root system} $ \Phi \seq V$ such that

$$ \Phi : = \{ w(\alpha_{s}) \mid s\in S , \ w\in W  \}$$
Given some subset $H$ of the vector space $V$, we define 

$$ \cone(H) : = \{ c_{1}h_{1}+c_{2}h_{2}+ \dots + c_{k}h_{k} \mid k \in \mathbb{N}, \  c_{i} \geq 0, \  h_{i} \in H \ \forall i=1,2,\dots , k \}$$ 
where $ \cone(\emptyset) = \{ 0 \}$. The positive roots $ \Phi^{+} \seq \Phi$ and the negative roots $ \Phi^{-} \seq \Phi $ are defined as follows:

$$ \Phi^{+} = \Phi \cap \cone(\Pi)$$

$$ \Phi^{-} : = -(\Phi^{+}) = \Phi \cap \big( -\cone(\Pi) \big) $$
Note that $ \Pi \seq \Phi^{+}$. A general fact about $\Phi$ is that $ \Phi = \Phi^{+} \coprod \Phi^{-}$, where $ \coprod$ denotes disjoint union. If $ J \seq S$, then we define $\Pi_{J} : = \{ \alpha_{s} \mid s\in J  \}$, $ \Phi_{J} : = \{ w(\alpha_{s}) \mid s \in J , \ w\in W_{J} \}$, $ \Phi_{J}^{+} : = \Phi_{J} \cap \cone(\Pi_{J})$, and $\Phi_{J}^{-} := -(\Phi_{J}^{+}) = \Phi_{J} \cap (-\cone(\Pi_{J})) $. We have $ \Phi_{J} = \Phi_{J}^{+} \coprod \Phi_{J}^{-}$. Furthermore, we also have $ \Phi_{J} = \Phi \cap \Span(\Pi_{J})$.

If $ t\in T $ is a reflection, then we let $ \alpha_{t} \in \Phi^{+}$ denote the unique positive root associated to the reflection $ t$ (section 5.7 of \cite{humphreys1992reflection}). The generalized exchange condition is one of the most important properties of the canonical representation:

\begin{prop}[Generalized Exchange Condition] \label{generalizedexchange} 
    Let $ w = s_{1}s_{2} \dots s_{r}$. Let $t\in T $ such that $\ell(wt) < \ell(w)$. Then the following are true: 

    \begin{itemize}
        \item $w(\alpha_{t}) \in \Phi^{-}$

        \item There is an index $ i$ such that $ wt = s_{1} \dots \h{s_{i}} \dots s_{r}$. If $ \ell(w) = r$, then the index $i$ is unique.
    \end{itemize}
    
\end{prop}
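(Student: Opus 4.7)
The plan is to establish both bullets together by proving two self-contained lemmas.

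\emph{Lemma A} (deletion from a root condition): Suppose $w=s_1\dots s_r$ (not necessarily reduced) and let $\gamma\in\Phi^+$ satisfy $w(\gamma)\in\Phi^-$. Then there is an index $i$ with $\gamma=s_r s_{r-1}\dots s_{i+1}(\alpha_{s_i})$, and writing $t$ for the reflection with $\alpha_t=\gamma$, one has $wt=s_1\dots\widehat{s_i}\dots s_r$. I would prove this by examining the sequence
\[ \gamma,\ s_r(\gamma),\ s_{r-1}s_r(\gamma),\ \dots,\ s_1\dots s_r(\gamma)=w(\gamma), \]
whose first term is positive and whose last is negative. There must be a first sign flip at some step: the vector $s_{i+1}\dots s_r(\gamma)$ is positive while $s_i s_{i+1}\dots s_r(\gamma)$ is negative. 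Since a simple reflection sends only the corresponding simple root from $\Phi^+$ into $\Phi^-$ (a classical consequence of the formula $s(v)=v-2B(v,\alpha_s)\alpha_s$), the root $s_{i+1}\dots s_r(\gamma)$ must equal $\alpha_{s_i}$, pinning down $\gamma$ as claimed. The expression for $t$ is then $t=s_r\dots s_{i+1}s_i s_{i+1}\dots s_r$; substituting into $wt$ and successively cancelling the adjacent pairs $s_r s_r,\ s_{r-1}s_{r-1},\dots,s_{i+1}s_{i+1}$ produces $wt=s_1\dots\widehat{s_i}\dots s_r$.

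\emph{Lemma B} (root condition from the length drop): $\ell(wt)<\ell(w)$ implies $w(\alpha_t)\in\Phi^-$. I would prove the contrapositive. Note $wt(\alpha_t)=w(-\alpha_t)=-w(\alpha_t)$, so the desired implication is equivalent to the statement that $wt(\alpha_t)\in\Phi^-$ forces $\ell(wt)\geq\ell(w)$. But if $wt(\alpha_t)\in\Phi^-$, then choosing any expression $wt=s'_1\dots s'_q$ with $q=\ell(wt)$ and applying Lemma A to $wt$ and the positive root $\alpha_t$ yields an index $j$ with $w=(wt)t=s'_1\dots\widehat{s'_j}\dots s'_q$; hence $\ell(w)\leq q-1<\ell(wt)$, which is even stronger than needed.

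Combining the lemmas delivers both bullets: assuming $\ell(wt)<\ell(w)$, Lemma B gives the root condition $w(\alpha_t)\in\Phi^-$, and Lemma A then produces an index $i$ with $wt=s_1\dots\widehat{s_i}\dots s_r$. For the uniqueness claim when $\ell(w)=r$: if two distinct indices $i<j$ both realised the deletion, then left-cancelling $s_1\dots s_{i-1}$ and right-cancelling $s_{j+1}\dots s_r$ in $s_1\dots\widehat{s_i}\dots s_r=s_1\dots\widehat{s_j}\dots s_r$ yields $s_{i+1}\dots s_j=s_i s_{i+1}\dots s_{j-1}$; left-multiplying by $s_i$ gives $s_i s_{i+1}\dots s_j=s_{i+1}\dots s_{j-1}$, and substituting this back into $w=s_1\dots s_r$ shortens the expression by two letters, contradicting $\ell(w)=r$. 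The main obstacle I anticipate is the sign-flip argument in Lemma A, specifically identifying $s_{i+1}\dots s_r(\gamma)$ as the simple root $\alpha_{s_i}$: this rests on the classical observation that $s_i$ sends only $\alpha_{s_i}$ from $\Phi^+$ into $\Phi^-$, which must be established carefully from the explicit action of $s_i$ on $V$.
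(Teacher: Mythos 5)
Your proposal is correct. Note, however, that the paper does not actually prove this statement: its ``proof'' is a citation of Proposition 5.7 and Theorem 5.8 of Humphreys, and what you have written is essentially a reconstruction of those two results. Your Lemma A is the core of Humphreys' Theorem 5.8 (strong exchange): track the sequence $\gamma, s_r(\gamma), s_{r-1}s_r(\gamma), \dots, w(\gamma)$, locate the first sign change, use the fact that $s_i$ permutes $\Phi^+ \setminus \{\alpha_{s_i}\}$ to identify $s_{i+1}\cdots s_r(\gamma) = \alpha_{s_i}$, and then telescope $wt$ via the conjugation formula $t_{u(\alpha)} = u t_\alpha u^{-1}$. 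Your Lemma B is exactly the contrapositive mechanism of Humphreys' Proposition 5.7 (apply the exchange argument to $wt$ with the root $\alpha_t$ to force $\ell(w) < \ell(wt)$), and your uniqueness argument is the standard deletion computation. All the steps check out: the first-sign-flip index is well defined because $\Phi = \Phi^+ \coprod \Phi^-$, the cancellation $wt = s_1\cdots\widehat{s_i}\cdots s_r$ is a straightforward telescope, and the uniqueness contradiction correctly produces a word of length $r-2$ for $w$. What your version buys over the paper's is self-containedness: it isolates the only external inputs needed (the disjointness $\Phi = \Phi^+ \coprod \Phi^-$, the fact that a simple reflection sends only its own simple root negative, and the conjugation formula relating roots to reflections), all of which the paper already invokes elsewhere; what the paper's citation buys is brevity, deferring precisely this argument to the standard reference.
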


\begin{proof}
    Proposition 5.7 and Theorem 5.8 of \cite{humphreys1992reflection}.
\end{proof}

\subsection{Left Inversion Sets}
Given a Coxeter system $ (W,S)$ and an element $w\in W$, we define \emph{the left inversion set of $w$}:

$$ \Phi_{w} : = \{ \beta \in \Phi^{+} \mid w^{-1}(\beta) \in \Phi^{-}  \} $$
\begin{prop} \label{lengthandlset}
    Let $ (W,S)$ be a Coxeter system. Let $ w\in W$. Then $\ell(w) = |\Phi_{w}|$
\end{prop}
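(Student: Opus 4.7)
My plan is to prove the equality by explicitly enumerating $\Phi_w$ using a reduced expression for $w$. Fix a reduced expression $w = s_1 s_2 \cdots s_n$ with $n = \ell(w)$, and define
$$\beta_i := s_1 s_2 \cdots s_{i-1}(\alpha_{s_i}) \in V, \qquad i = 1, \ldots, n.$$
The target is to show that $\Phi_w = \{\beta_1, \ldots, \beta_n\}$ and that the $\beta_i$ are pairwise distinct; the cardinality statement then follows at once.

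For membership in $\Phi_w$, first observe each $\beta_i \in \Phi^+$. Indeed, since $s_1 \cdots s_i$ is reduced, we have $\ell((s_1 \cdots s_{i-1}) s_i) > \ell(s_1 \cdots s_{i-1})$, and Proposition \ref{generalizedexchange} (applied in the form governing when appending a generator is length-increasing) forces $s_1 \cdots s_{i-1}(\alpha_{s_i}) \in \Phi^+$. To check $w^{-1}(\beta_i) \in \Phi^-$, a direct computation gives $w^{-1}(\beta_i) = -s_n s_{n-1} \cdots s_{i+1}(\alpha_{s_i})$; since the reverse word $s_n s_{n-1} \cdots s_1$ is also reduced, the same argument applied to its prefix $s_n \cdots s_{i+1}$ shows $s_n \cdots s_{i+1}(\alpha_{s_i}) \in \Phi^+$, whence $w^{-1}(\beta_i) \in \Phi^-$.

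For distinctness, suppose $\beta_i = \beta_j$ with $i < j$. Cancelling the common prefix $s_1 \cdots s_{i-1}$ and using $s_k = s_k^{-1}$ yields $\alpha_{s_i} = s_i s_{i+1} \cdots s_{j-1}(\alpha_{s_j})$, so in the canonical representation the element $s_i s_{i+1} \cdots s_{j-1}$ intertwines the reflections $s_j$ and $s_i$. This yields the word identity $s_i s_{i+1} \cdots s_j = s_{i+1} \cdots s_{j-1}$; substituting it into the reduced expression for $w$ produces an expression of length $n - 2$, contradicting $\ell(w) = n$.

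For the reverse inclusion, let $\beta \in \Phi_w$ and consider the chain $\gamma_0 := \beta$, $\gamma_k := s_k s_{k-1} \cdots s_1(\beta)$ for $k = 1, \ldots, n$, so that $\gamma_n = w^{-1}(\beta) \in \Phi^-$ while $\gamma_0 \in \Phi^+$. Let $i$ be the least index with $\gamma_{i-1} \in \Phi^+$ and $\gamma_i = s_i(\gamma_{i-1}) \in \Phi^-$. Since $\alpha_{s_i}$ is the unique positive root sent into $\Phi^-$ by the simple reflection $s_i$, we conclude $\gamma_{i-1} = \alpha_{s_i}$, and inverting the action of $s_{i-1} \cdots s_1$ gives $\beta = s_1 \cdots s_{i-1}(\alpha_{s_i}) = \beta_i$. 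I expect the distinctness step to be the main obstacle, as it requires carefully extracting a length-reducing substitution from a conjugation identity; the positivity and surjectivity steps flow directly from the generalized exchange condition.
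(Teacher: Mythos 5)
Your proof is correct, but it takes a genuinely different route from the paper. The paper disposes of this proposition in two lines by citing Humphreys (Proposition 5.6(b)): $\ell(w) = \ell(w^{-1})$ equals the number of positive roots sent into $\Phi^{-}$ by $w^{-1}$, which is $|\Phi_{w}|$ by definition. You instead give a self-contained constructive proof: fixing a reduced expression $w = s_{1}\cdots s_{n}$, you show $\Phi_{w} = \{\beta_{1},\dots,\beta_{n}\}$ with $\beta_{i} = s_{1}\cdots s_{i-1}(\alpha_{s_{i}})$ pairwise distinct, which is exactly the content of the paper's subsequent Proposition \ref{leftinversionset} (there cited to Exercise 1 of Section 5.6 of Humphreys). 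So your argument proves both propositions at once, relying only on more primitive standard facts: the exchange condition (note that to get positivity of $\beta_i$ you need the ``length-increasing'' direction, which follows from Proposition \ref{generalizedexchange} applied to $ut$ and $t$ rather than to $u$ directly, since $ut(\alpha_{t}) = -u(\alpha_{t})$), the conjugation formula $u t_{\alpha} u^{-1} = t_{u(\alpha)}$, and the fact that a simple reflection $s$ permutes $\Phi^{+}\setminus\{\alpha_{s}\}$ (Humphreys Proposition 5.6(a)), which you use in the reverse-inclusion step and which the paper never states explicitly. Your distinctness argument (extracting the word identity $s_{i}\cdots s_{j} = s_{i+1}\cdots s_{j-1}$ from the conjugation identity and contradicting reducedness) and your sign-change argument for surjectivity are both sound. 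What the paper's approach buys is brevity; what yours buys is a logically lighter dependence on the literature and the explicit enumeration of $\Phi_{w}$, which the paper needs anyway.
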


\begin{proof}
    $\ell(w) = \ell(w^{-1})$, and $ \ell(w^{-1})$ is equal to the number of positive roots sent into $ \Phi^{-}$ by the element $ w^{-1}$ (Proposition 5.6 part (b) of \cite{humphreys1992reflection}). Hence,

    $$ \ell(w) = \ell(w^{-1}) = |\Phi_{w}|$$
\end{proof}

\begin{prop} \label{leftinversionset}
    Let $ (W,S)$ be a Coxeter system. Let $ w\in W$. Let $ w = s_{1}s_{2} \dots s_{n}$ with $ \ell(w) = n$. Also let $ \alpha_{i} \in \Pi$ denote the simple root corresponding to $ s_{i} \in S$. Then $  \Phi_{w} = \{ \beta_{i} \mid 1 \leq i \leq n  \} $ where

    $$ \beta_{i} : = s_{1}s_{2} \dots s_{i-1}(\alpha_{i})$$
    and $ \beta_{1} : = \alpha_{1}$.
\end{prop}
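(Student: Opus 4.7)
The plan is to show that each $\beta_i$ lies in $\Phi_w$, that the $\beta_i$ are pairwise distinct, and then appeal to Proposition \ref{lengthandlset} for equality via the cardinality count $|\{\beta_1,\dots,\beta_n\}| = n = \ell(w) = |\Phi_w|$. The workhorse throughout is the following consequence of Proposition \ref{generalizedexchange}: for any $u\in W$ and $s\in S$, one has $u(\alpha_s)\in \Phi^{+}$ if and only if $\ell(us)>\ell(u)$. Indeed, if $\ell(us)<\ell(u)$ then Proposition \ref{generalizedexchange} with $t=s$ yields $u(\alpha_s)\in\Phi^{-}$; conversely, if $\ell(us)>\ell(u)$, applying the same implication to $u'=us$ gives $us(\alpha_s)=-u(\alpha_s)\in\Phi^{-}$, hence $u(\alpha_s)\in\Phi^{+}$.

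First I would verify that each $\beta_i$ is positive. Because $s_1 s_2 \dots s_i$ is a prefix of a reduced expression it is itself reduced, so $\ell(s_1\dots s_i)=i>i-1=\ell(s_1\dots s_{i-1})$, and the equivalence above applied to $u=s_1\dots s_{i-1}$ gives $\beta_i=s_1\dots s_{i-1}(\alpha_i)\in\Phi^{+}$. Next, to show $\beta_i\in\Phi_w$ I would compute
\[
w^{-1}(\beta_i)=(s_n\dots s_1)(s_1\dots s_{i-1}(\alpha_i))=s_n\dots s_i(\alpha_i)=-s_n\dots s_{i+1}(\alpha_i),
\]
and then apply the equivalence to $u=s_n\dots s_{i+1}$ and $s=s_i$: the suffix $s_i s_{i+1}\dots s_n$ is likewise reduced, so $\ell(s_n\dots s_{i+1}s_i)>\ell(s_n\dots s_{i+1})$, whence $s_n\dots s_{i+1}(\alpha_i)\in\Phi^{+}$ and $w^{-1}(\beta_i)\in\Phi^{-}$.

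For distinctness, suppose $\beta_i=\beta_j$ with $i<j$. Applying $(s_1\dots s_{i-1})^{-1}=s_{i-1}\dots s_1$ to both sides and telescoping the initial reflections yields $\alpha_i=s_i s_{i+1}\dots s_{j-1}(\alpha_j)$, or equivalently $-\alpha_i=s_{i+1}\dots s_{j-1}(\alpha_j)$. The left side lies in $\Phi^{-}$, but the internal subword $s_{i+1}\dots s_j$ of $s_1\dots s_n$ is reduced, so the same equivalence forces $s_{i+1}\dots s_{j-1}(\alpha_j)\in\Phi^{+}$, a contradiction. Combined with Proposition \ref{lengthandlset} this finishes the proof. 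The only real subtlety is the recurring need to notice that various prefixes, suffixes, and internal subwords of $s_1\dots s_n$ are themselves reduced—the single combinatorial fact that underlies every positivity claim in the argument.
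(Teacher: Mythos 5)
Your proof is correct. Note that the paper itself gives no argument here: it merely observes that $w^{-1}=s_{n}\dots s_{2}s_{1}$ is reduced and cites Exercise 1 of Section 5.6 of \cite{humphreys1992reflection}, so what you have written is, in effect, a solution of the exercise the paper outsources. Your route --- deriving the equivalence $u(\alpha_{s})\in\Phi^{+}\iff \ell(us)>\ell(u)$ from Proposition \ref{generalizedexchange}, using it to show each $\beta_{i}$ is a positive root sent to $\Phi^{-}$ by $w^{-1}$, proving pairwise distinctness by the telescoping argument, and closing with the cardinality count $|\Phi_{w}|=\ell(w)=n$ from Proposition \ref{lengthandlset} --- is the standard proof of that exercise, and all the implicit facts you lean on (prefixes, suffixes, and contiguous factors of a reduced word are reduced; $\ell(us)\neq\ell(u)$ for $s\in S$) are elementary and correctly invoked. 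What the paper's citation buys is brevity; what your argument buys is self-containment: the proposition then rests only on results already stated in the paper (Propositions \ref{generalizedexchange} and \ref{lengthandlset}) rather than on an external reference, which is arguably preferable given how frequently this proposition is used later.
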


\begin{proof}
    Since $ w = s_{1}s_{2} \dots s_{n}$ is a reduced expression, it follows that $ w^{-1} = s_{n} \dots s_{2}s_{1}$ is a reduced expression. The result now follows from Exercise 1 of Section 5.6 of \cite{humphreys1992reflection}.
\end{proof}

\subsection{The weak right order}
Given a Coxeter system $ (W,S)$ and elements $ x,y \in W$, we define a relation $ x \wro y$ if and only if

$$ y = x s_{1}s_{2} \dots s_{n} \textrm{ and } \ell(xs_{1}s_{2} \dots s_{i}) = \ell(x) + i \textrm{ for all } 0 \leq i \leq n$$
where $ s_{i} \in S$ for all $i$. The relation $ \wro $ is called \emph{the weak right order of $(W,S)$}. The weak right order is a partial order.

\begin{prop} \label{wroandlset}
    Let $ (W,S)$ be a Coxeter system and let $ x,y \in W$. Then $ x \wro y$ if and only if $ \Phi_{x} \seq \Phi_{y}$.
\end{prop}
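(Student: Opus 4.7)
The forward direction should be routine: assuming $x \leq_R y$, pick a reduced expression $y = s_1 s_2 \cdots s_n$ whose initial segment $s_1 \cdots s_m$ is a reduced expression for $x$. Proposition \ref{leftinversionset} then describes $\Phi_y$ as the set of $n$ roots $\beta_i = s_1 \cdots s_{i-1}(\alpha_i)$, and the first $m$ of these form exactly $\Phi_x$. Hence $\Phi_x \subseteq \Phi_y$.

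For the reverse direction, my plan is to avoid induction and instead prove the sharper additive statement $\ell(y) = \ell(x) + \ell(x^{-1}y)$, which by combining with $y = x \cdot (x^{-1}y)$ is equivalent to the existence of a reduced expression for $y$ extending a reduced expression of $x$, i.e., $x \leq_R y$. Using Proposition \ref{lengthandlset}, this additive equation is equivalent to the cardinality identity $|\Phi_y| = |\Phi_x| + |\Phi_{x^{-1}y}|$, which I will get from an actual disjoint-union decomposition
$$\Phi_y = \Phi_x \sqcup x(\Phi_{x^{-1}y}).$$

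To establish this decomposition, I will verify four things, each a short check using only the definition $\Phi_w = \{\beta \in \Phi^+ : w^{-1}\beta \in \Phi^-\}$. First, $\Phi_x \subseteq \Phi_y$ by hypothesis. Second, for $\gamma \in \Phi_{x^{-1}y}$, the hypothesis $\Phi_x \subseteq \Phi_y$ forces $x\gamma \in \Phi^+$: otherwise $-x\gamma$ would lie in $\Phi_x$, hence in $\Phi_y$, contradicting $\gamma \in \Phi_{x^{-1}y}$. The same computation shows $x\gamma \in \Phi_y$, giving $x(\Phi_{x^{-1}y}) \subseteq \Phi_y$. Third, disjointness $\Phi_x \cap x(\Phi_{x^{-1}y}) = \emptyset$ is immediate since elements of $x(\Phi_{x^{-1}y})$ are mapped into $\Phi^+$ by $x^{-1}$, while elements of $\Phi_x$ are mapped into $\Phi^-$. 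Fourth, any $\beta \in \Phi_y \setminus \Phi_x$ satisfies $x^{-1}\beta \in \Phi^+$, and setting $\gamma := x^{-1}\beta$ gives $\gamma \in \Phi_{x^{-1}y}$, so $\beta \in x(\Phi_{x^{-1}y})$.

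The main (and only) obstacle is the second bullet above: showing $x(\Phi_{x^{-1}y}) \subseteq \Phi^+$. Without the hypothesis $\Phi_x \subseteq \Phi_y$ this fails, so it is precisely the place the assumption is used. Once the decomposition is in place, the rest of the reverse direction is purely a length-counting argument followed by the observation that a product of reduced expressions whose lengths add correctly yields a reduced expression, which is exactly the condition for $x \leq_R y$ in the definition of the weak right order.
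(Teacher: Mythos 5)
Your proof is correct, and it is worth noting that it is genuinely more self-contained than what the paper does: the paper does not prove this proposition at all, but simply cites it as a well-known fact (Section 1.3 of Dyer's \emph{On the weak order of Coxeter groups}). Your forward direction is the standard argument via Proposition \ref{leftinversionset} and is fine. For the reverse direction, your decomposition $\Phi_{y} = \Phi_{x} \sqcup x(\Phi_{x^{-1}y})$ is exactly the root-system analogue of the cocycle identity in Proposition \ref{reflectioncocycle} (namely $N(xz) = N(x) \sqcup xN(z)x^{-1}$ when lengths add), but with an important twist: the standard identity is derived \emph{from} length-additivity, whereas you derive the disjoint decomposition \emph{from} the containment hypothesis $\Phi_{x} \seq \Phi_{y}$ and then conclude length-additivity by counting, which is precisely the direction needed for the converse. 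All four verification steps check out — in particular your step (2), where containment is used to rule out $x\gamma \in \Phi^{-}$ for $\gamma \in \Phi_{x^{-1}y}$, is the crux and is argued correctly — and the final passage from $\ell(y) = \ell(x) + \ell(x^{-1}y)$ to $x \wro y$ is the standard observation that intermediate lengths are forced to be $\ell(x) + i$ by the triangle inequality. The trade-off is the obvious one: the paper's citation keeps the exposition short, while your argument makes the result self-contained using only Proposition \ref{lengthandlset} and the definition of $\Phi_{w}$, at the cost of about a page.
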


\begin{proof}
    The proposition is a well-known fact. See section 1.3 of \cite{dyer2019weak}.
\end{proof}

\subsection{The reflection cocycle}
Let $ (W,S)$ be a Coxeter system with set of reflections $ T = \{ wsw^{-1} \mid s\in S , \ w\in W \}$. Let $ N : W \to \mathcal{P}(T)$ be defined by

$$ w \longmapsto N(w) : = \{  t\in T \mid \ell(tw) < \ell(w)  \}$$
We call $ N :W \to \mathcal{P}(T)$ the reflection cocycle of $ (W,S)$.

\begin{prop} \label{reflectioncocycle}
    Let $ (W,S)$ be a Coxeter system. Let $ x,y \in W$. Then:

    $$ N(xy) = N(x) \Delta xN(y)x^{-1}$$
    where $ \Delta$ denotes the symmetric differences of sets. Also, 
    
    $$ \ell(xy) = \ell(x) +\ell(y) \iff  N(xy) = N(x) \coprod xN(y)x^{-1}$$
    where $ \coprod $ denotes the disjoint union.
\end{prop}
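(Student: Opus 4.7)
The plan is to verify the symmetric difference identity pointwise on reflections $t \in T$, then deduce the length characterization from cardinality. The main tool is the characterization $t \in N(w) \iff w^{-1}(\alpha_t) \in \Phi^-$, which combines Proposition \ref{lengthandlset} with the bijection between positive roots and reflections together with Proposition \ref{generalizedexchange}, giving $|N(w)| = |\Phi_w| = \ell(w)$.

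For the first equation, I would fix $t \in T$ and set $\beta := x^{-1}(\alpha_t)$. Membership in $N(xy)$ is then equivalent to $y^{-1}(\beta) \in \Phi^-$, and membership in $N(x)$ is equivalent to $\beta \in \Phi^-$. For $xN(y)x^{-1}$, one must note that the conjugate reflection $x^{-1}tx$ has positive root $\beta$ when $\beta \in \Phi^+$ and $-\beta$ when $\beta \in \Phi^-$, so $t \in xN(y)x^{-1}$ iff $y^{-1}(\beta) \in \Phi^-$ (in the first case) or $y^{-1}(-\beta) \in \Phi^-$ (in the second). A routine case split on the sign of $\beta$ then verifies that $t \in N(xy)$ holds exactly when $t$ belongs to exactly one of $N(x)$ and $xN(y)x^{-1}$, i.e., to $N(x)\Delta xN(y)x^{-1}$.

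For the second equation, I would use $|N(w)| = \ell(w)$ together with $|A \Delta B| = |A| + |B| - 2|A \cap B|$ to obtain
\[
\ell(xy) \;=\; \ell(x) + \ell(y) - 2\bigl|N(x) \cap xN(y)x^{-1}\bigr|.
\]
Since $\ell(xy) \le \ell(x) + \ell(y)$ always, equality is equivalent to $N(x) \cap xN(y)x^{-1} = \emptyset$, which, in conjunction with the first equation, is equivalent to $N(xy) = N(x) \coprod xN(y)x^{-1}$.

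The main delicate point is the sign bookkeeping for the conjugate reflection: the positive root of $x^{-1}tx$ is not $x^{-1}(\alpha_t)$ in general but rather its positive representative, and it is precisely this sign flip that produces a symmetric difference instead of a plain union. Once this subtlety is pinned down, the verification reduces to checking the two sign cases for $\beta$, and the length statement follows immediately from the cardinality identity.
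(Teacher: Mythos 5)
Your proposal is correct, but it is genuinely different from what the paper does: the paper offers no argument at all for this proposition and simply cites the beginning of the proof of Lemma 4.1 of Dyer's \emph{On the weak order of Coxeter groups}, whereas you give a self-contained verification. Your argument is sound: the characterization $t \in N(w) \iff w^{-1}(\alpha_t) \in \Phi^-$ follows from Proposition \ref{generalizedexchange} (applied to $w^{-1}$, together with the standard contrapositive trick to get the converse direction), and your pointwise case split on the sign of $\beta = x^{-1}(\alpha_t)$ correctly accounts for the key subtlety that the positive root of the conjugate reflection $x^{-1}tx$ is $\beta$ or $-\beta$ according to that sign; this is exactly what turns the naive union into a symmetric difference, and both cases check out. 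The cardinality step is also fine, since every $N(w)$ is finite of size $\ell(w)$ and conjugation by $x$ is a bijection on $T$, so $|xN(y)x^{-1}| = \ell(y)$ and the identity $\ell(xy) = \ell(x) + \ell(y) - 2\,|N(x) \cap xN(y)x^{-1}|$ immediately gives the equivalence (you do not even need to invoke the triangle inequality $\ell(xy) \leq \ell(x) + \ell(y)$; it falls out of the identity). What your route buys is a proof readable entirely within the toolkit the paper has already set up (Propositions \ref{generalizedexchange}, \ref{lengthandlset}, and the root--reflection bijection), at the cost of a page of sign bookkeeping; what the paper's citation buys is brevity, at the cost of sending the reader to an external source for what is, as you show, an elementary argument.
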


\begin{proof}
    See the beginning of the proof of Lemma 4.1 in \cite{dyer2019weak}.
\end{proof}

\begin{prop} \label{equivalenceofwro}
    Let $ (W,S)$ be a Coxeter system. Let $ x,y \in W $. Then the following are true:

    \begin{enumerate}
        \item There is a bijective correspondence between the sets $ N(x)$ and $ \Phi_{x}$ given by $ t \longmapsto \alpha_{t}$ where $ t \in N(x)$.

        \item $ N(x) \seq N(y) \iff \Phi_{x} \seq \Phi_{y} \iff x \wro y $ 

        \item $ |N(x)| = |\Phi_{x}| = \ell(x)$
        
    \end{enumerate}
\end{prop}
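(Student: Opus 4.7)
The plan is to prove part (1) first, and then deduce parts (2) and (3) as essentially formal consequences of (1) together with Propositions~\ref{lengthandlset} and~\ref{wroandlset}.

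For part (1), I would use the well-known global bijection between $T$ and $\Phi^{+}$ given by $t \mapsto \alpha_{t}$ (cited earlier from Section 5.7 of Humphreys), and show it restricts to a bijection $N(x) \to \Phi_{x}$. To check the map is well-defined, I would take $t \in N(x)$, so $\ell(tx) < \ell(x)$. Using $t = t^{-1}$ together with $\ell(w) = \ell(w^{-1})$, this rewrites as $\ell(x^{-1}t) < \ell(x^{-1})$. Applying Proposition~\ref{generalizedexchange} with $w = x^{-1}$ then yields $x^{-1}(\alpha_{t}) \in \Phi^{-}$, i.e.\ $\alpha_{t} \in \Phi_{x}$. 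Injectivity is immediate since $t \mapsto \alpha_{t}$ is injective on all of $T$.

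For surjectivity, given $\beta \in \Phi_{x}$, let $t \in T$ be the unique reflection with $\alpha_{t} = \beta$; I must show $t \in N(x)$, i.e.\ $\ell(x^{-1}t) < \ell(x^{-1})$. This requires the converse of Proposition~\ref{generalizedexchange}, which is not stated explicitly but follows by a short argument: since $\ell(x^{-1}t)$ differs from $\ell(x^{-1})$ by exactly $\pm 1$, if instead $\ell(x^{-1}t) > \ell(x^{-1})$, then applying Proposition~\ref{generalizedexchange} to $w = x^{-1}t$ (with the same reflection $t$) gives $x^{-1}t(\alpha_{t}) = -x^{-1}(\alpha_{t}) \in \Phi^{-}$, so $x^{-1}(\alpha_{t}) \in \Phi^{+}$, contradicting $\beta \in \Phi_{x}$.

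Part (3) is then immediate: $|N(x)| = |\Phi_{x}|$ by the bijection in (1), and $|\Phi_{x}| = \ell(x)$ by Proposition~\ref{lengthandlset}. For part (2), the equivalence $\Phi_{x} \subseteq \Phi_{y} \iff x \leq_{R} y$ is exactly Proposition~\ref{wroandlset}. The equivalence $N(x) \subseteq N(y) \iff \Phi_{x} \subseteq \Phi_{y}$ follows from (1) because the map $T \to \Phi^{+}$, $t \mapsto \alpha_{t}$, is a single global bijection whose restrictions produce $\Phi_{x}$ and $\Phi_{y}$; inclusions of subsets of $T$ therefore correspond to inclusions of their images in $\Phi^{+}$.

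The main obstacle, such as it is, lies in (1): one must be careful with the identification $\ell(tx) = \ell(x^{-1}t)$ in order to bring the statement into a form to which Proposition~\ref{generalizedexchange} applies, and one must supply the converse direction of that proposition (since only one direction is recorded above), which I would do via the short contradiction argument described. Everything else is bookkeeping on top of the results already in the excerpt.
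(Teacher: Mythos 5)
Your proposal takes essentially the same route as the paper: the paper's entire proof is that part (1) ``follows immediately'' from Proposition~\ref{generalizedexchange} and that (2) and (3) are consequences of (1) together with the previous propositions (namely \ref{lengthandlset} and \ref{wroandlset}); your write-up simply supplies the details --- well-definedness, injectivity, and surjectivity of the restriction of $t \mapsto \alpha_{t}$, plus the bookkeeping for (2) and (3) --- so in substance the two proofs agree.

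One justification in your surjectivity step is false as stated: for a non-simple reflection $t$, the quantity $\ell(x^{-1}t)$ need not differ from $\ell(x^{-1})$ by exactly $\pm 1$. For instance, in $A_{2}$ take $x = t = s_{1}s_{2}s_{1}$, so that $\alpha_{t} = \alpha_{1}+\alpha_{2} \in \Phi_{x}$; then $\ell(x^{-1}t) = 0$ while $\ell(x^{-1}) = 3$. What your contradiction argument actually requires is only the weaker fact that $\ell(x^{-1}t) \neq \ell(x^{-1})$, and this is true for every reflection $t$: since $w \mapsto (-1)^{\ell(w)}$ is a group homomorphism and every reflection $wsw^{-1}$ has odd length, $\ell(x^{-1}t)$ and $\ell(x^{-1})$ have opposite parities. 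With that one-line replacement, your application of Proposition~\ref{generalizedexchange} to $w = x^{-1}t$ (using $t(\alpha_{t}) = -\alpha_{t}$) goes through, and the rest of the proposal is correct.
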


\begin{proof}
    Part (1) follows immediately from \ref{generalizedexchange}. Statements (2) and (3) are consequences of part (1) and the previous propositions.
\end{proof}

\section{Background Material on Brink-Howlett groupoids} \label{backgroundgroupoids}

\subsection{Definition of Brink-Howlett groupoids}
 Recall that a groupoid is a small category in which every morphism is invertible. Let $ (W,S)$ be a Coxeter system. Let $ J \seq S$. Define the following:

$$  \objects{\Pi_{J}} : = \{  \Pi_{K} \seq \Pi \mid K \seq S , \ w\in W , \ w(\Pi_{J}) = \Pi_{K} \}$$
We let $ G(\Pi_{J})$ denote \emph{the Brink-Howlett groupoid generated at the object $ \objects{\Pi_{J}}$}. The objects of $ G(\Pi_{J})$ are the elements of the set $ \objects{\Pi_{J}}$. A morphism of $ G(\Pi_{J})$ is defined to be an ordered triple $ ( \Pi_{L} , w , \Pi_{K})$ where $ \Pi_{K}, \Pi_{L} \in \objects{\Pi_{J}}$ and $ w\in W$ such that $ w(\Pi_{K}) = \Pi_{L}$. If $ (\Pi_{L'}, w ' , \Pi_{K'})$ and $ (\Pi_{L}  ,w  ,\Pi_{K})$ are morphisms of $ G(\Pi_{J})$ such that $ K' = L$, then we define the composition of morphisms as:

$$ (\Pi_{L'}, w ' , \Pi_{K'})(\Pi_{L}  ,w  ,\Pi_{K}) = (\Pi_{L'} , w'w , \Pi_{K})$$
It is trivial to check that $w'w(\Pi_{K}) = \Pi_{L'} $. If $ (\Pi_{L}  ,w  ,\Pi_{K})$ is a morphism, then $ (\Pi_{K}  ,w^{-1}  ,\Pi_{L})$ is the inverse morphism.

\subsection{Generators of the groupoid}
The following propostion was proven by Brink and Howlett in \cite{brink1999normalizers} by the use of a proposition by Deodhar. The proposition by Deodhar is stated as Proposition 2.2 of \cite{brink1999normalizers} and as Proposition 4.2 of \cite{deodhar1982root}.

\begin{prop} \label{generatorexistence}
Let $ I_{1} , I_{2} \seq S$ such that $ I_{1} \cap I_{2} = \emptyset$. If $ \Phi_{I_{1} \cup I_{2}} \sm \Phi_{I_{1}}$ is a finite set, then there exists a unique element $ \nu( \Pi_{I_{2}} , \Pi_{I_{1}} ) \in W_{I_{1} \cup I_{2}}$ such that \newline $ \nu( \Pi_{I_{2}}, \Pi_{I_{1}}) ( \Phi^{+}_{I_{1} \cup I_{2}} \sm \Phi^{+}_{I_{1}}) \seq \Phi^{-}_{I_{1} \cup I_{2}}$ and $ \nu( \Pi_{I_{2}} , \Pi_{I_{1}})(\Phi_{I_{1}}^{+}) \seq \Phi^{+}_{I_{1} \cup I_{2}}$. Furthermore, $ \nu(\Pi_{I_{2}} , \Pi_{I_{1}}) (\Pi_{I_{1}}) \seq \Pi_{I_{1} \cup I_{2}}$, so $ \nu(\Pi_{I_{2}}, \Pi_{I_{1}})$ is a morphism of the Brink-Howlett groupoid $ G(\Pi_{I_{1}})$.

\end{prop}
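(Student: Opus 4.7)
The plan is to identify $\nu = \nu(\Pi_{I_2}, \Pi_{I_1})$ as the unique element of $W_{I_1 \cup I_2}$ whose inverse has left inversion set equal to $\Psi := \Phi_{I_1 \cup I_2}^{+} \setminus \Phi_{I_1}^{+}$ (finite by hypothesis), and then to verify the three listed properties. I would work throughout inside the parabolic subsystem $(W_{I_1 \cup I_2}, I_1 \cup I_2)$.

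For existence and uniqueness of $\nu$: The set $\Psi$ is biconvex in $\Phi_{I_1 \cup I_2}^{+}$, since membership in $\Psi$ is equivalent to having a strictly positive coefficient along $\Pi_{I_2}$ (a condition preserved under positive sums of roots), and the complement $\Phi_{I_1}^{+}$ is itself the positive root system of the parabolic $W_{I_1}$. Using either the classical fact that a finite biconvex subset of $\Phi^{+}$ is the left inversion set of a unique element, or an induction on $|\Psi|$ that picks off a simple root $\alpha_{t}\in\Psi$ (any $t\in I_2$ works) at a time via the biconvexity-preserving transform $\Psi \mapsto t(\Psi \setminus \{\alpha_{t}\})$ and reconstructs $\nu$ by Proposition \ref{leftinversionset}, I obtain $\nu \in W_{I_1 \cup I_2}$ with $\Phi_{\nu^{-1}} = \Psi$. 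The first two containments then follow at once: $\nu(\Psi) \subseteq \Phi_{I_1 \cup I_2}^{-}$ by the definition of $\Phi_{\nu^{-1}}$, and $\nu(\Phi_{I_1}^{+}) = \nu(\Phi_{I_1 \cup I_2}^{+} \setminus \Psi) \subseteq \Phi_{I_1 \cup I_2}^{+}$ because roots outside the inversion set are sent to the positive side. Uniqueness is then immediate from Propositions \ref{wroandlset} and \ref{equivalenceofwro}: any other element satisfying both containments has the same left inversion set on its inverse, hence equals $\nu$.

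The main obstacle is showing $\nu(\Pi_{I_1}) \subseteq \Pi_{I_1 \cup I_2}$. I would first establish the decomposition
\[
\Phi_{I_1 \cup I_2}^{+} = \nu(\Phi_{I_1}^{+}) \sqcup \bigl(-\nu(\Psi)\bigr)
\]
from the bijectivity of $\nu$ on $\Phi_{I_1 \cup I_2}$ together with the two containments already verified. Suppose for contradiction that $\beta := \nu(\alpha_{s})$ is not simple for some $s \in I_{1}$. Using $B(\beta, \beta) = 1$ and the expansion of $\beta$ in $\cone(\Pi_{I_1 \cup I_2})$ with nonnegative coefficients, one finds a simple root $\alpha_{u} \in \Pi_{I_1 \cup I_2}$ with $B(\beta, \alpha_{u}) > 0$ and $\beta \neq \alpha_{u}$, so that $\beta' := u(\beta) = \beta - 2B(\beta, \alpha_{u})\alpha_{u}$ is a positive root in $\Phi_{I_1 \cup I_2}^{+}$ strictly less than $\beta$ in the root order. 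The decomposition places each of $\beta'$ and $\alpha_{u}$ in one of the two pieces $\nu(\Phi_{I_1}^{+})$ or $-\nu(\Psi)$, yielding four cases. In each case, applying $\nu^{-1}$ to the identity $\beta = \beta' + 2B(\beta, \alpha_{u})\alpha_{u}$ and then comparing $\Pi_{I_1}$- and $\Pi_{I_2}$-coefficients on both sides forces a contradiction: for instance, $\alpha_{s}$ is written nontrivially as a positive combination of two nonzero elements of $\cone(\Pi_{I_1})$ (impossible since $\alpha_{s}$ is simple), or an alleged element of $\Psi$ ends up with every $\Pi_{I_2}$-coefficient zero (contradicting membership in $\Psi$), or $\alpha_{s}$ is expressed as a strictly negative combination of simple roots. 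This four-way case analysis is the most technical piece of the argument.
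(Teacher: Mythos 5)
Your proof is essentially correct, but there is no "paper's proof" to compare it against: the paper defers entirely to the paragraph after Proposition 2.1 of \cite{brink1999normalizers}, which in turn rests on a proposition of Deodhar from \cite{deodhar1982root}. What you have written is a self-contained reconstruction of that missing argument, which is genuinely useful. One caveat on provenance: your ``classical fact'' (a finite biconvex subset of $\Phi^{+}$ is the left inversion set of a unique element) is itself a nontrivial external import and is close kin to the very Deodhar result that Brink--Howlett invoke, so the existence/uniqueness half of your proof swaps one black box for a comparable one; your genuine additions are the verification that $\Psi = \Phi^{+}_{I_{1}\cup I_{2}} \setminus \Phi^{+}_{I_{1}}$ is biconvex, the decomposition $\Phi^{+}_{I_{1}\cup I_{2}} = \nu(\Phi^{+}_{I_{1}}) \sqcup \bigl(-\nu(\Psi)\bigr)$, and the four-case pull-back argument for $\nu(\Pi_{I_{1}}) \subseteq \Pi_{I_{1}\cup I_{2}}$, all of which I checked and which do close as you describe. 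Two small repairs are needed. First, in your first case the parenthetical ``impossible since $\alpha_{s}$ is simple'' is not literally true for arbitrary elements of $\cone(\Pi_{I_{1}})$ (e.g.\ $\alpha_{s} = \tfrac{1}{2}\alpha_{s} + \tfrac{1}{2}\alpha_{s}$); you must use that the two summands are roots, hence satisfy $B(\gamma,\gamma) = 1$, which forces both to equal $\alpha_{s}$ and then forces the coefficient $2B(\beta,\alpha_{u})$ to vanish, giving the contradiction. Second, your alternative inductive route has a gap: ``any $t \in I_{2}$ works'' locates a simple root of $\Psi$ only at the first step; after replacing $\Psi$ by $t(\Psi \setminus \{\alpha_{t}\})$ the set is no longer of the form $\Phi^{+}_{J \cup K} \setminus \Phi^{+}_{J}$, so you would need the general (true, but not free) lemma that every nonempty finite biconvex subset of $\Phi^{+}$ contains a simple root. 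Since your primary route invokes the classical fact directly, neither point is fatal.
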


\begin{proof}
    See the paragraph after Proposition 2.1 in \cite{brink1999normalizers}.
\end{proof}
Let $ s \in S$. Let $ \alpha \in \Pi$ be the simple root associated to $s$. If $ I_{2} = \{ s \} $, then $ \Pi_{I_{2}} = \{ \alpha \} $ and we write $ \nu(\alpha , \Pi_{I_{1}}) := \nu(\{ \alpha \} , \Pi_{I_{1}})$. We call elements of the form $ \nu( \alpha, \Pi_{I_{1}})$ \emph{Brink-Howlett generators}. Theorem \ref{decomposingmorphisms} justifies calling these elements generators of the Brink-Howlett groupoid.

\subsection{Decomposing morphisms}
\begin{thm} \label{decomposingmorphisms}
    Let $( \Pi_{L} , w ,\Pi_{K})$ be a morphism of the groupoid $G(\Pi_{J}) $. Then:

    $$ ( \Pi_{L} , w ,\Pi_{K}) = g_{n}g_{n-1} \dots g_{2}g_{1}$$
    where $ g_{i} := (\Pi_{J_{i}} , \nu(\alpha_{i} , \Pi_{J_{i-1}}) , \Pi_{J_{i-1}})$, $ \Pi_{J_{0}} = \Pi_{K}$, $\Pi_{J_{n}} = \Pi_{L} $, and \newline $ \ell(w) = \sum_{i=1}^{n} \ell(\nu(\alpha_{i} , \Pi_{J_{i-1}}))$. Furthermore, for any $ \alpha \in \Pi$ such that $ w(\alpha) \in \Phi^{-}$, we can always choose $ \alpha_{1} = \alpha$.
\end{thm}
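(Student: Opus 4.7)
I would proceed by induction on $\ell(w)$. The base case $\ell(w)=0$ forces $w=1$ and $\Pi_K = \Pi_L$, realized by the empty product (the ``furthermore'' clause being vacuous, since $w(\alpha)=\alpha \in \Phi^+$ for every $\alpha\in\Pi$). For the inductive step with $\ell(w)\geq 1$, first choose $\alpha_1$: if some $\alpha \in \Pi$ with $w(\alpha)\in\Phi^-$ is prescribed, take $\alpha_1 := \alpha$; otherwise select any such $\alpha_1$, which exists because $\ell(w)\geq 1$ yields some $s\in S$ with $\ell(ws)<\ell(w)$, whence $w(\alpha_s)\in\Phi^-$ by Proposition~\ref{generalizedexchange}. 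Since $w(\Pi_K)=\Pi_L\subseteq\Phi^+$, necessarily $\alpha_1\notin\Pi_K$; let $s_1\in S$ denote the associated simple reflection.

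The crux is to produce the Brink--Howlett generator $g_1 := \nu(\alpha_1, \Pi_K)$. By Proposition~\ref{generatorexistence}, this amounts to showing $\Phi^+_{K\cup\{s_1\}}\setminus\Phi^+_K$ is finite, and I would establish the stronger inclusion
\[
\Phi^+_{K\cup\{s_1\}}\setminus\Phi^+_K \;\subseteq\; \Phi_{w^{-1}},
\]
whose right-hand side has size $\ell(w^{-1})=\ell(w)<\infty$. For $\beta = c\alpha_1 + \sum_{\gamma\in\Pi_K}c_\gamma\gamma$ in the left-hand set (with $c>0$, $c_\gamma\geq 0$), compute $w(\beta) = c\,w(\alpha_1) + \sum c_\gamma w(\gamma)$, where each $w(\gamma)\in\Pi_L$. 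Expanding $w(\alpha_1) = -\sum_{\delta\in\Pi} d_\delta\delta\in\Phi^-$, the key linear-algebra point is that some $d_{\delta_0}$ with $\delta_0\notin\Pi_L$ must be strictly positive: otherwise $w(\alpha_1)\in\Span(\Pi_L)$, forcing $\alpha_1 = w^{-1}(w(\alpha_1))\in w^{-1}(\Span(\Pi_L))=\Span(\Pi_K)$, contradicting $\alpha_1\notin\Pi_K$. The coefficient of $\delta_0$ in $w(\beta)$ is then $-c\,d_{\delta_0}<0$, forcing $w(\beta)\in\Phi^-$ and hence $\beta\in\Phi_{w^{-1}}$.

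Now set $w' := w g_1^{-1}$ and $\Pi_{K_1} := g_1(\Pi_K)\subseteq\Pi_{K\cup\{s_1\}}$. Since $w'(\Pi_{K_1}) = w(\Pi_K)=\Pi_L$, the triple $(\Pi_L, w', \Pi_{K_1})$ is a morphism of $G(\Pi_J)$. To unlock the induction I need $\ell(w) = \ell(w') + \ell(g_1)$. Tracing through the defining properties of $\nu$ in Proposition~\ref{generatorexistence}, one checks that $\Phi_{g_1^{-1}} = \Phi^+_{K\cup\{s_1\}}\setminus\Phi^+_K$, and by the previous paragraph this sits inside $\Phi_{w^{-1}}$. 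Applying Proposition~\ref{wroandlset} to the inverses yields $g_1^{-1}\wro w^{-1}$, which is equivalent to length additivity $\ell(w) = \ell(wg_1^{-1})+\ell(g_1)$. Hence $\ell(w')<\ell(w)$, and the inductive hypothesis applied to $w'$ produces a decomposition $w' = g_n\cdots g_2$ with compatible edges and lengths summing correctly, yielding the required $w = g_n\cdots g_2 g_1$. I expect the main obstacle to be the linear-algebra argument in the second paragraph; it is precisely where the groupoid-morphism hypothesis $w(\Pi_K)=\Pi_L$ is used in an essential way, and without it the existence of $\nu(\alpha_1, \Pi_K)$ could fail.
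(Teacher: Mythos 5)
Your proposal is correct, but it is not comparable to an argument in the paper itself: the paper does not prove Theorem \ref{decomposingmorphisms} at all, it simply cites Proposition 2.3 of \cite{brink1999normalizers}. What you have written is, in effect, the omitted proof, and it follows the same inductive strategy as Brink--Howlett's original argument: peel off one generator $\nu(\alpha_1,\Pi_K)$ at a time, with the prescribed choice of $\alpha_1$ giving the ``furthermore'' clause for free. The two places where your write-up compresses genuine content are both fine on inspection. First, the finiteness of $\Phi^{+}_{K\cup\{s_1\}}\sm\Phi^{+}_{K}$ via the inclusion into $\Phi_{w^{-1}}$ is exactly the right mechanism, and your linear-algebra step is sound: $\beta\notin\Phi^{+}_{K}$ forces a strictly positive coefficient on $\alpha_1$ (using $\Phi_{K}=\Phi\cap\Span(\Pi_{K})$ and linear independence of $\Pi$), and the existence of $\delta_0\notin\Pi_L$ with $d_{\delta_0}>0$ follows since otherwise $\alpha_1\in w^{-1}(\Span(\Pi_L))=\Span(\Pi_K)$. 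Second, the claim $\Phi_{g_1^{-1}}=\Phi^{+}_{K\cup\{s_1\}}\sm\Phi^{+}_{K}$ needs, beyond the two defining properties of $\nu$ in Proposition \ref{generatorexistence}, the standard parabolic fact that $\Phi_{u^{-1}}\seq\Phi^{+}_{I}$ for $u\in W_{I}$; this is available from Proposition \ref{leftinversionset} applied to a reduced word of $g_1$ in $W_{K\cup\{s_1\}}$ (using $\ell_{I}=\ell$ on $W_{I}$), so there is no gap, though it deserved a sentence. You should also note explicitly that $\alpha_1\in\Phi^{+}_{K\cup\{s_1\}}\sm\Phi^{+}_{K}$, so $\ell(g_1)\geq 1$ and the induction parameter strictly decreases; without this the recursion could stall. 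With those two sentences added, your argument is a complete, self-contained proof using only Propositions \ref{generalizedexchange}, \ref{leftinversionset}, \ref{wroandlset}, and \ref{generatorexistence}, which is precisely what a reader of this paper would want in place of the bare citation.
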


\begin{proof}
    See Proposition 2.3 of \cite{brink1999normalizers}.
\end{proof}

\section{The equation $ w(\Phi_{x}) = \Phi_{y}$} \label{theequation}

In this section, we establish some basic facts regarding the equation $ w(\Phi_{x}) = \Phi_{y}$. These facts will later be used to prove properties about Coxeter squares and Coxeter $n$-cubes in the next section.

\begin{prop} \label{lengthsadd}
    We have 
    
    $$w(\Phi_{x}) \subseteq \Phi^{+} \iff \ell(wx) = \ell(w) + \ell(x)$$ 
    In particular, $ w(\Phi_{x}) = \Phi_{y}$ implies $\ell(wx) = \ell(w) + \ell(x)$.
\end{prop}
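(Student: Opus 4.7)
The plan is to establish the equivalence by showing that, under the hypothesis $w(\Phi_x)\subseteq\Phi^{+}$, the left inversion set of $wx$ decomposes as the disjoint union $\Phi_{wx}=\Phi_{w}\sqcup w(\Phi_{x})$, and conversely to exploit reduced expressions together with Proposition \ref{leftinversionset}. The final ``in particular'' claim will then be immediate because $\Phi_{y}\subseteq\Phi^{+}$ by definition, so $w(\Phi_{x})=\Phi_{y}$ forces $w(\Phi_{x})\subseteq\Phi^{+}$.

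For the forward direction ($\Rightarrow$), assume $w(\Phi_{x})\subseteq\Phi^{+}$. First I would observe $\Phi_{w}\cap w(\Phi_{x})=\emptyset$: if some $\beta$ lay in the intersection, then $w^{-1}\beta$ would simultaneously be a negative root (by $\beta\in\Phi_{w}$) and an element of $\Phi_{x}\subseteq\Phi^{+}$. Next, $w(\Phi_{x})\subseteq\Phi_{wx}$ is a direct computation: if $\beta=w\gamma$ with $\gamma\in\Phi_{x}$, then $\beta\in\Phi^{+}$ and $(wx)^{-1}\beta=x^{-1}\gamma\in\Phi^{-}$. The inclusion $\Phi_{w}\subseteq\Phi_{wx}$ takes more care: for $\beta\in\Phi_{w}$, set $\delta:=-w^{-1}\beta\in\Phi^{+}$; then $(wx)^{-1}\beta=-x^{-1}\delta$, so I need $\delta\notin\Phi_{x}$. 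If instead $\delta\in\Phi_{x}$, then $w\delta=-\beta\in\Phi^{-}$ would contradict the hypothesis $w(\Phi_{x})\subseteq\Phi^{+}$. Combining, $\Phi_{w}\sqcup w(\Phi_{x})\subseteq\Phi_{wx}$, so by Proposition \ref{lengthandlset},
\[
\ell(w)+\ell(x)=|\Phi_{w}|+|w(\Phi_{x})|\leq|\Phi_{wx}|=\ell(wx).
\]
Combined with the standard inequality $\ell(wx)\leq\ell(w)+\ell(x)$, equality holds.

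For the converse ($\Leftarrow$), assume $\ell(wx)=\ell(w)+\ell(x)$, and take reduced expressions $w=t_{1}\cdots t_{m}$ and $x=s_{1}\cdots s_{n}$. Concatenating yields an expression $wx=t_{1}\cdots t_{m}s_{1}\cdots s_{n}$ of length $m+n=\ell(wx)$, hence reduced. Applying Proposition \ref{leftinversionset} to this reduced expression identifies $\Phi_{wx}$ as a set of $m+n$ positive roots whose last $n$ entries are exactly $w(\beta_{i})$ for $\beta_{i}:=s_{1}\cdots s_{i-1}(\alpha_{s_{i}})$. Since $\Phi_{x}=\{\beta_{1},\dots,\beta_{n}\}$ by Proposition \ref{leftinversionset} applied to $x$, all of $w(\Phi_{x})$ lies in $\Phi_{wx}\subseteq\Phi^{+}$, as desired.

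The main potential obstacle is the inclusion $\Phi_{w}\subseteq\Phi_{wx}$ in the forward direction, since it requires transporting the hypothesis on $\Phi_{x}$ across a sign flip; the rest reduces to bookkeeping with the definitions. No results beyond Propositions \ref{lengthandlset} and \ref{leftinversionset} and the standard subadditivity of $\ell$ are needed.
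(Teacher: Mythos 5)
Your proof is correct, but it takes a genuinely different route from the paper's. The paper's proof is a two-line reduction to an external result: it observes that $w(\Phi_{x}) \subseteq \Phi^{+}$ holds if and only if $\Phi_{w^{-1}} \cap \Phi_{x} = \emptyset$, and then cites Lemma 3.2 of Brink--Howlett, which states that this disjointness is equivalent to $x$ being a right divisor of $wx$, i.e.\ to $\ell(wx) = \ell(w) + \ell(x)$. You instead prove everything from scratch: in the forward direction you establish the disjoint inclusion $\Phi_{w} \sqcup w(\Phi_{x}) \subseteq \Phi_{wx}$ (the step $\Phi_{w} \subseteq \Phi_{wx}$, via $\delta = -w^{-1}\beta$, is exactly where the hypothesis enters, and you handle it correctly), then count cardinalities using Proposition \ref{lengthandlset} together with subadditivity of $\ell$; in the converse you concatenate reduced expressions and read off $w(\Phi_{x}) \subseteq \Phi_{wx} \subseteq \Phi^{+}$ from Proposition \ref{leftinversionset}. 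What the paper's approach buys is brevity, at the cost of an external citation (and a notational translation between $N(w^{-1})$ and $\Phi_{w}$, which the paper has to flag). What your approach buys is self-containedness --- it needs only results already proved in the paper --- and it implicitly yields the stronger fact that $\Phi_{wx} = \Phi_{w} \coprod w(\Phi_{x})$ whenever lengths add (your inclusion is forced to be an equality by the cardinality count), which is the root-system analogue of the cocycle identity of Proposition \ref{reflectioncocycle} and is essentially the content of the Brink--Howlett lemma the paper invokes.
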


\begin{proof}
    Note that $w(\Phi_{x}) \seq \Phi^{+} $ if and only if $ \Phi_{w^{-1}} \cap \Phi_{x} = \emptyset$. But by Lemma 3.2 of \cite{brink1999normalizers}, $ \Phi_{w^{-1}} \cap \Phi_{x} = \emptyset$ if and only if $ x$ is a right divisor of $ wx$, meaning that $ \ell(wx) = \ell(w) + \ell(x)$. (In \cite{brink1999normalizers}, their notation differs from the notation that we use. In \cite{brink1999normalizers}, they use $ N(w^{-1})$ to mean $ \Phi_{w}$).
\end{proof}

\begin{prop} \label{equationandcocycle}
    We have

    $$ w(\Phi_{x}) = \Phi_{y} \implies wN(x)w^{-1} = N(y)$$
    If $ \ell(wx) = \ell(w) + \ell(x)$, then:

    $$ wN(x)w^{-1} = N(y) \implies w(\Phi_{x}) = \Phi_{y}$$
\end{prop}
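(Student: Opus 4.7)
The plan is to pass back and forth through the bijection $t \mapsto \alpha_{t}$ from $N(w)$ to $\Phi_{w}$ given by Proposition \ref{equivalenceofwro}(1). The key technical input driving both directions is the observation that conjugation of reflections by $w$ corresponds to the $W$-action on roots: for any $t \in T$ and $w \in W$, the element $wtw^{-1}$ is the reflection whose root is $w(\alpha_{t})$, so its associated positive root is $w(\alpha_{t})$ if $w(\alpha_{t}) \in \Phi^{+}$ and is $-w(\alpha_{t})$ otherwise. This is a direct consequence of $W$-invariance of the bilinear form $B$ applied to the reflection formula $t(v) = v - 2B(v,\alpha_{t})\alpha_{t}$; alternatively it is a standard fact from section 5.7 of \cite{humphreys1992reflection}.

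For the forward implication, I would start by noting that $w(\Phi_{x}) = \Phi_{y}$ forces $w(\Phi_{x}) \subseteq \Phi^{+}$, and hence $\ell(wx) = \ell(w) + \ell(x)$ by Proposition \ref{lengthsadd}. Now pick $t \in N(x)$, so $\alpha_{t} \in \Phi_{x}$. Then $w(\alpha_{t}) \in \Phi^{+}$, and by the technical fact above $wtw^{-1}$ is the reflection with positive root $w(\alpha_{t})$. Since $w(\alpha_{t}) \in w(\Phi_{x}) = \Phi_{y}$, the bijection of Proposition \ref{equivalenceofwro}(1) identifies $wtw^{-1}$ as an element of $N(y)$. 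This gives the inclusion $wN(x)w^{-1} \subseteq N(y)$, and the reverse inclusion follows from comparing cardinalities: $|wN(x)w^{-1}| = |N(x)| = \ell(x) = |\Phi_{x}| = |\Phi_{y}| = \ell(y) = |N(y)|$.

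For the converse, the additional hypothesis $\ell(wx) = \ell(w) + \ell(x)$ tells us by Proposition \ref{lengthsadd} that $w(\Phi_{x}) \subseteq \Phi^{+}$. Fix $t \in N(x)$, so that $w(\alpha_{t})$ is a positive root. Again by the technical fact, $wtw^{-1}$ is the reflection whose positive root is $w(\alpha_{t})$. The hypothesis $wN(x)w^{-1} = N(y)$ gives $wtw^{-1} \in N(y)$, so the bijection of Proposition \ref{equivalenceofwro}(1) yields $w(\alpha_{t}) = \alpha_{wtw^{-1}} \in \Phi_{y}$. Letting $t$ range over $N(x)$ gives $w(\Phi_{x}) \subseteq \Phi_{y}$, and equality again follows from the cardinality bookkeeping.

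The main obstacle is not conceptual but notational: one must carefully verify that $wtw^{-1}$ really is the reflection with root $w(\alpha_{t})$ (so that the conjugation action on $N$ matches the linear action on $\Phi$ under the bijection $t \mapsto \alpha_{t}$), and then track signs to ensure the positive root is $w(\alpha_{t})$ rather than $-w(\alpha_{t})$ — which is exactly what the length-additivity hypothesis secures in both directions.
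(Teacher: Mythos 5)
Your proof is correct and follows essentially the same route as the paper's: both hinge on the conjugation formula $wt_{\alpha}w^{-1} = t_{w(\alpha)}$, the bijection $N(\cdot) \leftrightarrow \Phi_{(\cdot)}$ of Proposition \ref{equivalenceofwro}(1), and Proposition \ref{lengthsadd} to pin down signs via length-additivity. The only difference is organizational — you argue one containment and finish by counting, while the paper matches up all elements at once and resolves the $\pm$ ambiguity at the end — which is a stylistic rather than substantive distinction.
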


\begin{proof}
    The implication $w(\Phi_{x}) = \Phi_{y} \implies w N(x) w^{-1} = N(y)$ follows almost immediately from Proposition \ref{generalizedexchange} and also from the conjugation formula $$wt_{\alpha}w^{-1} = t_{w(\alpha)} \textrm{ where } \alpha \in \Phi$$
    For the reverse direction, suppose $w N(x) w^{-1} = N(y)$. Let 
    
    $$ \Phi_{x} = \{ \alpha_{1}, \dots , \alpha_{n} \}$$
    and
    $$ \Phi_{y} = \{ \beta_{1}, \dots ,\beta_{n} \} $$
    Then we have
    
    $$ N(x) = \{t_{\alpha_{1}}, \dots , t_{\alpha_{n}} \}  $$
    and
    $$ N(y) = \{t_{\beta_{1}}, \dots , t_{\beta_{n}}  \} $$
    Thus,
    
    $$ w N(x)w^{-1} = \{wt_{\alpha_{1}}w^{-1}, \dots , wt_{\alpha_{n}}w^{-1} \}  $$
    apply the conjugation formula to get:
    
    $$ w N(x)w^{-1} = \{t_{w(\alpha_{1})}, \dots , t_{w(\alpha_{n})} \}  $$
    Thus, we get:
    
    $$ \{t_{w(\alpha_{1})}, \dots , t_{w(\alpha_{n})} \} = \{t_{\beta_{1}}, \dots , t_{\beta_{n}}  \} $$
    Let us assume without loss of generality that $t_{w(\alpha_{i})} = t_{\beta_{i}}$ for each $i$. This implies that $w(\alpha_{i}) = \pm \beta_{i}$ for each $i$. Note that $\alpha_{i}$ and $\beta_{i}$ are elements of $\Phi^{+}$ for each $i$. Since $l(wx) = l(w) + l(x)$, we can apply Proposition \ref{lengthsadd} to conclude that $w(\alpha_{i}) \in \Phi^{+}$ for each $i$. But this then implies that $w(\alpha_{i}) = \beta_{i}$ for each $i$. Hence, $w(\Phi_{x}) = \Phi_{y}$.
\end{proof}

\begin{prop} \label{flippingthesquare}
    Let $w,x,y,z \in W$. Suppose further that $yz = wx $. Then the following conditions are equivalent:
    
    \begin{enumerate}
        \item $w(\Phi_{x}) = \Phi_{y}$
        
        \item $y(\Phi_{z}) = \Phi_{w}$
        
    \end{enumerate}
\end{prop}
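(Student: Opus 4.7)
The plan is to convert the left inversion set equation in (1) into a statement about reflection cocycles via Proposition \ref{equationandcocycle}, use the cocycle formula of Proposition \ref{reflectioncocycle} to rewrite $N(wx) = N(yz)$ in two different ways, and then extract (2) by a set-theoretic cancellation. Observe that the hypothesis $yz = wx$ is invariant under simultaneously swapping $(w,x)$ with $(y,z)$, and this swap interchanges (1) and (2); therefore it suffices to prove the implication (1) $\Rightarrow$ (2).

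Assume (1). By Proposition \ref{lengthsadd}, $\ell(wx) = \ell(w) + \ell(x)$, and by Proposition \ref{equationandcocycle}, $wN(x)w^{-1} = N(y)$. Applying Proposition \ref{reflectioncocycle} to $wx$ gives
$$N(wx) = N(w) \coprod wN(x)w^{-1} = N(w) \coprod N(y).$$
On the other hand, Proposition \ref{reflectioncocycle} applied to $yz$ yields
$$N(yz) = N(y) \, \Delta \, yN(z)y^{-1}.$$
Since $wx = yz$, these two expressions are equal. Writing $A := N(y)$ and $B := yN(z)y^{-1}$, the equation $A \, \Delta \, B = N(w) \coprod A$ forces $A \cap B = \emptyset$: any element of $A \cap B$ would lie in $A$, hence in $N(w) \coprod A$, but would be excluded from $A \, \Delta \, B$. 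Consequently $A \, \Delta \, B = A \coprod B$, and cancelling $A$ yields $B = N(w)$, i.e., $yN(z)y^{-1} = N(w)$. The disjointness $N(y) \cap yN(z)y^{-1} = \emptyset$ combined with Proposition \ref{reflectioncocycle} then gives $\ell(yz) = \ell(y) + \ell(z)$, and the reverse implication of Proposition \ref{equationandcocycle} delivers $y(\Phi_z) = \Phi_w$, which is (2).

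The main obstacle, though minor, is the set-theoretic cancellation: one must justify that the equation $A \, \Delta \, B = N(w) \coprod A$ genuinely forces both $A \cap B = \emptyset$ and $B = N(w)$, so that the symmetric difference collapses to a disjoint union and cancellation is legal. Once this is observed, everything else is a direct application of the propositions already established in this section, and the symmetry of the hypothesis eliminates the need to repeat the argument for (2) $\Rightarrow$ (1).
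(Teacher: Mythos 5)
Your proof is correct and follows essentially the same route as the paper: reduce to $(1) \Rightarrow (2)$ by the symmetry of the hypothesis, convert (1) into the cocycle identity $wN(x)w^{-1} = N(y)$ via Propositions \ref{lengthsadd} and \ref{equationandcocycle}, equate the two expansions of $N(wx) = N(yz)$ from Proposition \ref{reflectioncocycle}, and cancel to obtain $yN(z)y^{-1} = N(w)$ together with $\ell(yz) = \ell(y) + \ell(z)$. The only cosmetic difference is in the cancellation step: the paper cancels $N(y)$ directly using the group structure of $(\mathcal{P}(T), \Delta)$, whereas you first deduce the disjointness $N(y) \cap yN(z)y^{-1} = \emptyset$ set-theoretically and then cancel disjoint unions; both are valid.
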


\begin{remark}
    This was first proven as Lemma 12.3 of \cite{dyer2011groupoids2} (see both \cite{dyer2011groupoids1} and \cite{dyer2011groupoids2}). It was also proven as Lemma 3.4 part (a) in \cite{dyer2019characterization}. However, the proof that is given below is slightly different than the one given in \cite{dyer2019characterization}.
\end{remark}

\begin{proof}
    Suppose $w(\Phi_{x}) = \Phi_{y}$. Since $ yz = wx$, we have $ N(yz) = N(wx) $. Proposition \ref{reflectioncocycle} allows us to deduce:
    
    $$ N(y) \Delta yN(z)y^{-1} = N(w) \Delta wN(x)w^{-1} $$
    By Proposition \ref{equationandcocycle}, we have $ w N(x) w^{-1} = N(y) $. Thus, $ yN(z)y^{-1} = N(w)$. Because of $ w(\Phi_{x}) = \Phi_{y}$, Proposition \ref{lengthsadd} implies that $ \ell(wx) = \ell(w) + \ell(x)$. But since $ \ell(wx) = \ell(w) + \ell(x)$, Proposition \ref{reflectioncocycle} implies that $ N(w)$ and $ wN(x)w^{-1}$ are disjoint. Hence, $ yN(z)y^{-1}$ and $ N(y)$ are disjoint. Thus, by Proposition \ref{reflectioncocycle}, we deduce $ \ell(yz) = \ell(y) + \ell(z)$. Both $ \ell(yz) = \ell(y) + \ell(z)$ and $yN(z)y^{-1} = N(w) $ imply that $ y(\Phi_{z}) = \Phi_{w}$ by Proposition \ref{equationandcocycle}. This proves that (1) implies (2).

For the reverse direction, note that there is a symmetry of the assumption $yz = wx$ and the two conditions $ w(\Phi_{x}) = \Phi_{y}$ and $y(\Phi_{z}) = \Phi_{w}$. To be specific, if we interchange the roles of $ w$ and $y$ and the roles of $z$ and $x$, we get the reverse implication. This proves the equivalence.
\end{proof}

\begin{prop}
    Let $ W$ be a finite Coxeter group. Let $ w_{0}$ denote the unique longest element of $ W$. Let $ x,y \in W$ such that $ \ell(x) = \ell(w_{0}) -1$. If there exists a $ w\in W \sm \{ 1 \} $ such that $ w(\Phi_{x}) = \Phi_{y}$, then $ x = y$.
\end{prop}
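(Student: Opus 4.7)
The plan is to squeeze $w$ using the length identity from Proposition \ref{lengthsadd} and then do an explicit computation of $\Phi_x$ and $\Phi_y$, which is feasible because $x$ is forced to be of a very restricted form.

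First I would observe that since $w(\Phi_{x}) = \Phi_{y}$ is induced by a bijection, $|\Phi_{x}| = |\Phi_{y}|$, so by Proposition \ref{lengthandlset} we have $\ell(y) = \ell(x) = \ell(w_{0}) - 1$. Next, Proposition \ref{lengthsadd} gives $\ell(wx) = \ell(w) + \ell(x)$. Since $\ell(wx) \leq \ell(w_{0})$ (as $w_{0}$ is the unique longest element) and $\ell(x) = \ell(w_{0}) - 1$, we obtain $\ell(w) \leq 1$. Combined with $w \neq 1$, this forces $\ell(w) = 1$, so $w = s$ for some $s \in S$, and moreover $\ell(sx) = \ell(w_{0})$, so $sx = w_{0}$, i.e.\ $x = sw_{0}$.

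Next I would compute $\Phi_{x}$ explicitly. For $\beta \in \Phi^{+}$, we have $\beta \in \Phi_{x}$ iff $x^{-1}(\beta) = w_{0}^{-1} s(\beta) \in \Phi^{-}$. Since $w_{0}$ interchanges $\Phi^{+}$ and $\Phi^{-}$, this is equivalent to $s(\beta) \in \Phi^{+}$. Because $s$ sends $\alpha_{s}$ to $-\alpha_{s}$ and permutes $\Phi^{+} \setminus \{\alpha_{s}\}$, this gives
\[
\Phi_{x} = \Phi^{+} \setminus \{\alpha_{s}\}.
\]
Applying $w = s$ to both sides, and again using that $s$ permutes $\Phi^{+} \setminus \{\alpha_{s}\}$, we get
\[
\Phi_{y} = s(\Phi_{x}) = s\bigl(\Phi^{+} \setminus \{\alpha_{s}\}\bigr) = \Phi^{+} \setminus \{\alpha_{s}\} = \Phi_{x}.
\]

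Finally, since $\Phi_{x} = \Phi_{y}$, Proposition \ref{wroandlset} yields $x \wro y$ and $y \wro x$, so $x = y$ (the weak right order is a partial order; alternatively, $\Phi_{x} \subseteq \Phi_{y}$ together with $\ell(x) = \ell(y)$ forces equality in the weak right order chain). I do not expect a serious obstacle here; the only substantive step is recognizing that the length inequality forces $w$ to be a simple reflection, after which the computation of $\Phi_{sw_{0}}$ is direct.
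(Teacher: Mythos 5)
Your proposal is correct. The first half coincides exactly with the paper's proof: both use Proposition \ref{lengthsadd} together with $\ell(x) = \ell(w_{0})-1$ and $w \neq 1$ to force $\ell(w)=1$, hence $w = s \in S$ and $sx = w_{0}$. The two arguments then part ways. The paper exploits the symmetry of the hypothesis: from $s(\Phi_{x}) = \Phi_{y}$ it deduces $s(\Phi_{y}) = \Phi_{x}$, notes $\ell(y) = \ell(x) = \ell(w_{0})-1$, and reapplies the same length argument with the roles of $x$ and $y$ interchanged to get $sy = w_{0}$, whence $x = s w_{0} = y$; this stays entirely within the propositions established in the paper. You instead compute the inversion set explicitly, $\Phi_{x} = \Phi_{sw_{0}} = \Phi^{+} \setminus \{\alpha_{s}\}$, and then use the classical fact that $s$ permutes $\Phi^{+} \setminus \{\alpha_{s}\}$ to conclude $\Phi_{y} = s(\Phi_{x}) = \Phi_{x}$, hence $x = y$ by antisymmetry of the weak order. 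That permutation fact is not stated anywhere in the paper, but it is standard (Proposition 5.6(a) of Humphreys, which the paper cites for adjacent facts), so this is a legitimate dependency. Your route buys more explicit information --- it identifies $x = y = sw_{0}$ and its inversion set concretely --- at the cost of importing one external fact; the paper's symmetry trick is slightly slicker and self-contained. One small remark: your opening observation that $\ell(y) = \ell(x) = \ell(w_{0})-1$ is never actually used in your argument (it is essential in the paper's version, where it licenses the interchange of $x$ and $y$), so you could delete it without loss.
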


\begin{proof}
    Let $ w ,x ,y$ satisfy the hypotheses of the proposition. Since $ w(\Phi_{x}) = \Phi_{y}$, we have $ \ell(wx) = \ell(w) + \ell(x)$ by Proposition \ref{lengthsadd}. Since $ w \neq 1$, $\ell(w) > 0$. Since $ \ell(x) = \ell(w_{0}) - 1$ where $ w_{0}$ is the longest element, we deduce $ \ell(w) = 1$. Thus, $ w = s $ for some $ s\in S$ such that $ sx = w_{0}$. The equation $ w(\Phi_{x}) = \Phi_{y}$ becomes:

    $$ s(\Phi_{x}) = \Phi_{y}$$
    Since $s$ is an involution, we can move $s$ over to the other side: $ \Phi_{x} = s(\Phi_{y})$. Also, note that $ \ell(y) = \ell(x) = \ell(w_{0}) -1$. We can now interchange the roles of $ x$ and $ y$ to conclude that $ s$ is the unique element of $ S$ such that $ sy = w_{0}$. Since $ sx = w_{0}  = sy$, we conclude that $ x = y$.
\end{proof}

\begin{prop}
    Let $ W$ be a finite Coxeter group with longest element $w_{0}$. Suppose that 
    
    $$ w(\Phi_{x}) = \Phi_{y}   $$
    then $w$ is a right divisor of $ w_{0}x^{-1}$.
\end{prop}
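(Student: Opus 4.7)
The plan is to use Proposition \ref{lengthsadd} to convert the hypothesis $w(\Phi_x) = \Phi_y$ into the length-additivity statement $\ell(wx) = \ell(w) + \ell(x)$, and then use standard properties of the longest element $w_0$ in a finite Coxeter group to verify that $w_0 x^{-1} = (w_0 x^{-1} w^{-1}) \cdot w$ is a length-additive factorization. That factorization is precisely what it means for $w$ to be a right divisor of $w_0 x^{-1}$.

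First I would record the basic identity that, for any $v \in W$ with $W$ finite, one has $\ell(w_0 v) = \ell(w_0) - \ell(v)$, which is a standard consequence of $w_0$ being the unique element of maximal length (every simple reflection is a left descent of $w_0 v^{-1}$ times appropriate counts; explicitly, $\Phi_{w_0} = \Phi^{+}$, so $\Phi_{w_0 v} = \Phi^{+} \setminus v^{-1}(\Phi^{-} \cap v(\Phi^{+}))$, and Proposition \ref{lengthandlset} gives the count). Applying this both to $v = x^{-1}$ and to $v = (wx)^{-1}$, I obtain
\begin{equation*}
    \ell(w_0 x^{-1}) = \ell(w_0) - \ell(x), \qquad \ell\bigl(w_0 x^{-1} w^{-1}\bigr) = \ell\bigl(w_0 (wx)^{-1}\bigr) = \ell(w_0) - \ell(wx).
\end{equation*}

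Next I would substitute $\ell(wx) = \ell(w) + \ell(x)$, which was provided by Proposition \ref{lengthsadd} applied to the hypothesis $w(\Phi_x) = \Phi_y$. Subtracting the two displayed equations gives
\begin{equation*}
    \ell(w_0 x^{-1}) - \ell\bigl(w_0 x^{-1} w^{-1}\bigr) = \ell(wx) - \ell(x) = \ell(w),
\end{equation*}
so that $\ell(w_0 x^{-1}) = \ell(w_0 x^{-1} w^{-1}) + \ell(w)$. Since we also have the group-theoretic identity $w_0 x^{-1} = (w_0 x^{-1} w^{-1}) w$, this exhibits $w$ as a right divisor of $w_0 x^{-1}$, as desired.

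The main potential obstacle is simply verifying the identity $\ell(w_0 v) = \ell(w_0) - \ell(v)$; if one does not wish to cite it directly from \cite{humphreys1992reflection} or \cite{bjorner2005combinatorics}, it can be proved in one line from Proposition \ref{lengthandlset} together with the observation that $w_0(\Phi^{+}) = \Phi^{-}$, which forces the left inversion sets of $w_0 v$ and $v$ to be complementary in $\Phi^{+}$. Once that fact is in hand, the remainder of the argument is a short arithmetic manipulation, and no additional structural input about the equation $w(\Phi_x) = \Phi_y$ is needed beyond the length-additivity already extracted in Proposition \ref{lengthsadd}.
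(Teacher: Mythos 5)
Your main argument is correct, and it is essentially the paper's own proof in streamlined form: both reduce the hypothesis to $\ell(wx)=\ell(w)+\ell(x)$ via Proposition \ref{lengthsadd} and then exploit the length-complementation property of $w_{0}$. The only real difference is how the complementary factor is produced. The paper constructs $g$ by multiplying $wx$ on the left by simple reflections until the length reaches $\ell(w_{0})$, then uses uniqueness of the longest element to conclude $g(wx)=w_{0}$ and $\ell(w_{0})=\ell(g)+\ell(wx)$; you instead write the factor down explicitly as $w_{0}x^{-1}w^{-1}=w_{0}(wx)^{-1}$ and compute its length directly from the identity $\ell(w_{0}v)=\ell(w_{0})-\ell(v)$. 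Your packaging avoids the auxiliary construction, but the facts invoked are the same.

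The one step you must repair is your sketched self-contained justification of $\ell(w_{0}v)=\ell(w_{0})-\ell(v)$: both the formula $\Phi_{w_{0}v}=\Phi^{+}\setminus v^{-1}(\Phi^{-}\cap v(\Phi^{+}))$ and the claim that the left inversion sets of $w_{0}v$ and $v$ are complementary in $\Phi^{+}$ are false. Left multiplication by $w_{0}$ does not complement left inversion sets: in $A_{2}$ with $v=s_{1}$ one has $w_{0}v=s_{1}s_{2}$ and $\Phi_{s_{1}s_{2}}=\{\alpha_{1},\alpha_{1}+\alpha_{2}\}$, whereas $\Phi^{+}\setminus\Phi_{s_{1}}=\{\alpha_{2},\alpha_{1}+\alpha_{2}\}$. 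The correct set-level statement is for right multiplication: $\Phi_{vw_{0}}=\Phi^{+}\setminus\Phi_{v}$, since $(vw_{0})^{-1}\beta=w_{0}v^{-1}\beta$ is negative exactly when $v^{-1}\beta$ is positive; in general one only has $\Phi_{w_{0}v}=-w_{0}(\Phi^{+}\setminus\Phi_{v})$, so the two sets have complementary cardinalities without being complementary as sets. The fix is one line: $\ell(w_{0}u)=\ell(u^{-1}w_{0})=\ell(w_{0})-\ell(u^{-1})=\ell(w_{0})-\ell(u)$, using $\ell(z)=\ell(z^{-1})$, Proposition \ref{lengthandlset}, and the right-handed complementation -- or simply cite the identity from \cite{humphreys1992reflection} or \cite{bjorner2005combinatorics}, as you propose. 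With that repair, the rest of your computation goes through verbatim.
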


\begin{proof}
    The condition $w(\Phi_{x}) = \Phi_{y}$ implies that $\ell(wx) = \ell(w) + \ell(x)$. Let $g\in W$ such that $ \ell(w_{0}) = \ell(g(wx))$. Such a $g$ exists since we can always increase the length of an element by continually multiplying on the left by simple reflections until we reach the length of the longest element (page 16 of \cite{humphreys1992reflection}). Note that since $ \ell(w_{0}) = \ell(g(wx))$, and because the longest element of a Coxeter group is unique if it exists, we must have that $ w_{0} = g(wx)$. Note that both:
    
    $$ \ell(w_{0}) = \ell(g(wx)) = \ell(g) + \ell(wx) = \ell(g) + \ell(w) + \ell(x) $$
    
    $$ \ell(w_{0}) = \ell((w_{0}x^{-1})x) = \ell(w_{0}x^{-1}) + \ell(x) $$
    Hence, we deduce that $ \ell(w_{0}x^{-1}) = \ell(g) + \ell(w)$. Furthermore, $ (w_{0}x^{-1})x = w_{0} = g(wx)$, which implies $ w_{0}x^{-1} = gw$. Both $ w_{0}x^{-1} = gw$ and $\ell(w_{0}x^{-1}) = \ell(g) + \ell(w)$ imply that $w$ is a right divisor of $ w_{0}x^{-1}$.
\end{proof}

\begin{prop}
    Let $ W$ be a finite Coxeter group with longest element $w_{0}$. Then:
    
    $$ (w_{0}x^{-1})(\Phi_{x}) = \Phi_{w_{0}x^{-1}w_{0}} $$
    Furthermore, if $ w(\Phi_{x}) = \Phi_{y}$, then $(w_{0}y^{-1}wxw_{0}) (\Phi_{w_{0}x^{-1}w_{0}}) = \Phi_{w_{0}y^{-1}w_{0}}$.
\end{prop}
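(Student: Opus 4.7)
The plan is to prove the first equation by combining a standard ``cocycle'' identity for left inversion sets with a direct computation showing that right-multiplication by $w_{0}$ complements $\Phi_{z}$ inside $\Phi^{+}$. The second equation will then reduce algebraically to the first applied to both $x$ and $y$.

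For the first equality $(w_{0}x^{-1})(\Phi_{x}) = \Phi_{w_{0}x^{-1}w_{0}}$, I would set $w := w_{0}x^{-1}$ and observe that $\ell(w) + \ell(x) = \ell(w_{0})$, since the longest element has the form $w_{0} = wx$ uniquely of this length. By Proposition~\ref{lengthsadd}, $w(\Phi_{x}) \subseteq \Phi^{+}$. Then I would invoke the additive form of Proposition~\ref{reflectioncocycle}, transported across the bijection $t \leftrightarrow \alpha_{t}$ of Proposition~\ref{equivalenceofwro}, to conclude that
$$ \Phi_{w_{0}} = \Phi_{w_{0}x^{-1}} \ \coprod \ (w_{0}x^{-1})(\Phi_{x}). $$
Since $\Phi_{w_{0}} = \Phi^{+}$, this rearranges to $(w_{0}x^{-1})(\Phi_{x}) = \Phi^{+} \setminus \Phi_{w_{0}x^{-1}}$.

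Next I would verify directly from the definition of left inversion sets that, for any $z \in W$,
$$ \Phi_{zw_{0}} = \Phi^{+} \setminus \Phi_{z}, $$
using that $\beta \in \Phi_{zw_{0}}$ iff $w_{0}z^{-1}(\beta) \in \Phi^{-}$ iff $z^{-1}(\beta) \in \Phi^{+}$ (because $w_{0}$ sends $\Phi^{+}$ bijectively to $\Phi^{-}$). Applying this to $z = w_{0}x^{-1}$ gives $\Phi_{w_{0}x^{-1}w_{0}} = \Phi^{+} \setminus \Phi_{w_{0}x^{-1}}$, which matches the previous expression for $(w_{0}x^{-1})(\Phi_{x})$. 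This finishes the first statement.

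For the second statement, assume $w(\Phi_{x}) = \Phi_{y}$. By the first part, $\Phi_{w_{0}x^{-1}w_{0}} = (w_{0}x^{-1})(\Phi_{x})$, so
$$ (w_{0}y^{-1}wxw_{0})(\Phi_{w_{0}x^{-1}w_{0}}) = \bigl[(w_{0}y^{-1}wxw_{0})(w_{0}x^{-1})\bigr](\Phi_{x}). $$
The bracketed product collapses, using $w_{0}^{2} = 1$ and $xx^{-1} = 1$, to $w_{0}y^{-1}w$. Applying the hypothesis $w(\Phi_{x}) = \Phi_{y}$ and then the first part of the proposition to $y$ gives
$$ (w_{0}y^{-1}w)(\Phi_{x}) = (w_{0}y^{-1})(w(\Phi_{x})) = (w_{0}y^{-1})(\Phi_{y}) = \Phi_{w_{0}y^{-1}w_{0}}, $$
as desired. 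I expect no serious obstacles: the only potentially subtle step is the identity $\Phi_{zw_{0}} = \Phi^{+} \setminus \Phi_{z}$, but it is immediate from the definition once one uses that $w_{0}$ swaps $\Phi^{+}$ and $\Phi^{-}$.
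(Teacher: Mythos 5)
Your proof is correct, but the first half takes a genuinely different route from the paper. The paper proves $\ell(w_{0}x^{-1}w_{0}) = \ell(x)$, concludes the two finite sets $(w_{0}x^{-1})(\Phi_{x})$ and $\Phi_{w_{0}x^{-1}w_{0}}$ have equal cardinality, and then verifies the single containment $(w_{0}x^{-1})(\Phi_{x}) \subseteq \Phi_{w_{0}x^{-1}w_{0}}$ by pushing an individual root $\beta \in \Phi_{x}$ through $w_{0}x^{-1}$ and checking signs directly. You instead avoid the counting argument entirely and identify \emph{both} sides with the same complement: the disjoint-union form of Proposition \ref{reflectioncocycle}, transported through the bijection of Proposition \ref{equivalenceofwro} (with positivity of $(w_{0}x^{-1})(\Phi_{x})$ supplied by Proposition \ref{lengthsadd}), gives $(w_{0}x^{-1})(\Phi_{x}) = \Phi^{+} \setminus \Phi_{w_{0}x^{-1}}$, while your general identity $\Phi_{zw_{0}} = \Phi^{+} \setminus \Phi_{z}$ gives $\Phi_{w_{0}x^{-1}w_{0}} = \Phi^{+} \setminus \Phi_{w_{0}x^{-1}}$ as well. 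The paper's argument is more elementary (no cocycle machinery, just root sign-checking plus finiteness), whereas yours is more structural: it explains \emph{why} the equality holds by exhibiting both sets as the same complement, and the lemma $\Phi_{zw_{0}} = \Phi^{+}\setminus\Phi_{z}$ is a reusable fact in its own right; note that your transport of $wN(x)w^{-1}$ to $w(\Phi_{x})$ is the same mechanism the paper later uses in Proposition \ref{wroandsquare}. For the second statement, your algebraic collapse of $(w_{0}y^{-1}wxw_{0})(w_{0}x^{-1})$ to $w_{0}y^{-1}w$ and the application of the first part to $y$ is essentially identical to the paper's substitution argument.
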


\begin{proof}
    First, observe that 
    
    $$\ell(w_{0}x^{-1}w_{0}) = \ell(w_{0}(x^{-1}w_{0})) = \ell(w_{0}) - \ell(x^{-1}w_{0}) $$

    $$ = \ell(w_{0}) -(\ell(w_{0}) - \ell(x^{-1})) = \ell(x^{-1}) = \ell(x)$$
Thus, we get $ \ell(w_{0}x^{-1}w_{0}) = \ell(x)$, which implies that $ |\Phi_{x}| = \ell(x) = \ell(w_{0}x^{-1}w_{0}) =|\Phi_{w_{0}x^{-1}w_{0}}|$ by Proposition \ref{equivalenceofwro}.
More precisely, $ \Phi_{x}$ and $ \Phi_{w_{0}x^{-1}w_{0}}$ are finite sets of the same cardinality. Thus, $(w_{0}x^{-1})(\Phi_{x})$ and $ \Phi_{w_{0}x^{-1}w_{0}} $ are both finite sets of the same cardinality. Therefore, to show that these sets are equal, it suffice to show only one containment. I will show that $(w_{0}x^{-1})(\Phi_{x}) \subseteq \Phi_{w_{0}x^{-1}w_{0}} $, thus proving that $(w_{0}x^{-1})(\Phi_{x}) = \Phi_{w_{0}x^{-1}w_{0}}$.

Let $ \beta \in \Phi_{x}$ (note $\beta \in \Phi^{+}$). Then $x^{-1}(\beta) \in \Phi^{-} $. But then we have that $ w_{0}x^{-1}(\beta) \in \Phi^{+}$, since the longest element $w_{0}$ takes all of the negative roots into $\Phi^{+}$. Hence, $(w_{0}x^{-1})(\Phi_{x}) \subseteq \Phi^{+}$. All that is left to do is prove that applying $(w_{0}x^{-1}w_{0})^{-1} $ to $w_{0}x^{-1}(\beta)$ gives us a negative root. But this calculation is trivial to verify using the fact that $w_{0}^{-1} = w_{0}$ and that $w_{0}$ sends all positive roots into $ \Phi^{-}$. Thus, $ (w_{0}x^{-1})(\Phi_{x}) \subseteq \Phi_{w_{0}x^{-1}w_{0}} $, so we conclude that $(w_{0}x^{-1})(\Phi_{x}) = \Phi_{w_{0}x^{-1}w_{0}}$.

If $ w(\Phi_{x}) = \Phi_{y}$, then by making the substitutions $ \Phi_{x} = (xw_{0})\Phi_{w_{0}x^{-1}w_{0}}$ and $ \Phi_{y} = (yw_{0})\Phi_{w_{0}y^{-1}w_{0}}$, one quickly gets $ (w_{0}y^{-1}wxw_{0}) (\Phi_{w_{0}x^{-1}w_{0}}) = \Phi_{w_{0}y^{-1}w_{0}}$. 
\end{proof}

\section{Coxeter Squares and Coxeter $n$-Cubes} \label{squaresandcubes}

We begin this section by standardizing some notation regarding the $ A_{n}$ Coxeter system. Whenever the $A_{n}$ Coxeter system is mentioned, we let $ \{ s_{1} , s_{2} , \dots , s_{n}  \}$ denote the Coxeter generators with Coxeter diagram:
\vspace{.5cm}
\begin{center}
    \dynkin[text style/.style={scale=1}, labels={s_{1}, s_{2}, s_{n-1}, s_{n}}, scale=2, edge length=1cm ] A{}
\end{center}
\vspace{.5cm}
For background on Coxeter diagrams, see Section 2.1 of \cite{humphreys1992reflection}.

We define a \emph{Coxeter square} to be an ordered quadruple $ (w,x, y, z) \in (W \sm \{ 1 \} )^{4}$ such that $ wx = yz$ and $ w(\Phi_{x}) = \Phi_{y}$. Coxeter squares were first introduced by Dyer, Wang in Section 3 of \cite{dyer2019characterization}. Coxeter squares can be interpreted visualy as the following commutative diagram:

\[ \begin{tikzcd}
\sbullet \arrow{rr}{ w} & & \sbullet  \\
\\
\sbullet \arrow{uu}{x} \arrow{rr}[swap]{z}& &\sbullet \arrow{uu}[swap]{y}
\end{tikzcd}
\]
Proposition \ref{flippingthesquare} can be reinterpreted as follows: if $ (w,x,y,z)$ is a Coxeter square, then $ (y,z,w,x)$ is a Coxeter square. The Coxeter square $ (y,z, w,x)$ is shown below:

\[ \begin{tikzcd}
\sbullet \arrow{rr}{ y} & & \sbullet  \\
\\
\sbullet \arrow{uu}{z} \arrow{rr}[swap]{x}& & \sbullet \arrow{uu}[swap]{w}
\end{tikzcd}
\]
Note that the two statements $ wx = yz$ and $ w(\Phi_{x}) = \Phi_{y}$ are equivalent to the following two statements: $ w^{-1}y = xz^{-1}$ and $ w^{-1}(\Phi_{y}) = \Phi_{x}$. Similarly, the two statements $ yz = wx$ and $ y(\Phi_{z}) = \Phi_{w}$ are equivalent to the following two statements: $ y^{-1}w = zx^{-1}$ and $ y^{-1}(\Phi_{w}) = \Phi_{z}$. Thus, by Proposition \ref{flippingthesquare}, we have shown that if $ (w,x,y,z)$ is a Coxeter square, then $ (w^{-1} , y , x , z^{-1})$ and $ (y^{-1} ,w , z , x^{-1})$ are both Coxeter squares. The two Coxeter squares can be visualized as such:

\[ \begin{tikzcd}
\sbullet \arrow{rr}{ w^{-1}} & & \sbullet  \\
\\
\sbullet \arrow{uu}{y} \arrow{rr}[swap]{z^{-1}}& & \sbullet \arrow{uu}[swap]{x}
\end{tikzcd}
\hspace{1cm}
 \begin{tikzcd}
\sbullet \arrow{rr}{ y^{-1}} & & \sbullet  \\
\\
\sbullet \arrow{uu}{w} \arrow{rr}[swap]{x^{-1}}& &\sbullet \arrow{uu}[swap]{z}
\end{tikzcd}
\]

The above observations lead to the following proposition.

\begin{prop} \label{reorientingsquare}
    If $ (w,x,y,z)$ is a Coxeter square, then $(y,z, w,x) $, $(w^{-1} , y , x , z^{-1}) $, and $ (y^{-1} ,w , z , x^{-1})$ are Coxeter squares.
\end{prop}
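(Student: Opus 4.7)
The plan is to verify each of the three claimed orientations by combining Proposition \ref{flippingthesquare} with straightforward group-theoretic rearrangement. The key observation is that the product equation $wx = yz$ can be rewritten in multiple ways by moving inverses across, and that the root equation $w(\Phi_x) = \Phi_y$ can be inverted by simply applying $w^{-1}$ to both sides.

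First I would handle $(y,z,w,x)$: the product identity $yz = wx$ is a restatement of the hypothesis, and $y(\Phi_z) = \Phi_w$ is exactly the content of Proposition \ref{flippingthesquare}. Since $w, x, y, z$ are all non-identity by assumption, this orientation is a Coxeter square with no further work.

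Next, for $(w^{-1}, y, x, z^{-1})$, I would rearrange $wx = yz$ by left-multiplying by $w^{-1}$ and right-multiplying by $z^{-1}$ to obtain $w^{-1}y = xz^{-1}$, and I would apply $w^{-1}$ to both sides of $w(\Phi_x) = \Phi_y$ to obtain $w^{-1}(\Phi_y) = \Phi_x$. Non-identity of $w^{-1}$ and $z^{-1}$ is automatic from non-identity of $w$ and $z$. For $(y^{-1}, w, z, x^{-1})$, I would start from the already-established Coxeter square $(y, z, w, x)$ and apply the same inversion maneuver: rearrange $yz = wx$ to $y^{-1}w = zx^{-1}$ and apply $y^{-1}$ to $y(\Phi_z) = \Phi_w$ to get $y^{-1}(\Phi_w) = \Phi_z$.

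There is no genuine obstacle: the substantive content has been isolated in Proposition \ref{flippingthesquare}, and everything else is algebraic bookkeeping. The only pitfall to watch for is keeping the four entries of each quadruple in the correct positions and confirming that in each case the two defining conditions match the pattern $(a,b,c,d)$ with $ab = cd$ and $a(\Phi_b) = \Phi_c$ — a consistency check that is easily done by reading off the accompanying commutative square diagrams that the author has already displayed in the text preceding the proposition.
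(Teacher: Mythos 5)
Your proposal is correct and follows essentially the same route as the paper: the orientation $(y,z,w,x)$ is immediate from Proposition \ref{flippingthesquare}, and the orientations $(w^{-1},y,x,z^{-1})$ and $(y^{-1},w,z,x^{-1})$ are obtained exactly as in the paper's preceding discussion, by rearranging the product identity and applying $w^{-1}$ (respectively $y^{-1}$) to the inversion-set equation. The verification that each quadruple satisfies the defining pattern $ab=cd$ and $a(\Phi_b)=\Phi_c$, together with the trivial non-identity check, is all that is needed, and you have done it.
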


Given a Coxeter square, flipping a pair of parallel edges is called an \emph{edge flip}. Flipping a Coxeter square by its lower left/upper right diagonal onto its \textquotedblleft backside" is called a \emph{diagonal flip}. Performing a sequence of edge flips and diagonal flips on a Coxeter square is called a \emph{reorientation of the Coxeter square}. Proposition \ref{reorientingsquare} shows that reorienting a Coxeter square still results in a Coxeter square. Given a collection of six Coxeter squares with matching edges, we can construct a \emph{Coxeter cube}:

 \[
\begin{tikzcd}[row sep=3em, column sep = 3em]
\sbullet \arrow[rr ,"w_{1}"] \arrow[dr, swap,"w_{2}"] \arrow[dd,swap, "w_{3}"] &&
\sbullet \arrow[dd, "w_{4}" near end] \arrow[dr,"w_{5}"] \\
& \sbullet \arrow[rr , "w_{6}" near start] \arrow[dd, swap, "w_{12}" near start] &&
\sbullet \arrow[dd,"w_{7}"] \\
\sbullet \arrow[rr, "w_{8}" near end] \arrow[dr, swap, "w_9"] &&  \sbullet \arrow[dr, "w_{10}"] \\
& \sbullet \arrow[rr, "w_{11}"] && \sbullet
\end{tikzcd}
\]
We can also construct higher dimensional \emph{Coxeter $n$-cubes} by gluing together Coxeter squares with matching edges. Similar to how Coxeter squares can be reoriented by edge flips and diagoanl flips, Coxeter $n$-cubes can be reoreinted by flipping collections of parallel edges or by applying some sort of isometry of the cube that fixes the initial and terminal vertices. This is called \emph{reorienting a Coxeter $n$-cube}. For simplicity, we define a Coxeter $ 1$-cube to be a an oriented line-segment with an edge label:

\[
\begin{tikzcd}[row sep=1.5em, column sep = 1.5em]
\sbullet \arrow[rr ,"w_{1}"]  &&
\sbullet 
\end{tikzcd}
\]
Given a Coxeter system $ (W,S)$, we define $ \Cubes{n}{W,S}$ to be \emph{the number of distinct Coxeter $n$-cubes modulo reorientation in the Coxeter system $ (W,S)$}. 

\begin{prop} \label{ncubeexists}
    Let $(W,S)$ be a finite Coxeter system $(|W| < \infty)$. Let $|S| = n$. Then $\Cubes{n}{W,S} \geq 1$.
\end{prop}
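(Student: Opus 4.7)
My plan is to prove the existence of a Coxeter $n$-cube in $(W,S)$ by induction on $n = |S|$, leveraging the Coxeter square machinery developed in Section~\ref{theequation}.

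For the base cases I would proceed as follows. When $n = 1$, a Coxeter $1$-cube is by definition any oriented edge labeled by a non-identity element, so $1 \to s$ for any $s \in S$ suffices. When $n = 2$, the penultimate proposition of Section~\ref{theequation} exhibits the Coxeter square $(w_0 x^{-1}, x, w_0 x^{-1} w_0, w_0 x)$ for any $x \in W \setminus \{1, w_0\}$; taking $x = s$ for a simple reflection $s \in S$ gives a Coxeter $2$-cube with vertices $\{1, s, w_0 s, w_0\}$.

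For the inductive step from $n-1$ to $n$ (with $n \geq 3$), I would choose a simple reflection $s \in S$ and consider the parabolic subgroup $W' = W_{S \setminus \{s\}}$, which is finite of rank $n-1$ and therefore contains a Coxeter $(n-1)$-cube $\mathcal{C}$ by the inductive hypothesis. The plan is to extend $\mathcal{C}$ to an $n$-cube $\widetilde{\mathcal{C}}$ in $W$ by pairing each vertex $x$ of $\mathcal{C}$ with a corresponding vertex $x^{*} = w_0 x^{-1} w_0$ of a parallel ``outer'' cube $\mathcal{C}^{*}$ (using that the $w_0$-involution $x \mapsto w_0 x^{-1} w_0$ preserves length in $W$), and then joining $\mathcal{C}$ to $\mathcal{C}^{*}$ via $2^{n-1}$ connecting edges. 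Each connecting $2$-face would then be an instance of the Coxeter square $(w_0 x^{-1}, x, w_0 x^{-1} w_0, w_0 x)$ from the penultimate proposition of Section~\ref{theequation}, and the last proposition of that section—which propagates the Coxeter square property through the $w_0$-duality—should imply that $\mathcal{C}^{*}$ is itself a Coxeter $(n-1)$-cube.

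The main obstacle will be verifying the Coxeter square condition simultaneously on all $\binom{n}{2} \cdot 2^{n-2}$ two-faces of $\widetilde{\mathcal{C}}$. The two-faces internal to $\mathcal{C}$ are handled by the inductive hypothesis; the two-faces internal to $\mathcal{C}^{*}$ require showing that the $w_0$-involution preserves the Coxeter square structure; and the $(n-1)\cdot 2^{n-2}$ connecting two-faces require that each individual instance $(w_0 x^{-1}, x, w_0 x^{-1} w_0, w_0 x)$ glues compatibly with the others. For these verifications I would rely on Proposition~\ref{flippingthesquare} (orientation flexibility of Coxeter squares) and on the correspondence $w(\Phi_x) = \Phi_y \iff wN(x)w^{-1} = N(y)$ from Proposition~\ref{equationandcocycle} (valid under the length-additivity that Proposition~\ref{lengthsadd} guarantees), translating the inversion set identities into tractable cocycle identities.
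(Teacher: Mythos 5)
Your base cases are fine: for $n=2$, taking $x=s$ in the square $(w_0x^{-1},x,w_0x^{-1}w_0,w_0x)$ does give a Coxeter square (it comes from the \emph{last}, not penultimate, proposition of Section \ref{theequation}, but that is harmless, and it coincides with the square the paper exhibits for $A_2$). The inductive step, however, has a genuine gap: the connecting data you propose is not well-defined, so the object you describe is not a cube at all. Two concrete failures. (i) Your vertex pairing $v\mapsto v^{*}=w_0v^{-1}w_0$ sends the initial vertex $1$ of $\mathcal{C}$ to itself, so the connecting edge there would be labelled by the identity, which the definition of a Coxeter square forbids; moreover, since this map preserves length and every vertex of $\mathcal{C}$ has length at most $\ell(w_{S\setminus\{s\}})<\ell(w_0)$, no vertex of $\mathcal{C}^{*}$ can equal $w_0$, contradicting Proposition \ref{wroandcube}, which forces the terminal vertex of an $n$-cube in a finite rank-$n$ system to be $w_0$. (ii) More fundamentally, your connecting $2$-faces do not glue. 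In the square $(w_0x^{-1},x,w_0x^{-1}w_0,w_0x)$ the element $x$ is an \emph{edge}, and the two connecting edges this square imposes are $w_0x$ at the source of $x$ and $w_0x^{-1}$ at the target of $x$ (so $x$ and $w_0x^{-1}w_0$ are \emph{opposite edges} of the square; you are re-using the same formula to pair vertices, which conflates the two notions). A vertex of $\mathcal{C}$ meets several distinct inner edges, and each one dictates a different label for the single connecting edge at that vertex: already at the initial vertex of $\mathcal{C}$, the $n-1\geq 2$ outgoing edges $x_a\neq x_b$ (initial edges of a Coxeter square are always distinct) would force the connecting edge to equal both $w_0x_a$ and $w_0x_b$. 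So for $n\geq 3$ the squares you name cannot all be faces of one common $n$-cube.

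What the argument needs is a \emph{single} group element labelling all $2^{n-1}$ connecting edges, and that is exactly how the paper proceeds: it takes the Brink--Howlett generator $\nu=\nu(\alpha_s,\Pi_J)=w_0w_J$, where $J=S\setminus\{s\}$ and $w_J$ is the longest element of $W_J$ (Proposition \ref{generatorexistence}). Because $\nu$ carries $\Pi_J$ to another set of simple roots $\Pi_K$, conjugation by $\nu$ is an isomorphism $(W_J,J)\to(W_K,K)$ satisfying $\nu(\Phi_g)=\Phi_{\nu g\nu^{-1}}$ for every $g\in W_J$, so every connecting face $(\nu,\,g,\,\nu g\nu^{-1},\,\nu)$ is a Coxeter square and the glued diagram is a Coxeter $n$-cube. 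Note that your per-edge labels $w_0x$ and $w_0x^{-1}$ agree with $\nu$ precisely when $x=w_J$ (an involution), which happens for the unique edge of a $1$-cube; this is why your rank-$2$ construction is secretly the paper's construction, while the same recipe cannot propagate to any larger inner cube. To repair your proof you would have to replace the $w_0$-duality squares by these $\nu$-conjugation squares, at which point it becomes the paper's proof.
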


\begin{remark}
    If $ |W| = \infty$, then Coxeter $ n$-cubes do not exist in $ (W,S)$ where $ |S| = n$. This will be proven later.
\end{remark}

\begin{proof}
    We proceed by induction on $ n = |S|$.  If $n = 1$, then we are dealing with the $A_{1}$ Coxeter system where $ W = \{ 1,s \} $. Clearly, the following is a $1$-cube in $A_{1}$:

    \[
\begin{tikzcd}[row sep=1.5em, column sep = 1.5em]
\sbullet \arrow[rr ,"s"]  &&
\sbullet 
\end{tikzcd}
\]
 Suppose now that we have proven the proposition true when $|S| = k $. We desire to prove the case when $ |S| = k+1$. Let $ |S| = k+1$ and let $ J \seq S$ such that $ |J| = k$. Since $ (W_{J} ,J)$ is a finite Coxeter system with $ |J| = k$, the induction hypothesis implies that there is at least one Coxeter $k$-cube inside the Coxeter system $ (W_{J} ,J)$. For the sake of visual simplicity, let us suppose that that $k =3$. Thus, the Coxeter $ k$-cube inside $ (W_{J} ,J)$ will have the following cubical commutative diagram:

  \[
\begin{tikzcd}[row sep=3em, column sep = 3em]
\sbullet \arrow[rr ,"w_{1}"] \arrow[dr, swap,"w_{2}"] \arrow[dd,swap, "w_{3}"] &&
\sbullet \arrow[dd, "w_{4}" near end] \arrow[dr,"w_{5}"] \\
& \sbullet \arrow[rr , "w_{6}" near start] \arrow[dd, swap, "w_{12}" near start] &&
\sbullet \arrow[dd,"w_{7}"] \\
\sbullet \arrow[rr, "w_{8}" near end] \arrow[dr, swap, "w_9"] &&  \sbullet \arrow[dr, "w_{10}"] \\
& \sbullet \arrow[rr, "w_{11}"] && \sbullet
\end{tikzcd}
\]
Let $ s $ be the unique element of $ S \sm J$. Note that since $ |W| < \infty$, it follows that $ \Phi_{J \cup \{ s \} } \sm \Phi_{J}$ is finite, so $ \nu(\alpha_{s}, \Pi_{J})$ exists (and $\nu(\alpha_{s} , \Pi_{J}) \neq 1$). Let $K \seq S$ such that $ \Pi_{K} = \nu(\alpha_{s} , \Pi_{J}) \Pi_{J}$. Since $ \nu(\alpha_{s} , \Pi_{J})$ sends $ \Pi_{J}$ to $ \Pi_{K}$, conjugation by $ \nu(\alpha_{s} , \Pi_{J})$ induces an isomorphism from $ (W_{J} ,J)$ to $ (W_{K}, K)$. Thus, if $ w \in W_{J}$, it follows that $ \nu(\alpha_{s} , \Pi_{J})w\nu(\alpha_{s} , \Pi_{J})^{-1} = w'$ for some $ w' \in W_{K}$. This can be rewritten as $ \nu(\alpha_{s} , \Pi_{J})w = w' \nu(\alpha_{s} , \Pi_{J})$. Furthermore, the induced isomorphism also implies $ \nu(\alpha_{s} , \Pi_{J}) (\Phi_{w}) = \Phi_{w'}$. Hence, $ (\nu(\alpha_{s} , \Pi_{J}) , w , w' , \nu(\alpha_{s} , \Pi_{J}))$ is a Coxeter square. Furthermore, since $ \nu( \alpha_{s} , \Pi_{J})$ establishes an isomorphism from $ (W_{J} ,J)$ to $ (W_{K}, K)$, $ \nu(\alpha_{s} , \Pi_{J})$ transfers Coxeter $ k$-cubes in $ (W_{J} ,J)$ to Coxeter $k$-cubes in $ (W_{K}, K)$. Thus, in the following diagram, all commutative squares are Coxeter squares: 

\[
\begin{tikzcd}[row sep=3em, column sep = 3em]
& & & & \sbullet \arrow[rr ,"w_{1}'"] \arrow[dr, swap,"w_{2}'"] \arrow[dd,swap, "w_{3}'"] & & \sbullet \arrow[dd, "w_{4}'" near end] \arrow[dr,"w_{5}'"]\\
\sbullet \arrow[rr ,"w_{1}"] \arrow[dr, swap,"w_{2}"] \arrow[dd,swap, "w_{3}"] \arrow[urrrr, red] & &
\sbullet \arrow[dd, "w_{4}" near end] \arrow[dr,"w_{5}"] \arrow[urrrr, red] & & & \sbullet \arrow[rr , "w_{6}'" near start] \arrow[dd, swap, "w_{12}'" near start]& & \sbullet \arrow[dd,"w_{7}'"]\\
& \sbullet \arrow[rr , "w_{6}" near start] \arrow[dd, swap, "w_{12}" near start] \arrow[urrrr, red] & &
\sbullet \arrow[dd,"w_{7}"] \arrow[urrrr, red] & \sbullet \arrow[rr, "w_{8}'" near end] \arrow[dr, swap, "w_9'"] & & \sbullet \arrow[dr, "w_{10}'"] \\
\sbullet \arrow[rr, "w_{8}" near end] \arrow[dr, swap, "w_9"] \arrow[urrrr, red] & &  \sbullet \arrow[dr, "w_{10}"] \arrow[urrrr, red] & & & \sbullet \arrow[rr, "w_{11}'"]  & & \sbullet \\
& \sbullet \arrow[rr, "w_{11}"]  \arrow[urrrr, red]& & \sbullet \arrow[urrrr, red]
\end{tikzcd}
\]
The arrows in red have an edge label of $\nu(\alpha_{s} , \Pi_{J}) $. Hence, the above is a Coxeter $(k+1)$-cube. This proves the induction.

\end{proof}

\begin{prop} \label{highercubesdont}
    Let $ (W,S)$ be a Coxeter system. Let $ |S| = n$. If $ k > n$, then $ \Cubes{k}{W,S} = 0$.
\end{prop}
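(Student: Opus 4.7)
My plan is to argue by contradiction: assume a Coxeter $k$-cube $C$ exists in $(W,S)$ with $k>n$, and I will extract from $C$ a family of $k$ pairwise distinct simple roots, contradicting $|\Pi|=|S|=n$.

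Focus on the terminal vertex $d$ of $C$ (the corner opposite the initial vertex), and let $g_1,\dots,g_k \in W\setminus\{1\}$ denote the $k$ incoming edges at $d$, one per direction of the cube. The main technical step is to show that $\Phi_{g_1},\dots,\Phi_{g_k}$ are pairwise disjoint subsets of $\Phi^+$. For any $i \neq j$, consider the $2$-face of $C$ in directions $(i,j)$ containing $d$: it is a Coxeter square $(w,x,y,z)$ with $d$ at the top-right corner, so the two edges incoming to $d$ along this face are $w$ and $y$, which (in some order) coincide with $g_i$ and $g_j$. From $wx = yz$ together with $\ell(wx) = \ell(w) + \ell(x)$ (Proposition~\ref{lengthsadd}) and the length-additive decomposition of a left inversion set (a root-theoretic translation of Proposition~\ref{reflectioncocycle} via Proposition~\ref{equivalenceofwro}), I obtain
$$\Phi_{wx} \;=\; \Phi_w \coprod w(\Phi_x) \;=\; \Phi_w \coprod \Phi_y,$$
where the second equality uses the Coxeter-square identity $w(\Phi_x) = \Phi_y$. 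The disjointness of this union immediately yields $\Phi_{g_i} \cap \Phi_{g_j} = \emptyset$.

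To conclude, each $g_i \neq 1$, so Proposition~\ref{leftinversionset} applied to any reduced expression of $g_i$ places the first simple root of that expression into $\Phi_{g_i} \cap \Pi$. Picking one such $\alpha_{\tau(i)} \in \Phi_{g_i} \cap \Pi$ for every $i$ and invoking the pairwise disjointness above, the elements $\alpha_{\tau(1)},\dots,\alpha_{\tau(k)}$ are $k$ distinct members of $\Pi$, forcing $k \leq n$ and contradicting $k>n$. Hence $\Cubes{k}{W,S} = 0$.

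The only subtle choice is to work at the terminal vertex rather than the initial one. The outgoing edges $x,z$ at the initial corner of a Coxeter square can have overlapping left inversion sets (for instance in $A_3$ a Coxeter square with $x = s_1$ and $z = s_1s_2$ has $\alpha_1 \in \Phi_x \cap \Phi_z$), so the symmetric-looking argument at the initial vertex fails. At the terminal corner, by contrast, the length-additive identity $\Phi_{wx} = \Phi_w \coprod w(\Phi_x)$ produces the needed disjointness of the two incoming inversion sets essentially for free.
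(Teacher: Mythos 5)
Your proof is correct and follows essentially the same route as the paper's: both arguments reduce to showing that the incoming (terminal) edges of any $2$-face have disjoint left inversion sets, then use that each non-identity edge's inversion set contains a simple root to force $k \leq |\Pi| = n$ by pigeonhole. The only cosmetic difference is that you derive the disjointness $\Phi_w \cap \Phi_y = \emptyset$ from the length-additive decomposition $\Phi_{wx} = \Phi_w \coprod w(\Phi_x)$, whereas the paper gets it in one line from $w^{-1}(\Phi_y) = \Phi_x \seq \Phi^{+}$; your closing observation that the initial-vertex analogue fails is also accurate (the paper's $A_2$ square already exhibits $\alpha_1 \in \Phi_{s_1} \cap \Phi_{s_1 s_2}$).
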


\begin{proof}
    Note that if $(w,x,y,z) \in (W \sm \{ 1 \} )^{4}$ is a Coxeter square, then the equation $ w(\Phi_{x}) = \Phi_{y}$ implies $ w^{-1}(\Phi_{y}) \seq \Phi^{+}$, which gives $ \Phi_{w} \cap \Phi_{y} = \emptyset$. This shows that if $ w$ and $ y$ are the terminal edges of a Coxeter square, then $ \Phi_{w}$ and $ \Phi_{y}$ must be disjoint.

    Let $ x_{1}, x_{2} , \dots , x_{k}$ denote the terminal edges of a Coxeter $ k$-cube. Note that if $ i \neq j$, then $ x_{i}$ and $ x_{j}$ will be the terminal edges of some Coxeter square. Hence, $ \Phi_{x_{i}}$ and $ \Phi_{x_{j}}$ will be disjoint when $ i \neq j$. Thus, the left inversion sets $ \Phi_{x_{i}}$ for $ i=1,2, \dots, k$ are all pairwise disjoint. But note that each edge of a Coxeter $ k$-cube is a non-identity element. Hence, $ \Phi_{x_{i}}$ must contain at least one simple root for each $ i =1,2, \dots , k$. If $ k > n = |S|$, then at least one distinct pair $ x_{i}$ and $ x_{j}$ will have the same simple root in their left inversion sets. This contradicts disjointness of the left inversion sets. Thus, $  k\leq n$.
\end{proof}

Given a finite Coxeter system $ (W,S)$ with $ |S| = n$, Propositions \ref{ncubeexists} and \ref{highercubesdont} imply that Coxeter $ n$-cubes exist and are the largest higher dimensional Coxeter cubes inside $ (W,S)$. We now give some examples.

\begin{ex} \label{A_{2}square}
    Consider the $ A_{2}$ Coxeter system with Coxeter generators $ S = \{ s_{1} , s_{2} \} $.  Since $ |S| = 2$, we know that there must exist at least one Coxeter square and that $ \Cubes{n}{A_{2}} = 0$ if $ n \geq 3$. We construct a Coxeter square in $ A_{2}$ using the inductive method as described in the proof of \ref{ncubeexists}. Consider $ J : = \{ s_{1} \} \seq S$. Clearly, there is a Coxeter $ 1$-cube inside $ (W_{J}, J)$:

    \[
\begin{tikzcd}[row sep=1.5em, column sep = 1.5em]
\sbullet \\
\\
\sbullet \arrow[uu, "s_{1}"] 
\end{tikzcd}
\]
Since $A_{2}$ is a finite Coxeter system, we know that $ \nu( \alpha_{2} ,  \Pi_{J} )$ exists where $ \alpha_{2}$ is the simple root corresponding to $ s_{2}$. More precisely, $ \nu( \alpha_{2} , \Pi_{J}) = s_{1}s_{2}$. Since $ \nu(\alpha_{2} , \Pi_{J}) \{ \alpha_{1} \} = \{ \alpha_{2} \} $, we get the following Coxeter square:

\[ \begin{tikzcd}
\sbullet \arrow[rr, red, "s_{1}s_{2}"] & & \sbullet \\
\\
\sbullet \arrow[uu, "s_{1}"] \arrow[rr, red, "s_{1}s_{2}"] & & \sbullet \arrow[uu, "s_{2}"]
\end{tikzcd}
\]
It is not too hard to show that any other Coxeter square in the $A_{2}$ Coxeter system is a reoreintation of the above Coxeter square. Hence, $ \Cubes{2}{A_{2}} = 1$ for the $ A_{2}$ Coxeter system.

\end{ex}

\begin{ex} \label{cubesexample}
    Let us now consider the $ A_{3}$ Coxeter system with Coxeter generators $ S = \{ s_{1}, s_{2}, s_{3} \}$. Using the inductive method as described in the proof of \ref{ncubeexists}, one can find the following two Coxeter cubes:

\[ \begin{tikzcd}
\sbullet \arrow[dd, "s_{1}"] \arrow[dr , "s_{1}s_{2}"] \arrow[rrr, "s_{1}s_{2}s_{3}"] &  & & \sbullet \arrow[dd, "s_{2}" near end] \arrow[dr, "s_{2}s_{3}"]\\
 &  \sbullet \arrow[dd, swap, "s_{2}" near start]  \arrow[rrr, "s_{1}s_{2}s_{3}" near start]  &  & &\sbullet \arrow[dd, "s_{3}"]\\
 \sbullet \arrow[dr , swap, "s_{1}s_{2}"] \arrow[rrr, "s_{1}s_{2}s_{3}" near end] &  & &\sbullet \arrow[dr, "s_{2}s_{3}" near start]\\
 &  \sbullet \arrow[rrr, "s_{1}s_{2}s_{3}"] &  & &\sbullet 
\end{tikzcd}
\hspace{1cm}
\begin{tikzcd}
\sbullet \arrow[dd, "s_{1}"] \arrow[dr , "s_{3}"] \arrow[rrr, "s_{2}s_{1}s_{3}s_{2}"] &  & & \sbullet \arrow[dd, "s_{3}" near end] \arrow[dr, "s_{1}"]\\
 &  \sbullet \arrow[dd, swap, "s_{1}" near start]  \arrow[rrr, "s_{2}s_{1}s_{3}s_{2}" near start]  &  & &\sbullet \arrow[dd, "s_{3}"]\\
 \sbullet \arrow[dr , swap, "s_{3}"] \arrow[rrr, "s_{2}s_{1}s_{3}s_{2}" near end] &  & &\sbullet \arrow[dr, "s_{1}"]\\
 &  \sbullet \arrow[rrr, "s_{2}s_{1}s_{3}s_{2}"] &  & &\sbullet 
\end{tikzcd}
\]
These two cubes were first noticed by Dyer in Example 12.5 of \cite{dyer2011groupoids2}. We will later prove that these are the only 2 distinct Coxeter cubes in $ A_{3}$ modulo reorientation, hence $ \Cubes{3}{A_{3}} = 2$ in $ A_{3}$.

\end{ex}

\begin{prop} \label{wroandsquare}
    Let $ (W,S)$ be a Coxeter system. If $ (w,x,y,z)$ is a Coxeter square, then $ w \vee y = wx = yz$ where $ w \vee y$ denotes the join of $ w$ and $ y$ in the weak right order. Furthermore,
    
    $$ \Phi_{w\vee y} = \Phi_{w} \coprod \Phi_{y}$$
\end{prop}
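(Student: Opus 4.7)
The plan is to prove the disjoint-union formula for $\Phi_{wx}$ first, and then derive the join statement from it. The key tools are the length-additivity consequence of Proposition \ref{lengthsadd}, the cocycle formula of Proposition \ref{reflectioncocycle}, and the equivalence between inclusion of left inversion sets and the weak right order (Proposition \ref{wroandlset}).

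First I would observe that the Coxeter square hypothesis $w(\Phi_{x}) = \Phi_{y}$ gives $\ell(wx) = \ell(w) + \ell(x)$ by Proposition \ref{lengthsadd}. Then by Proposition \ref{flippingthesquare}, $(y,z,w,x)$ is also a Coxeter square, so $y(\Phi_{z}) = \Phi_{w}$ and hence $\ell(yz) = \ell(y) + \ell(z)$. In particular, $w \wro wx$ and $y \wro yz = wx$, so $wx$ is an upper bound of $w$ and $y$ in the weak right order.

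Next I would compute $\Phi_{wx}$ explicitly. Since $\ell(wx) = \ell(w) + \ell(x)$, Proposition \ref{reflectioncocycle} yields $N(wx) = N(w) \coprod wN(x)w^{-1}$. Translating through the bijection $t \leftrightarrow \alpha_{t}$ of Proposition \ref{equivalenceofwro} (and using the conjugation formula $w t_{\alpha} w^{-1} = t_{w(\alpha)}$), this becomes
$$\Phi_{wx} = \Phi_{w} \coprod w(\Phi_{x}) = \Phi_{w} \coprod \Phi_{y},$$
where the second equality uses the Coxeter square hypothesis. This already establishes the displayed disjoint-union formula, provided I can identify $wx$ with $w \vee y$.

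To finish, I would verify that $wx$ is the \emph{least} upper bound. Suppose $u \in W$ satisfies $w \wro u$ and $y \wro u$. Then by Proposition \ref{wroandlset}, $\Phi_{w} \subseteq \Phi_{u}$ and $\Phi_{y} \subseteq \Phi_{u}$, so $\Phi_{wx} = \Phi_{w} \cup \Phi_{y} \subseteq \Phi_{u}$, whence $wx \wro u$ by Proposition \ref{wroandlset} again. Thus $w \vee y$ exists and equals $wx$; since $wx = yz$, the chain of equalities $w \vee y = wx = yz$ follows, and the disjoint-union formula for $\Phi_{w \vee y}$ is exactly the one derived above. There is no real obstacle; the main point is to be careful that the cocycle formula of Proposition \ref{reflectioncocycle} translates correctly to roots via the bijection $N(\cdot) \leftrightarrow \Phi_{(\cdot)}$, so that $w N(x) w^{-1}$ corresponds to $w(\Phi_{x}) = \Phi_{y}$ rather than to some other set.
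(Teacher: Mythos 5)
Your proof is correct and follows essentially the same route as the paper: length additivity from Proposition \ref{lengthsadd}, the cocycle decomposition $N(wx) = N(w) \coprod wN(x)w^{-1}$ from Proposition \ref{reflectioncocycle}, translation to left inversion sets via the bijection of Proposition \ref{equivalenceofwro}, and the weak-order characterization to identify $wx$ as the join. The only differences are cosmetic: you invoke Proposition \ref{flippingthesquare} for an upper-bound observation that already follows from the disjoint-union formula, and you spell out the least-upper-bound verification that the paper compresses into its citation of Proposition \ref{equivalenceofwro} part (2).
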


\begin{remark}
    This was first proven as Lemma 3.5 of \cite{dyer2019characterization}.
\end{remark}

\begin{proof}
    Since $ w(\Phi_{x}) = \Phi_{y}$, Propositions \ref{lengthsadd} and \ref{equationandcocycle} imply that $ \ell(wx) = \ell(w) + \ell(x)$ and $ wN(x)w^{-1} = N(y)$. Thus, by Proposition \ref{reflectioncocycle}, we have:

    $$ N(wx) = N(w) \coprod wN(x)w^{-1}$$

    $$ = N(w) \coprod N(y)$$
    Hence, $ N(wx) = N(w) \coprod N(y)$. But by Proposition \ref{equivalenceofwro} part (1), we deduce that $ \Phi_{wx} = \Phi_{w} \coprod \Phi_{y}$. By Proposition \ref{equivalenceofwro} part (2), we conclude that $ w \vee y = wx = yz$.
\end{proof}

\begin{prop} \label{wroandcube}
    Let $ (W,S)$ be a Coxeter system. If $ x_{1} , x_{2} , \dots , x_{n}$ are the terminal edges of a Coxeter $n$-cube in $ (W,S)$, then 

    $$ \Phi_{\bigvee_{i=1}^{n} x_{i}} = \coprod_{i=1}^{n} \Phi_{x_{i}}$$
    where $ \bigvee_{i=1}^{n} x_{i}$ denotes the join of the $ x_{i}$'s in the weak right order. Furthermore, if $ |W| < \infty$ and $ |S| = n$, then $w_{0} =\bigvee_{i=1}^{n} x_{i}$ where $ w_{0}$ is the longest element of $ W$. Hence,

    $$ \Phi^{+} = \coprod_{i=1}^{n} \Phi_{x_{i}}$$
\end{prop}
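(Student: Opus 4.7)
The plan is to proceed by induction on $n$. The base cases $n = 1$ (trivial, with one terminal edge) and $n = 2$ (Proposition \ref{wroandsquare} directly) handle the start of the induction. For the inductive step, I fix direction $n$ and decompose the $n$-cube into two parallel $(n-1)$-faces connected by direction-$n$ edges: the face $F^+$ at $T$ omitting direction $n$, with terminal vertex $T$, terminal edges $x_1, \ldots, x_{n-1}$, and initial vertex $v_{\{n\}}$; and the parallel face $F^-$ at coordinate $n = 0$, with initial $I$, terminal $T_n$ (where $T = x_n T_n$), and terminal edges $y_{n,i}$ for $i < n$. Write $\sigma_n = T v_{\{n\}}^{-1}$ and $\mu_n = T_n I^{-1}$ for the respective ``sizes,'' and let $a_n = v_{\{n\}} I^{-1}$ denote the initial edge in direction $n$. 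By the inductive hypothesis applied to $F^+$ and $F^-$, I obtain $\sigma_n = \bigvee_{i<n} x_i$ with $\Phi_{\sigma_n} = \coprod_{i<n} \Phi_{x_i}$ and $\Phi_{\mu_n} = \coprod_{i<n} \Phi_{y_{n,i}}$.

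The key step is to verify that $(x_n, \mu_n, \sigma_n, a_n)$ is itself a Coxeter square. The group identity $x_n \mu_n = \sigma_n a_n = T I^{-1}$ holds immediately because both sides are products of edge labels along monotone paths from $I$ to $T$, one through $T_n$ and the other through $v_{\{n\}}$. The condition $x_n(\Phi_{\mu_n}) = \Phi_{\sigma_n}$ is the harder half, and I plan to extract it from the cube's 2-face structure. For each $i < n$, the 2-face spanned by directions $i$ and $n$ meets at $T$ with vertices $T_{in}, T_n, T_i, T$ and edges $x_n, y_{n,i}, x_i, y_{i,n}$; reading this as a Coxeter square with $x_n$ as top edge and $y_{n,i}$ as left edge, the defining property of a Coxeter square gives $x_n(\Phi_{y_{n,i}}) = \Phi_{x_i}$. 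Taking the disjoint union over $i < n$ then yields $x_n(\Phi_{\mu_n}) = \coprod_{i<n} \Phi_{x_i} = \Phi_{\sigma_n}$, as desired.

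With $(x_n, \mu_n, \sigma_n, a_n)$ confirmed to be a Coxeter square, a final application of Proposition \ref{wroandsquare} yields $x_n \vee \sigma_n = T I^{-1}$ and $\Phi_{T I^{-1}} = \Phi_{x_n} \coprod \Phi_{\sigma_n}$; combining with the inductive description of $\sigma_n$ produces $T I^{-1} = \bigvee_{i=1}^n x_i$ and $\Phi_{T I^{-1}} = \coprod_{i=1}^n \Phi_{x_i}$. For the ``furthermore'' part with $|W| < \infty$ and $|S| = n$, I note that each $\Phi_{x_i}$ contains at least one simple root by Proposition \ref{leftinversionset} (since $x_i \neq 1$), and pairwise disjointness together with $|\Pi| = n$ forces a distinct simple root into each $\Phi_{x_i}$, so $\Pi \subseteq \Phi_{\bigvee x_i}$; this means every simple reflection is a left descent of $\bigvee x_i$, which in a finite Coxeter group characterizes $w_0$ uniquely, giving $\bigvee x_i = w_0$ and $\Phi^+ = \coprod \Phi_{x_i}$. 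The main obstacle will be the 2-face orientation bookkeeping: the Coxeter square $(x_n, y_{n,i}, x_i, y_{i,n})$ must be read off in this specific order so that $x_n(\Phi_{y_{n,i}}) = \Phi_{x_i}$ falls out directly from the defining property (rather than from the flipped version of Proposition \ref{flippingthesquare}); once this orientation is pinned down, the Coxeter square $(x_n, \mu_n, \sigma_n, a_n)$ assembles cleanly and Proposition \ref{wroandsquare} finishes the argument.
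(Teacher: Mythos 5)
Your proof is correct, but it runs the induction along a genuinely different decomposition than the paper. The paper's proof \emph{collapses} the cube: by Proposition \ref{wroandsquare}, the terminal-corner square containing $x_{n-1}$ and $x_{n}$ may be replaced by its diagonal, so that $x_{1},\dots,x_{n-2},\,x_{n-1}\vee x_{n}$ become the terminal edges of a Coxeter $(n-1)$-cube, and one recurses; the join formula then follows from associativity of joins. You instead \emph{slice} the cube into the two parallel $(n-1)$-faces transverse to direction $n$, apply the inductive hypothesis to each, and assemble the macro-square $(x_{n},\mu_{n},\sigma_{n},a_{n})$ from the two face diagonals and the direction-$n$ edges, checking its Coxeter-square property via the $n-1$ corner $2$-faces together with the bottom face's decomposition $\Phi_{\mu_{n}}=\coprod_{i<n}\Phi_{y_{n,i}}$, and then invoke Proposition \ref{wroandsquare} a single time. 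What each buys: the paper's collapsing silently requires that the composite of two adjacent Coxeter squares is again a Coxeter square (so that the collapsed diagram really is a Coxeter $(n-1)$-cube), a point the paper treats as geometrically evident; your two faces are literal sub-diagrams of the given cube, so no such check is needed, and the real work is concentrated instead in the macro-square verification, which you carry out correctly. Two small points. First, the ``orientation bookkeeping'' you flag as the main obstacle is a non-issue: by Proposition \ref{flippingthesquare}, whichever of the two readings $(x_{n},y_{n,i},x_{i},y_{i,n})$ or $(x_{i},y_{i,n},x_{n},y_{n,i})$ the face officially carries, the other is automatically a Coxeter square as well, so $x_{n}(\Phi_{y_{n,i}})=\Phi_{x_{i}}$ is available in either case. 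Second, your induction quietly strengthens the proposition to include ``the join of the terminal edges equals the path product from the initial to the terminal vertex'' (you need $\sigma_{n}=\bigvee_{i<n}x_{i}$ and $\mu_{n}=\bigvee_{i<n}y_{n,i}$); since you re-derive this for $n$ at the end ($TI^{-1}=\bigvee_{i=1}^{n}x_{i}$), the induction closes, but the strengthened statement is what you should formally induct on. The finite-case ``furthermore'' argument is the same as the paper's.
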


\begin{proof}
    Note that if $ x_{1}, x_{2}, \dots, x_{n}$ are the terminal edges of a Coxeter $n$-cube, then $ x_{1} , x_{2} , \dots , (x_{n-1}\vee x_{n})$ are the terminal edges of a Coxeter $(n-1)$-cube. Indeed, by Proposition \ref{wroandsquare}, $ x_{n-1}\vee x_{n}$ is the composition of two compatible morphisms in the Coxeter square determined by $ x_{n-1}$ and $x_{n}$. This process can be viewed geometrically as \textquotedblleft collapsing" the Coxeter $n$-cube along the Coxeter square that contains $ x_{n-1}$ and $ x_{n}$. In the case when $ n=3$, the \textquotedblleft collapsing" of the cube can be visualized in the following diagram:

    \[    
    \begin{tikzcd}
& & & \sbullet \arrow[rr, "x_{1}"] & & \sbullet \\
\sbullet \arrow[urrr] \arrow[rr] & & \sbullet \arrow[urrr, swap, "x_{2}"] \\
& & & \sbullet \arrow[uu] \arrow[rr] & & \sbullet \arrow[uu, "x_{3}"]\\
\sbullet \arrow[uu] \arrow[rr] \arrow[urrr] & & \sbullet \arrow[uu] \arrow[urrr]
\end{tikzcd}
\hspace{1cm}
\begin{tikzcd}
    & & & \sbullet \arrow[rr, "x_{1}"] & & \sbullet \\
\\
\\
\\
\sbullet  \arrow[rr] \arrow[uuuurrr] & & \sbullet \arrow[uuuurrr, swap, "x_{2} \vee x_{3}"]
\end{tikzcd}
\]
Thus, we can recursively apply Proposition \ref{wroandsquare} to deduce that

$$ \Phi_{\bigvee_{i=1}^{n} x_{i}} = \coprod_{i=1}^{n} \Phi_{x_{i}}$$
Suppose now that $|W| < \infty$ and $ |S| = n$. Each $ \Phi_{x_{i}}$ contains at least one simple root, and since the $\Phi_{x_{i}}$'s are disjoint, it follows that each simple root must occur in exactly one of the $ \Phi_{x_{i}}$'s. Thus, $ \Phi_{\bigvee_{i=1}^{n}x_{i}}$ contains all of the simple roots. Thus, $ (\bigvee_{i=1}^{n}x_{i})^{-1}(\Pi) \seq \Phi^{-}$, which implies $ (\bigvee_{i=1}^{n}x_{i})^{-1}(\Phi^{+}) \seq \Phi^{-}$. Hence, $ (\bigvee_{i=1}^{n}x_{i})^{-1} = w_{0}$. But since $ w_{0}$ is an involution, we deduce that $ \bigvee_{i=1}^{n}x_{i} = w_{0}$. Since $ \Phi_{w_{0}} = \Phi^{+}$, we deduce:

$$ \Phi^{+} = \coprod_{i=1}^{n} \Phi_{x_{i}}$$

\end{proof}

Let $ (W,S)$ be a finite Coxeter system with $ |S| = n$. Propositions \ref{wroandsquare} and \ref{wroandcube} imply that if $ x_{1}, x_{2} , \dots, x_{n}$ are the terminal edges of a Coxeter $n$-cube in $ (W,S)$, then $ w_{0} =\bigvee_{i=1}^{n} x_{i}$ is the group element obtained by composing the group elements along a path from the initial vertex to the terminal vertex of the Coxeter $ n$-cube. 

\begin{cor} \label{summinglengths}
    Let $ (W,S)$ be a finite Coxeter system ($|W|<\infty$) with $ |S| = n$. If $ x_{1} , x_{2} , \dots , x_{n}$ are the terminal edges of a Coxeter $n$-cube in $ (W,S)$, then

    $$ \ell(w_{0}) = \sum_{i=1}^{n} \ell(x_{i})$$
    where $ w_{0}$ denotes the longest element of $ W$.
\end{cor}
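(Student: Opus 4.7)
The plan is to derive this as an immediate consequence of Proposition \ref{wroandcube}, which already does essentially all of the heavy lifting. That proposition says that when $|W|<\infty$ and $|S|=n$, the positive roots decompose as a disjoint union
$$\Phi^{+} = \coprod_{i=1}^{n} \Phi_{x_{i}}.$$

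First I would take cardinalities of both sides, obtaining $|\Phi^{+}| = \sum_{i=1}^{n} |\Phi_{x_{i}}|$ since the union is disjoint. Then I would apply Proposition \ref{lengthandlset} (or equivalently part (3) of Proposition \ref{equivalenceofwro}), which identifies $|\Phi_{x}|$ with $\ell(x)$ for any $x\in W$, to each term: $|\Phi_{x_{i}}| = \ell(x_{i})$, and similarly $|\Phi^{+}| = |\Phi_{w_{0}}| = \ell(w_{0})$, where the latter equality uses the standard fact that $w_{0}$ sends every positive root to a negative root, so $\Phi_{w_{0}} = \Phi^{+}$. Stringing these together gives exactly $\ell(w_{0}) = \sum_{i=1}^{n} \ell(x_{i})$.

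There is no real obstacle here; the corollary is essentially a cardinality count applied to the set-theoretic decomposition already established in Proposition \ref{wroandcube}. The only thing worth being explicit about is why $\Phi_{w_0} = \Phi^+$, but this is a standard property of the longest element and needs no additional argument beyond citing it. The proof should be two or three lines.
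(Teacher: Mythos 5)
Your proposal is correct and matches the paper's own proof, which likewise deduces the corollary directly from the decomposition $\Phi_{w_{0}} = \Phi^{+} = \coprod_{i=1}^{n}\Phi_{x_{i}}$ established in Proposition \ref{wroandcube}, combined with the identification $|\Phi_{x}| = \ell(x)$. No gaps; the level of detail you give (including the remark that $\Phi_{w_{0}} = \Phi^{+}$) is, if anything, slightly more explicit than the paper's one-line argument.
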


\begin{proof}
    Follows quickly from the fact that $ \Phi_{w_{0}} =\Phi^{+} = \coprod_{i=1}^{n}\Phi_{x_{i}}$.
\end{proof}

\begin{cor} \label{infinitedoesnt}
    Let $ (W,S)$ be a Coxeter system with $ |S| = n$. If $ |W| = \infty$, then Coxeter $n$-cubes do not exist in $(W,S)$, meaning that $ \Cubes{n}{W,S} = 0$.
\end{cor}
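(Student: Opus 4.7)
The plan is to argue by contrapositive: assuming a Coxeter $n$-cube with terminal edges $x_1,\ldots,x_n$ exists in $(W,S)$, I will deduce $|W| < \infty$.

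First I would invoke the first assertion of Proposition \ref{wroandcube}, whose proof is a recursive application of Proposition \ref{wroandsquare} and does not rely on any finiteness hypothesis, to conclude that $\Phi_w = \coprod_{i=1}^n \Phi_{x_i}$, where $w := \bigvee_{i=1}^n x_i$. Next, reusing the observation from the proof of Proposition \ref{highercubesdont}, the sets $\Phi_{x_i}$ are pairwise disjoint, and each $\Phi_{x_i}$ contains at least one simple root (for example, the simple root corresponding to the first letter of any reduced expression for $x_i$, by Proposition \ref{leftinversionset}). With $n = |S|$ pairwise disjoint inversion sets each capturing a simple root out of the $n$ available, every simple root lies in exactly one $\Phi_{x_i}$, so $\Pi \subseteq \Phi_w$.

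The main step is then to upgrade $\Pi \subseteq \Phi_w$ to $\Phi_w = \Phi^+$. Since $w^{-1}(\alpha_s) \in \Phi^-$ for every $s \in S$, and any $\beta \in \Phi^+$ can be written as $\beta = \sum_{s \in S} c_s \alpha_s$ with $c_s \geq 0$ (because $\Phi^+ \subseteq \cone(\Pi)$), linearity gives $w^{-1}(\beta) = \sum_{s} c_s\, w^{-1}(\alpha_s) \in -\cone(\Pi)$. Since $w^{-1}(\beta) \in \Phi \setminus \{0\}$ and $\Phi = \Phi^+ \sqcup \Phi^-$, this forces $w^{-1}(\beta) \in \Phi^-$, hence $\beta \in \Phi_w$, giving $\Phi_w = \Phi^+$.

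To close, Proposition \ref{lengthandlset} yields $|\Phi^+| = |\Phi_w| = \ell(w) < \infty$, so $\Phi$ is finite. Because $\Pi \subseteq \Phi$ spans $V$ and the canonical representation is faithful, the action of $W$ on $\Phi$ is faithful, which embeds $W$ into the finite group $\operatorname{Sym}(\Phi)$. Therefore $|W| < \infty$, contradicting the hypothesis $|W| = \infty$, and so $\Cubes{n}{W,S} = 0$. I expect the main obstacle to be articulating the cone argument upgrading $\Pi \subseteq \Phi_w$ to $\Phi^+ \subseteq \Phi_w$; the remaining steps are essentially bookkeeping against previously established propositions.
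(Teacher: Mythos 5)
Your proposal is correct, and its skeleton is identical to the paper's own proof: invoke the first (finiteness-free) assertion of Proposition~\ref{wroandcube} to get $\Phi_{w} = \coprod_{i=1}^{n}\Phi_{x_{i}}$ with $w = \bigvee_{i=1}^{n}x_{i}$, note that the $n$ pairwise disjoint inversion sets must collectively capture all $n$ simple roots so that $\Pi \subseteq \Phi_{w}$, and then upgrade this to $\Phi^{+} \subseteq \Phi_{w}$ exactly as you do (the paper compresses your cone argument into the single line ``$w^{-1}(\Pi)\subseteq\Phi^{-}$ implies $w^{-1}(\Phi^{+})\subseteq\Phi^{-}$,'' but it is the same observation). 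The only divergence is the closing contradiction. The paper appeals directly to the standard fact that $|W| = \infty$ forces $|\Phi^{+}| = \infty$, and then contradicts $\ell(w) = |\Phi_{w}| = |\Phi^{+}|$ being finite; this fact is not proved (or even explicitly cited) in the paper. You instead run the equivalence in the other direction: from $|\Phi^{+}| = \ell(w) < \infty$ you conclude $\Phi$ is finite, and since $\Pi$ spans $V$ and the canonical representation is faithful, $W$ acts faithfully on the finite set $\Phi$, embedding $W$ into $\operatorname{Sym}(\Phi)$ and forcing $|W| < \infty$. This buys you a marginally more self-contained proof, as it rests only on Corollary 5.4 of Humphreys (faithfulness), which the paper does cite, rather than on an unproved finiteness criterion for root systems; the paper's version is shorter at the cost of that external appeal. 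Both are valid.
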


\begin{proof}
    Suppose that there existed a Coxeter $n$-cube in $W$. Let $ x_{1} , x_{2} , \dots , x_{n}$ be the terminal edges of this $n$-cube. Then by Proposition \ref{wroandcube}, we deduce that

    $$ \Phi_{\bigvee_{i=1}^{n}x_{i}}  = \coprod_{i=1}^{n} \Phi_{x_{i}}$$
    Each $ \Phi_{x_{i}}$ must contain at least one simple root. Since the $ \Phi_{x_{i}}$'s are disjoint, and since $ |S| = n$, it follows each simple root must occur as an element $\coprod_{i=1}^{n} \Phi_{x_{i}} $. This implies that $ (\bigvee_{i=1}^{n}x_{i})^{-1}(\Pi) \seq \Phi^{-}$, which implies $ (\bigvee_{i=1}^{n}x_{i})^{-1}(\Phi^{+}) \seq \Phi^{-} $. But since $ |W| = \infty$, it follows that $ |\Phi^{+}| = \infty$. Thus:

    $$ \ell(\bigvee_{i=1}^{n}x_{i}) = |\Phi^{+}| = \infty$$
    which is a contradiction.
    
\end{proof}

We would like to show that if $ X$ is a Coxeter $n$-cube with terminal edges $ x_{1} , x_{2} , \dots, x_{n}$, then $X$ is uniquely determined by $ x_{1} , x_{2} , \dots , x_{n}$. Before we prove this, we need to establish some notation regarding oriented $n$-dimensional cubes. The vertices of an oriented $n$-dimenisonal cube are labeled by binary strings of length $ n$. The edges of an oriented $n$-dimensional are given labels of the form $ e_{a_{1}a_{2}\dots a_{n}}$ where $ a_{i} \in \{ 0,1,\star \}$ for all $1\leq i \leq n$ and there is exactly one $ j$ such that $ a_{j} = \star$. If $ a_{j} = \star$, then we say that $ e_{a_{1}a_{2}\dots a_{j-1}\star a_{j+1}\dots a_{n}}$ is the oriented edge from the vertex $ a_{1}a_{2}\dots a_{j-1}0a_{j+1}\dots a_{n}$ to the vertex $ a_{1}a_{2}\dots a_{j-1}1a_{j+1}\dots a_{n}$. An example of such a labelling for an oriented $3$-cube is given below:

\vspace{.5cm}
 \[
\begin{tikzcd}[row sep=3em, column sep = 3em]
& 011 \arrow[rr, "e_{\star 1 1}"] & & 111 \\
001 \arrow[ur, "e_{0\star 1}"] \arrow[rr, "e_{\star 0 1}" near start, swap] & & 101 \arrow[ur, "e_{1\star  1}"] \\
& 010 \arrow[uu, "e_{01 \star}" near end, swap] \arrow[rr , "e_{\star 1 0}" near end] & & 110 \arrow[uu, "e_{1 1 \star}", swap] \\
000 \arrow[rr, "e_{\star 00}", swap] \arrow[ur, "e_{0\star 0}"] \arrow[uu, "e_{00\star}"]& & 100 \arrow[uu, "e_{10\star}" near start] \arrow[ur, "e_{1\star 0}", swap]
\end{tikzcd}
\]
\vspace{.5cm}

Note that by this labeling convention, the string of all $0$'s denotes the initial vertex of an oriented $n$-cube and the string of all $ 1$'s denotes the terminal vertex of an oriented $n$-cube.

\begin{prop} \label{cubedetermined}
    Let $ (W,S)$ be a Coxeter system. Let $ X$ be a Coxeter $n$-cube in $ (W,S)$. If $ X$ has terminal edges $ x_{1} , x_{2} , \dots , x_{n}$, then $X$ is entirely determined by these terminal edges.
\end{prop}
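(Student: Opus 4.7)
The plan is to reconstruct every edge of $X$ from the terminal edges $x_1, \ldots, x_n$. Using the labelling convention established just before the statement, each vertex of the underlying oriented $n$-cube is a binary string in $\{0,1\}^n$, with initial vertex $0\cdots 0$ and terminal vertex $1 \cdots 1$, and after a possible reindexing $x_i$ is the edge $e_{1 \cdots \star \cdots 1}$ with $\star$ in position $i$. For each vertex $v$ of $X$, define $p(v) \in W$ to be the product of the edge labels along any directed path from $v$ to $1 \cdots 1$; this is well-defined because every 2-face of $X$ is a Coxeter square, which is commutative by definition. For every directed edge of $X$ in direction $j$ from a vertex $v$ with $v_j = 0$ to the vertex $v'$ obtained by flipping coordinate $j$, one has
$$p(v) = (\text{label of this edge}) \cdot p(v'),$$
so the edge label is recovered as $p(v) \cdot p(v')^{-1}$. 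Hence it suffices to show that each $p(v)$ is determined by $x_1, \ldots, x_n$.

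Fix a vertex $v$ and set $I(v) := \{i : v_i = 0\}$. Let $X_v$ denote the sub-diagram of $X$ whose vertex set is $\{v' \in \{0,1\}^n : v \leq v' \leq 1 \cdots 1\}$ together with the edges and 2-faces of $X$ between these vertices. Combinatorially $X_v$ is an $|I(v)|$-cube with initial vertex $v$ and terminal vertex $1 \cdots 1$. Since every 2-face of $X_v$ is also a 2-face of $X$ and therefore a Coxeter square, $X_v$ inherits the structure of a Coxeter $|I(v)|$-cube. Direct inspection of which edges are incident to $1 \cdots 1$ within $X_v$ shows that its terminal edges are exactly $\{x_i : i \in I(v)\}$.

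Applying Proposition \ref{wroandcube} to $X_v$ identifies the group element associated to any path from the initial vertex $v$ to the terminal vertex $1 \cdots 1$ of $X_v$ as the join of its terminal edges in the weak right order. Therefore
$$p(v) = \bigvee_{i \in I(v)} x_i,$$
which depends only on $x_1, \ldots, x_n$. Consequently every $p(v)$, and hence every edge label of $X$, is determined by the terminal edges, as required.

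The only step requiring some care is the claim that $X_v$ inherits a Coxeter sub-cube structure from $X$. This relies on the working definition of a Coxeter $n$-cube as a cube-shaped commutative diagram whose 2-faces are Coxeter squares, since the 2-faces of $X_v$ form a subset of the 2-faces of $X$. Once this is granted, the rest of the argument is a direct application of Proposition \ref{wroandcube}.
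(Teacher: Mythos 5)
Your proposal is sound in its overall strategy and takes a genuinely different route from the paper. The paper proves the statement by induction on $n$: given the terminal edges of an $(n+1)$-cube, it first reconstructs every facet containing the terminal vertex (each such facet is an $n$-cube whose terminal edges are $n$ of the given ones), thereby recovering all edges having at least one $1$ in their index, and then recovers each remaining edge $e_{0\cdots 0\star 0\cdots 0}$ by applying the induction hypothesis a second time, to a facet of the form $\{v : v_{k}=0\}$ whose terminal edges were produced in the first stage. Your argument replaces this two-stage induction with a direct global computation: a potential $p(v)$ at each vertex, identified as the join $\bigvee_{i\in I(v)} x_{i}$, from which every edge label is recovered. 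This yields an explicit closed formula for each edge in terms of joins, inverses and products of the $x_{i}$'s, which is sharper than what the paper's induction makes visible (the paper records this expressibility only as a closing remark). One caveat on citation: the identity $p(v)=\bigvee_{i\in I(v)}x_{i}$ is not literally the statement of Proposition \ref{wroandcube}, which only identifies $\Phi_{\bigvee x_{i}}$; it follows from the collapsing argument in its proof, i.e.\ from repeated application of Proposition \ref{wroandsquare}, as the paper itself notes in the paragraph following Proposition \ref{wroandcube}.

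There is, however, one concrete error to fix. With the paper's conventions, in a Coxeter square $(w,x,y,z)$ the composite along the path that traverses $x$ first and $w$ second is $wx$: later edges multiply on the \emph{left}. For $p(v)$ to be well defined you must use this convention, since the traversal-order product is not invariant under moving a path across a $2$-face: in the $A_{2}$ square $(s_{1}s_{2},\, s_{1},\, s_{2},\, s_{1}s_{2})$ one has $wx=yz$ but $xw = s_{2} \neq s_{1} = zy$. Under the correct convention, an edge $g$ from $v$ to $v'$ satisfies $p(v)=p(v')\,g$, so its label is recovered as $g = p(v')^{-1}p(v)$, not $p(v)\,p(v')^{-1}$ as you wrote; the two genuinely differ (in the $A_{2}$ example above, the left edge is $x_{1}^{-1}(x_{1}\vee x_{2}) = s_{1}$, whereas $(x_{1}\vee x_{2})\,x_{1}^{-1} = s_{2}$). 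As written, your two displayed claims, well-definedness of $p$ and the relation $p(v) = g\cdot p(v')$, cannot hold simultaneously. The fix is purely notational and does not affect your conclusion, since the label is determined by the two potentials either way, but the formula should be corrected before the argument is recorded.
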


\begin{proof} 
The statement is trivial for the case $n=1$. Note that for the case $n=2$, we desire to show that a Coxeter square is uniquely determined by its terminal edges. Suppose that $ x_{1}$ and $ x_{2}$ are the terminal edges of the following Coxeter square: 
    \[ \begin{tikzcd}
\sbullet \arrow{rr}{ x_{1}} & &\sbullet  \\
\\
\sbullet \arrow{uu} \arrow{rr}& &\sbullet \arrow{uu}[swap]{x_{2}}
\end{tikzcd}
\]
But by Proposition \ref{wroandsquare}, we can deduce fill in the other two edges: 
\[ \begin{tikzcd}
\sbullet \arrow{rr}{ x_{1}} & &\sbullet  \\
\\
\sbullet \arrow{uu}{x_{1}^{-1}(x_{1}\vee x_{2})} \arrow{rr}[swap]{x_{2}^{-1}(x_{1}\vee x_{2})}& &\sbullet \arrow{uu}[swap]{x_{2}}
\end{tikzcd}
\]
This shows that any Coxeter square is uniquely determined by its terminal edges. 

To consider the case for general $n$, we use induction. We assume that given the terminal edges $ x_{1} , x_{2} , \dots , x_{n}$ of a Coxeter $n$-cube, we can uniquely reconstruct the rest of the Coxeter $n$-cube.

Suppose now that $ y_{1} , y_{2} , \dots , y_{n}, y_{n+1}$ are the terminal edges of a Coxeter $ (n+1)$-cube. Without loss of generality, assume that $ y_{1} , y_{2} , \dots , y_{n}, y_{n+1}$ correspond to the edge labels $ e_{\star 1 \dots 1}$, $ e_{1\star 1\dots 1}$, $\dots$, $ e_{1 \dots 1 \star 1}$, $ e_{1 \dots 1\star}$. For each $1\leq  j \leq n+1$, consider the following subset

$$ \{ e_{\star 1 \dots 1}, e_{1\star 1\dots 1} , \dots , \widehat{e_{1 \dots 1\star 1 \dots 1}}, \dots , e_{1 \dots 1 \star 1} ,  e_{1 \dots 1\star}\}$$
where $ \widehat{e_{1 \dots 1\star 1 \dots 1}}$ denotes omission of the oriented edge with a $\star$ at the $j$-th index. The above subset forms the terminal edges of a Coxeter $n$-cube. Hence, we can apply the induction hypothesis to reconstruct all of the edges of the $n$-cube defined by the above subset. Since we omitted the edge with a $\star$ at the $j$-th index, we can reconstruct all edges of the form $ e_{a_{1}a_{2} \dots a_{j-1}1a_{j+1} \dots a_{n}a_{n+1}}$ where $ a_{i} \in \{ 0,1,\star \}$ for all $i \neq j$. As $j$ varies from $1$ to $ n+1$, we can repeatedly apply the induction hypothesis to reconstruct all edges of the form $ e_{1a_{2} \dots a_{n+1}}$, all edges of the form $e_{a_{1}1a_{3} \dots a_{n+1}} $, all edges of the form $ e_{a_{1}a_{2}1a_{4} \dots a_{n+1}}$ and so on until we've reconstructed all edges of the form $ e_{a_{1}a_{2} \dots a_{n}1}$. Thus, we have reconstructed all edges that have at least one $1$ as one of their indices. We just need to show that we can reconstruct an edge of the form $ e_{0 \dots 0 \star 0 \dots 0}$ where the $\star$ is in the $j$-th location.

Consider $e_{0 \dots 0 \star 0 \dots 0} $ where the $\star$ is in the $j$-th location. Let $k$ be a positive integer such that $ 1 \leq k \leq n+1$ and $ k \neq j$. Consider the vertex of the $(n+1)$-cube that is labeled $ 1\dots 101 \dots 1$ where the $ 0$ is in the $k$-th location. By the previous paragraph, we have already reconstructed all of the edges that point towards and are incident to the point  $ 1\dots 101 \dots 1$. These edges are of the form $ e_{\star 1 \dots 101 \dots 1}$,  $\dots$, $e_{ 1 \dots 1\star01 \dots 1} $, $ e_{ 1 \dots 10\star1 \dots 1}$, $ \dots$, $ e_{ 1 \dots 101 \dots 1\star}$. Note that there are exactly $ n$ of these oriented edges that point towards and are incident to the point $ 1\dots 101 \dots 1$. By the induction hypothesis, the $n$ edges $ e_{\star 1 \dots 101 \dots 1}$,  $\dots$, $e_{ 1 \dots 1\star01 \dots 1} $, $ e_{ 1 \dots 10\star1 \dots 1}$, $ \dots$, $ e_{ 1 \dots 101 \dots 1\star}$ determine a Coxeter $n$-cube whose terminal vertex is the point $1\dots 101 \dots 1$. Hence, we can reconstruct all of the edges of the Coxeter $n$-cube associated to the point $ 1\dots 101 \dots 1$. Since the zero of $1\dots 101 \dots 1$ occurs at the $k$-th location, it follows that we will have reconstucted all edges of the form $ e_{a_{1} \dots a_{k-1} 0 a_{k+1} \dots a_{n+1}}$ where $ a_{i} \in \{ 0,1,\star \}$ for $ i \neq k$. Since $k \neq j$, we will in particular have reconstructed the edge $e_{0 \dots 0 \star 0 \dots 0} $ where the $\star$ is in the $j$-th location. This shows that we can reconstruct any edge of a Coxeter $ (n+1)$-cube given the terminal edges $ y_{1}, y_{2} , \dots y_{n}, y_{n+1}$. By induction, for any given $n$, we can reconstruct a Coxeter $n$-cube from its terminal edges. In particular, if $ x_{1}, x_{2} , \dots , x_{n}$ are the terminal edges of the Coxeter $n$-cube $X$, it follows that any edge of $X$ can be expressed as some combination of the $x_{i}$'s by taking joins in weak order, taking inverses, and taking compositions.
\end{proof}

\begin{cor} \label{oneless}
    Let $ (W,S)$ be a finite Coxeter system with $ |S| = n$. Let $ X$ be a Coxeter $n$-cube in $ (W,S)$. Let $ \{ x_{1} , x_{2} , \dots, x_{n} \}$ be the set of terminal edges of $X$. If $Q \seq \{ x_{1} , x_{2} , \dots, x_{n} \}$ such that $ |Q| = n-1$, then $ Q$ uniquely determines $X$.
\end{cor}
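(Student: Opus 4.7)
The plan is to show that the missing terminal edge is recoverable from the other $n-1$, and then invoke Proposition \ref{cubedetermined} to recover the entire cube.

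Without loss of generality, suppose $Q = \{x_1, x_2, \dots, x_{n-1}\}$, so that $x_n$ is the omitted terminal edge. Since $X$ exists as a Coxeter $n$-cube in the finite Coxeter system $(W,S)$ with $|S|=n$, Proposition \ref{wroandcube} applies and gives the partition
$$ \Phi^{+} = \coprod_{i=1}^{n} \Phi_{x_{i}}. $$
Rearranging, $\Phi_{x_n} = \Phi^{+} \setminus \bigcup_{i=1}^{n-1} \Phi_{x_i}$, so the left inversion set of $x_n$ is completely determined by the data in $Q$ (together with the ambient root system $\Phi^{+}$).

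The next step is to argue that $x_n$ itself is determined by $\Phi_{x_n}$. This is immediate from Proposition \ref{equivalenceofwro}: the correspondence $w \mapsto \Phi_w$ is injective, since $\Phi_x = \Phi_y$ forces both $x \wro y$ and $y \wro x$, and $\wro$ is a partial order. Thus $x_n$ is uniquely recovered.

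Finally, with all $n$ terminal edges $x_1, x_2, \dots, x_n$ in hand, Proposition \ref{cubedetermined} reconstructs the full Coxeter $n$-cube $X$. There is no serious obstacle in this argument; the only subtle point is confirming that $x_n$ is genuinely recoverable from its inversion set, which is handled by the injectivity of $w \mapsto \Phi_w$.
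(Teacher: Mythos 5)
Your proof is correct and follows essentially the same route as the paper: invoke Proposition \ref{wroandcube} to get the partition $\Phi^{+} = \coprod_{i=1}^{n}\Phi_{x_{i}}$, recover $\Phi_{x_{n}}$ as the complement of the known inversion sets, and then apply Proposition \ref{cubedetermined}. The only difference is that you spell out why $\Phi_{x_{n}}$ determines $x_{n}$ (injectivity of $w \mapsto \Phi_{w}$ via antisymmetry of the weak order), a step the paper simply asserts.
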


\begin{proof}
    Without loss of generality, assume that $ Q = \{ x_{1}, x_{2} , \dots , x_{n-1} \}$. Since we are assuming that $ |W| < \infty$ and $ |S| = n$, we can invoke Proposition \ref{wroandcube} to deduce that 

    $$ \Phi^{+} \sm (\coprod_{i=1}^{n-1} \Phi_{x_{i}}) 
 = \Phi_{x_{n}}$$
 But since $ \Phi_{x_{n}}$ uniquely determines $ x_{n}$, we have used $Q$ to reconstruct the entire set of terminal edges $ \{ x_{1}, x_{2} , \dots ,x_{n} \}$. Thus, by Proposition \ref{cubedetermined}, we can then determine the rest of the Coxeter $n$-cube.
\end{proof}

\begin{prop} \label{depth2A_{n}}
    Consider the $ A_{n}$ Coxeter system with set of Coxeter generators $ \{ s_{1}, s_{2}  ,  \dots , s_{n} \}$. Let $ x_{1} , x_{2} , \dots , x_{n}$ be the terminal edges of some Coxeter $n$-cube in $A_{n}$. Then there exists some $ i$ such that $ x_{i} \in S$.
\end{prop}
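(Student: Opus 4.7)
The plan is to prove the statement by induction on $n$, exploiting the combinatorial structure of $A_n$. I identify positive roots with pairs via $\alpha_a + \alpha_{a+1} + \cdots + \alpha_{b-1} \leftrightarrow (a,b)$ for $1 \le a < b \le n+1$, so that simple roots correspond to the pairs $(k, k+1)$. By Proposition \ref{wroandcube} and the pigeonhole argument in its proof, the $\Phi_{x_i}$ partition $\Phi^+$ with each $\Phi_{x_i}$ containing exactly one simple root; after relabeling, assume $\alpha_i$ is the unique simple root in $\Phi_{x_i}$. I would actually prove a slightly stronger statement by induction on $n$: any partition of $\Phi^+_{A_n}$ into biclosed sets $L_1, \ldots, L_n$ with $L_i$ containing $\alpha_i$ as its unique simple root must contain a singleton. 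The base case $n=1$ is immediate, since then $L_1 = \Phi^+ = \{\alpha_1\}$.

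The crucial structural observation is the following: if $L \subseteq \Phi^+_{A_n}$ is biclosed and contains exactly one simple root $\alpha_i$, then every pair $(a,b) \in L$ satisfies $a \le i < b$. This follows by iterating co-closedness on the telescoping decomposition $\alpha_{a,b-1} = \alpha_a + \alpha_{a+1} + \cdots + \alpha_{b-1}$: some simple summand must belong to $L$, and the only candidate is $\alpha_i$. For the inductive step, consider the highest root $(1, n+1)$ and its unique host $\Phi_{x_{k_0}}$. Applying co-closedness to the decomposition $(1, n+1) = (1, c) + (c, n+1)$ together with the structural observation shows that exactly one of the two summands has the correct index range to be allowed in $\Phi_{x_{k_0}}$, yielding $(1, c) \in \Phi_{x_{k_0}}$ for $c > k_0$ and $(c, n+1) \in \Phi_{x_{k_0}}$ for $c \le k_0$. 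A second round of the same argument, now applied to each right-column pair $(c, n+1)$ via the decomposition $(c, n+1) = (c, b) + (b, n+1)$, fills in the interior and gives $\Phi_{x_{k_0}} = \{(a,b) : a \le k_0 < b\}$, the full ``crossing rectangle.''

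By disjointness and the structural observation, each $\Phi_{x_k}$ with $k < k_0$ lies in the positive root system of the parabolic $A_{k_0-1}$ on $\{s_1, \ldots, s_{k_0-1}\}$, and each $\Phi_{x_k}$ with $k > k_0$ lies in the positive root system of the parabolic $A_{n-k_0}$ on $\{s_{k_0+1}, \ldots, s_n\}$; moreover these sub-collections exactly partition the corresponding parabolic root systems. A short computation confirms that any element of $W_{A_n}$ whose left inversion set lies entirely in a parabolic root system lies in that parabolic subgroup itself, so the hypotheses of the stronger statement transfer verbatim to the smaller sub-systems. Whenever $n \ge 2$, at least one of $A_{k_0-1}$ and $A_{n-k_0}$ is nontrivial, so the inductive hypothesis produces a singleton $\Phi_{x_k} = \{\alpha_k\}$, hence an $x_k \in S$. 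The main obstacle is the rectangle claim in the second paragraph: its proof via two rounds of co-closedness combined with the index-range restriction is the technical heart of the argument, and everything else reduces to bookkeeping between the ambient system and its parabolic sub-systems.
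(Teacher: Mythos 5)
Your proof is correct, but it takes a genuinely different and considerably longer route than the paper's. The paper's own argument is a short pigeonhole on depth: if every $x_{i}$ had $\ell(x_{i}) \geq 2$, then each $\Phi_{x_{i}}$ (which contains exactly one simple root, by the proof of Proposition \ref{wroandcube}) would also have to contain a positive root of depth $2$, and disjointness would then force $n$ distinct depth-$2$ roots; but the $A_{n}$ root poset has only $n-1$ of them, namely the roots $\alpha_{i}+\alpha_{i+1}$, a contradiction. You instead prove a stronger statement by induction: using co-closedness you show that the set containing the highest root must be the full crossing rectangle $\{(a,b) \mid a \leq k_{0} < b\}$, that the remaining sets then partition the positive systems of the two complementary parabolic subsystems, and you conclude by the induction hypothesis applied to a nontrivial factor. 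This is essentially a re-derivation, in the language of biclosed sets, of what the paper establishes only later as Proposition \ref{highestbasedrectangle} and Proposition \ref{smallercubeinside} (and exploits in Theorem \ref{basedrectanglepartition}); your two-round co-closedness argument for the rectangle is a clean alternative to the support-based computation given there. What the paper's proof buys is brevity and independence from that later machinery; what yours buys is generality (it applies to any partition of $\Phi^{+}$ into biclosed sets each containing a unique simple root, not just to terminal edges of a Coxeter $n$-cube) and it produces the rectangle and parabolic-factorization structure as a by-product. Two points to tighten: you should record the standard one-line verification that left inversion sets are biclosed (apply $x^{-1}$ to a sum of two positive roots and use that a root which is a nonnegative combination of simple roots is positive), since the paper never states this fact; and your remark that an element whose inversion set lies in a parabolic root subsystem lies in that parabolic subgroup is not actually needed, since your stronger statement and its induction are purely about sets of roots.
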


\begin{proof}
    Suppose for the sake of contradiction that $ x_{i} \notin S$ for each $ i$. Since no edge of a Coxeter $ n$-cube can be equal to the identity (which follows from the definition of a Coxeter square), we deduce that $ \ell(x_{i}) \geq 2$ for all $i$. Furthermore, since each $\Phi_{x_{i}}$ must contain exactly one simple root (see proof of Proposition \ref{wroandcube}), $ \ell(x_{i}) \geq 2$ for all $i$ implies that $ \Phi_{x_{i}}$ must contain at least one positive root of depth 2 for each $i$ (see Definition 4.6.1 of \cite{bjorner2005combinatorics} for the definition of the depth of a positive root). But since the $ \Phi_{x_{i}}$'s are disjoint, this would imply that there exists at least $ n$ distinct positive roots of depth 2 in the root poset of $ A_{n}$. This contradicts the well-known \textquotedblleft pyramid" structure of the root poset for $A_{n}$ (see Figure 4.4 of \cite{bjorner2005combinatorics}). Hence, there must exist at least one $ i$ such that $ x_{i} \in S$. 
\end{proof}

\begin{prop}
    The only two Coxeter $3$-cubes that exist in the $A_{3}$ Coxeter system modulo orientation are the two cubes given in Example \ref{cubesexample}. Hence, $ \Cubes{3}{A_{3}}  =2$ for the $A_{3}$ Coxeter system.
\end{prop}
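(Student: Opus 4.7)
The plan is to enumerate all possible unordered triples of terminal edges of Coxeter $3$-cubes in $A_{3}$, and then to identify which of those triples coincide modulo reorientation.

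First I set up the constraints. By Proposition \ref{wroandcube}, the inversion sets $\Phi_{x_{1}}, \Phi_{x_{2}}, \Phi_{x_{3}}$ of the three terminal edges are pairwise disjoint with $\Phi^{+} = \Phi_{x_{1}} \coprod \Phi_{x_{2}} \coprod \Phi_{x_{3}}$, and each $\Phi_{x_{i}}$ contains exactly one simple root since $|\Pi| = 3$. Corollary \ref{summinglengths} gives $\ell(x_{1}) + \ell(x_{2}) + \ell(x_{3}) = \ell(w_{0}) = 6$, and Proposition \ref{depth2A_{n}} forces some $x_{i} \in S$. Therefore the unordered length multiset is either $(4,1,1)$ or $(3,2,1)$, since the only other partition of $6$ into three positive parts, $(2,2,2)$, has no part equal to $1$.

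In the $(4,1,1)$ case, the two simple reflections $s_{i}, s_{j}$ among the terminal edges force the length-$4$ edge to have inversion set $\Phi^{+} \setminus \{\alpha_{i}, \alpha_{j}\}$. For this set to be the left inversion set of an element of $W$, a direct inspection of the length-$4$ elements of $A_{3}$ (equivalently, a biclosedness check on the complement $\{\alpha_{i}, \alpha_{j}\}$) shows that $\{i, j\}$ must equal $\{1, 3\}$, since $\alpha_{1}+\alpha_{2}$ and $\alpha_{2}+\alpha_{3}$ lie in $\Phi^{+}$ while $\alpha_{1}+\alpha_{3}$ does not. In that case the length-$4$ element is $s_{2}s_{1}s_{3}s_{2}$ with inversion set $\{\alpha_{2}, \alpha_{1}+\alpha_{2}, \alpha_{2}+\alpha_{3}, \alpha_{1}+\alpha_{2}+\alpha_{3}\}$, and by Proposition \ref{cubedetermined} the triple $\{s_{1}, s_{3}, s_{2}s_{1}s_{3}s_{2}\}$ determines a unique cube, which is the second one of Example \ref{cubesexample}.

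In the $(3,2,1)$ case, I would enumerate how the three non-simple positive roots $\alpha_{1}+\alpha_{2}$, $\alpha_{2}+\alpha_{3}$, $\alpha_{1}+\alpha_{2}+\alpha_{3}$ can be distributed among the three $\Phi_{x_{i}}$'s so that each remains a valid left inversion set; this produces exactly four unordered triples of terminal edges, namely
\[ \{s_{3}, s_{2}s_{3}, s_{1}s_{2}s_{3}\}, \quad \{s_{2}, s_{3}s_{2}, s_{1}s_{2}s_{3}\}, \quad \{s_{2}, s_{1}s_{2}, s_{3}s_{2}s_{1}\}, \quad \{s_{1}, s_{2}s_{1}, s_{3}s_{2}s_{1}\}. \]
Starting from the first cube of Example \ref{cubesexample}, whose terminal edges form the first of these triples, I would verify that the other three arise from it by reorientations: flipping the four parallel edges carrying the length-$2$ terminal edge produces the second triple, flipping the four parallel edges carrying the length-$3$ terminal edge produces the third, and flipping all three parallel classes at once (reversing the orientation of the cube) produces the fourth. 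Since flipping collections of parallel edges is a reorientation, these four triples all lie in a single equivalence class. Combining the two cases gives $\Cubes{3}{A_{3}} = 2$.

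The main obstacle is the axis-flip verification in the last paragraph, which requires reading off from the fully labeled first cube of Example \ref{cubesexample} which vertex becomes the new terminal vertex after each flip and what the three new incident edge labels are; the rest of the argument is short bookkeeping with the six positive roots of $A_{3}$.
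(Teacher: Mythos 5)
Your proof is correct, and it follows the paper's overall outline---the length constraint from Corollary \ref{summinglengths}, the forced simple reflection from Proposition \ref{depth2A_{n}}, the case split into length multisets $(4,1,1)$ and $(3,2,1)$ (the paper phrases this as $\ell(x_{2})\in\{1,2\}$ after fixing $x_{1}\in S$), and the final matching against Example \ref{cubesexample}---but the mechanism you use to pin down each cube is genuinely different. The paper never determines all three terminal edges: in each case it identifies only \emph{two} of them, using the fact that two terminal edges of a Coxeter square force an element to carry a simple root to a simple root (yielding $\{s_{1},s_{3}\}$ in the first case, and the four pairs $(s_{1},s_{2}s_{1})$, $(s_{2},s_{1}s_{2})$, $(s_{2},s_{3}s_{2})$, $(s_{3},s_{2}s_{3})$ in the second), and then invokes Corollary \ref{oneless}, which says that in a finite system with $|S|=n$ any $n-1$ terminal edges already determine the cube. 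You instead determine all three terminal edges at once via the partition $\Phi^{+}=\coprod_{i}\Phi_{x_{i}}$ of Proposition \ref{wroandcube}, enumerate which partitions into left inversion sets can occur, and then apply Proposition \ref{cubedetermined}. Your route costs a somewhat larger enumeration (the four full triples rather than four pairs, plus the verification that $\Phi^{+}\setminus\{\alpha_{i},\alpha_{j}\}$ is a left inversion set only when $\{i,j\}=\{1,3\}$), but it buys explicitness---you name the third edge in every case and realize the reorientations as concrete parallel-class flips, where the paper leaves ``one can check'' to the reader---and it anticipates the based-rectangle-partition viewpoint developed later in Theorems \ref{basedrectanglepartition} and \ref{partitiontoncube}. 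One small caveat: the ``biclosedness'' criterion you appeal to is not established anywhere in the paper, so you should either rely on your stated alternative (direct inspection of the five length-$4$ elements of $A_{3}$, only one of which, namely $s_{2}s_{1}s_{3}s_{2}$, has exactly one simple root in its inversion set) or insert the elementary observation that a left inversion set and its complement in $\Phi^{+}$ are both closed under taking sums that are roots, which follows directly from the definition of $\Phi_{x}$.
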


\begin{proof}
    Let $ S = \{ s_{1}, s_{2} , s_{3} \} $ be the Coxeter generators for $ A_{3}$ where $ s_{1}s_{3} = s_{3}s_{1}$. Let $ X$ be a Coxeter $3$-cube. Let $ x_{1} , x_{2}, x_{3}$ be the terminal edges of $X$. By Proposition \ref{depth2A_{n}}, we know that there exists at least one $i$ such that $ x_{i} \in S$. Without loss of generality, assume that $ x_{1} \in S$. By Corollary \ref{summinglengths}, we know that $ 6 = \ell(x_{1}) + \ell(x_{2}) + \ell(x_{3})$ since $ \ell(w_{0}) = 6$ in the $A_{3}$ Coxeter system. Since $ x_{1} \in S$, we have $ \ell(x_{1}) = 1$. Thus,

    $$ 5 = \ell(x_{2}) + \ell(x_{3})$$
    where $ \ell(x_{2}) \geq 1$ and $ \ell(x_{3}) \geq 1$ since no edge of a Coxeter cube can be the identity. The above equation and the inequalities on $\ell(x_{2})$, $ \ell(x_{3})$ imply that $ \ell(x_{i}) \in \{ 1,2 \}$ for some $i$. Without loss of generality, let us assume that $ \ell(x_{2}) \in \{  1,2 \} $.

    Suppose first that $ \ell(x_{2}) =1$. Since $ \ell(x_{1}) =1$ and $ \ell(x_{2})  =1$, we will write $ x_{1} = s \in S$ and $ x_{2} = s' \in S$. Since $ s$ and $ s'$ are terminal edges of a Coxeter cube, $s$ and $ s'$ must be the terminal edges of a Coxeter square. But since $ s$ and $ s'$ are the terminal edges of a Coxeter square, it follows that there must exist some element $\alpha \in \Pi$ such that $ s(\alpha) = \alpha_{s'}$ where $ \alpha_{s'}$ is the simple root corresponding to $ s'$. In other words, we must have that $ s(\alpha_{s'}) = s^{-1}(\alpha_{s'}) \in \Pi$. But the only two simple reflections in $A_{3}$ with this property are $ s_{1}$ and $ s_{3}$. Thus, $ \{ s , s' \} = \{ s_{1} , s_{3} \} $. But since we have determined two of the terminal edges of a Coxeter $n$-cube, Corollary \ref{oneless} implies that we have determined all of $ X$. Thus, $ X$ must be equal to the cube on the right in Example \ref{cubesexample}.

    Suppose now that $ \ell(x_{2}) = 2$. Since $ \ell(x_{1}) = 1$ and $ \ell(x_{2}) = 2$, we write $ x_{1} = s\in S$ and $ x_{2} = s's''$ where $ s' , s''\in S$. We know that $ s$ and $ s's''$ must be the terminal edges of some Coxeter square. Thus, $ s''s'(\alpha_{s}) = (s's'')^{-1}(\alpha_{s}) \in \Pi$ where $ \alpha_{s}$ is the simple root corresponding to $s$. One can compute via casework that there are only 4 possibilities for $ s$ and $ s's''$:

    $$  s= s_{1}\textrm{ and } s's'' = s_{2}s_{1}$$

    $$  s= s_{2} \textrm{ and } s's'' = s_{1}s_{2}$$

    $$  s= s_{2}\textrm{ and } s's'' = s_{3}s_{2}$$

    $$  s= s_{3} \textrm{ and } s's'' = s_{2}s_{3}$$
    But one can check that all 4 pairs above occur as terminal edges of some reorientation of the cube on the left in Example \ref{cubesexample}. Hence, $ X$ must be some reorientation of the cube on the left in Example \ref{cubesexample} by Corollary \ref{oneless}.
\end{proof}

Let $ (W,S)$ be a Coxeter system. Let $ J , K \seq S$ be \emph{orthogonal subsets of simple reflections}, meaning that for all $ s\in J$, $ s' \in K$, we have that $ B(\alpha_{s} , \alpha_{s'}) = 0$ where $ \alpha_{s}$, $ \alpha_{s'}$ are the simple roots associated to $ s$ and $ s'$ respectively. Note that if $ J$ and $K$ are orthogonal subsets of simple reflections, then $ J\cap K = \emptyset$ (because $B(\alpha , \alpha) =1 $ for all $ \alpha \in \Pi$). If $ w \in W_{J} \sm \{ 1 \}$ and $w' \in W_{K} \sm \{ 1 \} $, then orthogonality of $ J$ and $K$ will imply that $ ww' = w'w$ and $ w(\Phi_{w'}) = \Phi_{w'}$. Hence, $ w$ and $ w'$ will be the terminal edges of the following Coxeter square: 

\[ \begin{tikzcd}
\sbullet \arrow{rr}{ w} & &\sbullet  \\
\\
\sbullet \arrow{uu}{w'} \arrow{rr}{w}& &\sbullet \arrow{uu}[swap]{w'}
\end{tikzcd}
\]
The above construction can be generalized further. Let $ J ,K \seq S$ be two subsets of orthogonal simple reflections. Let $X$ be a Coxeter $n$-cube in $(W_{J} ,J)$ with terminal edges $ x_{1} , x_{2} , \dots , x_{n}$, and let $Y$ be a Coxeter $ m$-cube in $ (W_{K} ,K)$ with terminal edges $ y_{1} , y_{2} , \dots y_{m}$. Then there exists a Coxeter $ (m+n)$-cube in $ (W_{J \cup K}, J \cup K)$ whose terminal edges are $ x_{1} , \dots, x_{n}, y_{1} , \dots y_{m}$. The Coxeter $(m+n)$-cube formed by this construction is called \emph{the product cube of $X$ and $Y$}. We denote the product cube of $ X$ and $ Y$ as $ \Prod(X,Y)$

\begin{prop} \label{productcubesnum}
    Let $ (W,S)$ be a Coxeter system. Let $ J , K \seq S$ be orthogonal subsets of simple reflections such that $ |W_{J}|< \infty$ and $ |W_{K}| < \infty$. Then 
    
    $$ \Cubes{|J| + |K|}{W_{J \cup K} ,J \cup K} = \Cubes{|J|}{W_{J} ,J} \Cubes{|K|}{W_{K} ,K} $$
    In other words, all of the Coxeter $(|J|+|K|)$-cubes that occur in $ (W_{J \cup K }, J \cup K)$ are product cubes.
    
\end{prop}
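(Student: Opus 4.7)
I would first unpack the direct-product structure forced by orthogonality of $J$ and $K$. Since $B(\alpha_s, \alpha_{s'}) = 0$ for all $s \in J$ and $s' \in K$, every $s \in J$ acts as the identity on $\Pi_K$ (and vice versa), so $W_J$ and $W_K$ centralize each other inside $W_{J \cup K}$ and this subgroup is their internal direct product. Consequently $\Phi^+_{J\cup K} = \Phi^+_J \coprod \Phi^+_K$, each element of $W_J$ fixes $\Phi^+_K$ pointwise and vice versa, and every $z \in W_{J \cup K}$ factors uniquely as $z = ab$ with $a \in W_J$, $b \in W_K$, satisfying $\ell(z) = \ell(a) + \ell(b)$ and $\Phi_z = \Phi_a \coprod \Phi_b$ with $\Phi_a \seq \Phi^+_J$ and $\Phi_b \seq \Phi^+_K$.

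Next, given any Coxeter $(|J|+|K|)$-cube $Z$ in $(W_{J \cup K}, J \cup K)$ with terminal edges $z_1, \dots, z_{|J|+|K|}$, Proposition \ref{wroandcube} gives $\Phi^+_{J\cup K} = \coprod_i \Phi_{z_i}$. Restricting to simple roots, these $|J|+|K|$ sets partition the $|J|+|K|$ elements of $\Pi_{J \cup K}$, and Proposition \ref{leftinversionset} guarantees each $\Phi_{z_i}$ contains at least one simple root; pigeonhole then forces each $\Phi_{z_i}$ to contain \emph{exactly} one simple root. Combining this with $\Phi_{z_i} = \Phi_{a_i} \coprod \Phi_{b_i}$ (where $z_i = a_i b_i$) and the fact that the left inversion set of any non-identity element contains at least one simple root, exactly one of $a_i, b_i$ must equal the identity. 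Hence every terminal edge $z_i$ lies in $W_J$ or in $W_K$.

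I would then split $\{1, \dots, |J|+|K|\}$ as $I_J \coprod I_K$ according to which factor contains $z_i$; counting the $|J|$ simple roots of $\Pi_J$ and the $|K|$ simple roots of $\Pi_K$ appearing among the $\Phi_{z_i}$'s shows $|I_J| = |J|$ and $|I_K| = |K|$. The sub-diagram of $Z$ obtained by restricting to the coordinate directions indexed by $I_J$ (with the other coordinates fixed at the terminal vertex) is a Coxeter $|J|$-cube $X$: its $2$-dimensional faces are $2$-dimensional faces of $Z$ and hence Coxeter squares, and by Proposition \ref{cubedetermined} every edge of $X$ is a join, inverse, or composition of its terminal edges, all of which lie in $W_J$. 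Thus $X$ is a Coxeter $|J|$-cube in $(W_J, J)$; symmetrically we obtain a Coxeter $|K|$-cube $Y$ in $(W_K, K)$. The product cube $\Prod(X, Y)$ has the same terminal edges as $Z$, so Proposition \ref{cubedetermined} forces $Z = \Prod(X, Y)$.

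The assignments $Z \mapsto (X, Y)$ and $(X, Y) \mapsto \Prod(X, Y)$ are therefore mutually inverse, and each commutes with reorientation (a reorientation of $Z$ preserves the partition of terminal edges between $W_J$ and $W_K$ and induces reorientations of both factors, while reorientations of $X$ and $Y$ assemble into a reorientation of their product). Passing to equivalence classes yields the desired bijection, giving $\Cubes{|J|+|K|}{W_{J\cup K}, J\cup K} = \Cubes{|J|}{W_J, J} \cdot \Cubes{|K|}{W_K, K}$. The principal obstacle is the pigeonhole step in the second paragraph: the decomposition of $\Phi^+$ into disjoint terminal left inversion sets, the fact that each such set contains at least one simple root, and the orthogonality of $J$ and $K$ must all conspire to force every terminal edge to respect the direct-product split; once this is secured, the rest is bookkeeping via Proposition \ref{cubedetermined}.
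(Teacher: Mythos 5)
Your proof is correct and follows essentially the same route as the paper's: both reduce to showing that each terminal edge $z_i$ lies in $W_J$ or $W_K$ by combining the decomposition $\Phi^+_{J\cup K} = \coprod_i \Phi_{z_i}$ from Proposition \ref{wroandcube} with a pigeonhole count of the $|J|+|K|$ simple roots against disjoint, non-empty left inversion sets, using the direct-product factorization $z = ab$ forced by orthogonality. Your concluding bookkeeping (the exact counts $|I_J|=|J|$ and $|I_K|=|K|$, the reconstruction of $Z=\Prod(X,Y)$ via Proposition \ref{cubedetermined}, and the compatibility with reorientation) is spelled out more explicitly than in the paper, which at that point simply invokes Proposition \ref{highercubesdont} and asserts the conclusion, but the core argument is identical.
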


\begin{proof}
    Let $ Z$ be a Coxeter $ (|J| +|K|)$-cube in $ (W_{J \cup K} , J \cup K)$. Let $ z_{1} , z_{2} \dots , \dots , z_{|J|+|K|}$ be the terminal edges of $ Z$. It suffice to show that for each $ i$, $ z_{i} \in W_{J}$ or $z_{i} \in W_{K}$. Once we have shown this, Proposition \ref{highercubesdont} then implies that exactly $ |J|$ of the $z_{i}$'s lie in $ W_{J}$ and the other $ |K|$ of the $z_{i}$'s lie in $ W_{K}$. Hence, $ Z = \Prod(X,Y)$ where $ X$ is a $|J|$-cube in $ W_{J}$ and $ Y$ is a $|K|$-cube in $W_{K}$.

    Since $ J$ and $K$ are orthogonal subsets of simple reflections, it follows that $ W_{J \cup K} = W_{J} \times W_{K}$. Hence, $ |W_{J \cup K}| = |W_{J}||W_{K}| < \infty$. Therefore, we can apply Proposition \ref{wroandcube} to the Coxeter system $ (W_{J \cup K}, J \cup K)$ to deduce that 

    $$ \Phi^{+}_{J \cup K} = \coprod_{i=1}^{|J| +|K|}\Phi_{z_{i}} $$
    Suppose for the sake of contradiction that there existed a $j$ such that $ z_{j} \notin W_{J}$ and $ z_{j} \notin W_{K}$. Since $W_{J \cup K} = W_{J} \times W_{K} $, this would imply that $ z_{j} = w w'$ where $ w \in W_{J} \sm \{ 1 \} $ and $ w' \in W_{K} \sm \{ 1 \}$. Note that since $ W_{J \cup K} = W_{J} \times W_{K}$, we would have that 

    $$ \Phi_{z_{j}} = \Phi_{ww'} = \Phi_{w} \coprod \Phi_{w'}$$
    Hence, we get:

    $$ \Phi^{+}_{J \cup K} = \Phi_{z_{j}}  \coprod  (\coprod_{i \neq j} \Phi_{z_{i}})$$

    $$ \Phi^{+}_{J \cup K} = (\Phi_{w} \coprod \Phi_{w'}) \coprod  (\coprod_{i \neq j} \Phi_{z_{i}})$$
    Note that there are $|J| + |K| + 1$ non-empty left inversion sets on the right-hand side in the above equation. But we know that each left inversion set above has to contain at least one simple reflection from $\Pi_{J \cup K}$. Since $|J| + |K| + 1 > |J| +|K|$, we deduce that there has to exist at least two left inversion sets in the above equation that contain the same simple root. This contradictions the pairwise disjointness of the above left inversion sets.  
\end{proof}

\begin{prop} \label{highestbasedrectangle}
    Let $ X$ be a Coxeter $n$-cube in $ A_{n}$. Let $ x_{1} , x_{2} , \dots , x_{n}$ be the terminal edges of $X$. Let $ \beta \in \Phi^{+}$ denote the highest root in the root poset of $A_{n}$. Then the following are true:

    \begin{enumerate}
        \item There exists a unique $i$ such that $ \beta \in \Phi_{x_{i}}$.

        \item Let $i $ be as in statement (1). Let $ \alpha \in \Pi \cap \Phi_{x_{i}}$ denote the unique simple root in $ \Phi_{x_{i}}$. Then:

        $$ \Phi_{x_{i}} = \{ \gamma \in \Phi^{+} \mid \alpha \in \supp(\gamma) \}   $$
        where 
        
        $$ \supp(\gamma) : = \{ \rho \in \Pi \mid \exists c\in \mathbb{R} \textnormal{ with } c\neq 0 \textnormal{ and } \gamma \in c\rho +\Span(\Pi \sm \{ \rho \}) \}$$
    \end{enumerate}
    
\end{prop}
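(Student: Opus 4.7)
The plan is to deduce (1) immediately from Proposition \ref{wroandcube} and to prove (2) by exploiting the following elementary coclosedness fact about left inversion sets: if $\gamma_1, \gamma_2 \in \Phi^+ \setminus \Phi_{x_i}$ and $\gamma_1 + \gamma_2 \in \Phi^+$, then $\gamma_1 + \gamma_2 \in \Phi^+ \setminus \Phi_{x_i}$. This is immediate from the definition, since $x_i^{-1}(\gamma_1), x_i^{-1}(\gamma_2) \in \Phi^+$ implies $x_i^{-1}(\gamma_1+\gamma_2) = x_i^{-1}(\gamma_1) + x_i^{-1}(\gamma_2) \in \Phi \cap \cone(\Pi) = \Phi^+$.

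For (1), since $\Phi^+ = \coprod_{i=1}^n \Phi_{x_i}$ by Proposition \ref{wroandcube}, the highest root $\beta$ lies in exactly one $\Phi_{x_i}$. For (2), I first recall, as noted in the proof of Proposition \ref{wroandcube}, that each $\Phi_{x_j}$ contains at least one simple root; pairwise disjointness of the $\Phi_{x_j}$'s together with $|\Pi| = n$ then forces each to contain exactly one simple root. Let $\alpha = \alpha_k$ denote the unique simple root in $\Phi_{x_i}$. Recall further that the positive roots of $A_n$ are precisely the sums $\alpha_{j,l} := \alpha_j + \alpha_{j+1} + \cdots + \alpha_l$ for $1 \le j \le l \le n$, with highest root $\beta = \alpha_{1,n}$.

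For the inclusion $\Phi_{x_i} \subseteq \{\gamma \in \Phi^+ : \alpha_k \in \supp(\gamma)\}$, I would argue by contradiction: suppose $\alpha_{j,l} \in \Phi_{x_i}$ with $\alpha_k \notin \supp(\alpha_{j,l})$. Then none of $\alpha_j, \alpha_{j+1}, \ldots, \alpha_l$ equals $\alpha_k$, so each lies in $\Phi^+ \setminus \Phi_{x_i}$. Iterated application of coclosedness to the partial sums $\alpha_j + \alpha_{j+1}$, $(\alpha_j + \alpha_{j+1}) + \alpha_{j+2}$, and so on, yields $\alpha_{j,l} \in \Phi^+ \setminus \Phi_{x_i}$, a contradiction.

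For the reverse inclusion $\{\gamma \in \Phi^+ : \alpha_k \in \supp(\gamma)\} \subseteq \Phi_{x_i}$, let $\alpha_{j,l}$ with $j \le k \le l$ and decompose $\beta = \alpha_{1,j-1} + \alpha_{j,l} + \alpha_{l+1,n}$, where $\alpha_{1,j-1}$ is omitted if $j = 1$ and $\alpha_{l+1,n}$ is omitted if $l = n$. By the inclusion just established, neither $\alpha_{1,j-1}$ nor $\alpha_{l+1,n}$ lies in $\Phi_{x_i}$, since neither contains $\alpha_k$ in its support. The contrapositive of coclosedness applied first to $\beta = \alpha_{1,l} + \alpha_{l+1,n}$ forces $\alpha_{1,l} \in \Phi_{x_i}$, and a second application to $\alpha_{1,l} = \alpha_{1,j-1} + \alpha_{j,l}$ yields $\alpha_{j,l} \in \Phi_{x_i}$. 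The main obstacle is careful bookkeeping in the boundary cases $j = 1$ and $l = n$; once the coclosedness observation is in hand, the argument runs cleanly because the positive roots of $A_n$ are precisely the contiguous segments of simple roots.
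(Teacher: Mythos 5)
Your proof is correct, and at bottom it runs on the same engine as the paper's: linearity of $x_{i}^{-1}$, the fact that a root lying in $\cone(\Pi)$ must be positive, and the observation that $\alpha$ is the \emph{only} simple root in $\Phi_{x_{i}}$. The difference is in organization and in the decompositions used. The paper proves each inclusion independently and decomposes everything into \emph{simple} roots: for $\alpha \in \supp(\gamma) \Rightarrow \gamma \in \Phi_{x_{i}}$ it writes $\beta = \gamma + \sum_{\rho \in \Pi \sm \supp(\gamma)} \rho$ and applies $x_{i}^{-1}$, and for the converse it writes $\gamma$ itself as a combination of simple roots distinct from $\alpha$; no case analysis is needed. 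You instead extract the computation into a single coclosedness lemma (the complement $\Phi^{+} \sm \Phi_{x_{i}}$ is closed under root addition), prove $\Phi_{x_{i}} \seq \{\gamma \mid \alpha \in \supp(\gamma)\}$ by iterating it along the contiguous segment roots of $A_{n}$, and then get the reverse inclusion from the contrapositive of the lemma by peeling the two segment roots $\alpha_{1,j-1}$ and $\alpha_{l+1,n}$ off $\beta$ --- a step which, unlike the paper's, logically depends on the inclusion you proved first. Your packaging buys a clean, reusable statement (coclosedness of inversion sets, a standard fact you verify from the definitions given in the paper) and makes the role of the $A_{n}$ segment structure explicit; the paper's version buys two self-contained directions and avoids the boundary bookkeeping at $j=1$ and $l=n$, which you correctly flag and handle. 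Both arguments are complete and valid.
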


\begin{proof}
    The existence of such an $i$ in part (1) follows from Proposition \ref{wroandcube}. Similarly, the existence and uniqueness of a simple root $ \alpha \in \Pi \cap \Phi_{x_{i}}$ also follows from Proposition \ref{wroandcube}. Let $ \gamma \in \Phi^{+}$ such that $\alpha \in \supp(\gamma)$. Note that the root poset of $ A_{n}$ is a \textquotedblleft pyramid" shape. An explicit diagram of the root poset of $ A_{4}$ is given below: 
    \begin{center}
    \begin{tikzpicture}
\foreach  \y in {0,...,3}
    \foreach \x in {0,...,\y} 
        \fill [black] ( 3*\x -1.5*\y , -1.5*\y) circle [radius=0.1];

\node[below = 7pt] at (-4.5,-4.5) {$\alpha_{1}$};
\node[below = 7pt] at (-1.5,-4.5) {$\alpha_{2}$};
\node[below = 7pt] at (1.5,-4.5) {$\alpha_{3}$};
\node[below = 7pt] at (4.5,-4.5) {$\alpha_{4}$};

\node[below = 7pt] at (-3,-3) {$\alpha_{1} + \alpha_{2}$};
\node[below = 7pt] at (0,-3) {$\alpha_{2} + \alpha_{3}$};
\node[below = 7pt] at (3,-3) {$\alpha_{3} + \alpha_{4}$};

\node[below = 7pt] at (-1.5,-1.5) {$\alpha_{1} + \alpha_{2}+ \alpha_{3}$};
\node[below = 7pt] at (1.5,-1.5) {$\alpha_{2} + \alpha_{3}+ \alpha_{4}$};
\node[below = 7pt] at (0,0) {$\alpha_{1} + \alpha_{2}+ \alpha_{3}+ \alpha_{4}$};

\end{tikzpicture}
\end{center}
    By analyzing the structure of the root poset of $ A_{n}$, we see that

    $$ \beta = \gamma + \sum_{\rho \in \Pi \sm \supp(\gamma) } \rho  $$
    Suppose for the sake of contradiction that $ \gamma \notin \Phi_{x_{i}}$. This would imply that $ x_{i}^{-1}(\gamma) \in \Phi^{+}$. Since $ \alpha$ is the unique simple root in $ \Phi_{x_{i}}$, and because $ \alpha \in \supp(\gamma)$, it follows that $ x_{i}^{-1}(\rho) \in \Phi^{+}$ for all $ \rho \in \Pi \sm \supp(\gamma)$. Hence:

    $$ x_{i}^{-1} (\beta) = x_{i}^{-1}\big(  \gamma + \sum_{\rho \in \Pi \sm \supp(\gamma) } \rho \big) = x_{i}^{-1}(\gamma) + \sum_{\rho \in \Pi \sm \supp(\gamma)}x_{i}^{-1}(\rho) $$
    where the right hand side is a sum of positive roots. Hence, $ x_{i}^{-1}(\beta) \in \Phi^{+}$, which contradicts $ \beta \in \Phi_{x_{i}}$.

    If  $ \gamma \in \Phi^{+}$ such that $ \alpha \notin \supp(\gamma)$, then we could express $\gamma$ as a linear combination of simple roots from the set $ \Pi \sm \{ \alpha \} $. Since $ \alpha$ is the unique simple root that lies in $ \Phi_{x_{i}}$, it would follow that $ x_{i}^{-1}(\gamma)$ would be a sum of positive roots, and thus $ x_{i}^{-1}(\gamma)$ would have to be a positive root. Hence, $ \gamma \notin \Phi_{x_{i}}$. This establishes that

    $$  \Phi_{x_{i}} = \{ \gamma \in \Phi^{+} \mid \alpha \in \supp(\gamma) \}$$

\end{proof}

\begin{prop} \label{smallercubeinside}
    Let $ X$ be a Coxeter $n$-cube coming from the $ A_{n}$ Coxeter system. Then $ X$ contains a Coxeter $ (n-1)$-cube that is fully contained in a proper standard parabolic subgroup of the $A_{n}$ Coxeter system. 
\end{prop}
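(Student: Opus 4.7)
The plan is to identify the correct $(n-1)$-dimensional face of $X$ using the position of the highest root among the terminal edges, and then to show that this face is a Coxeter $(n-1)$-cube whose edges all lie in a single proper standard parabolic subgroup.

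Write $x_{1},\ldots,x_{n}$ for the terminal edges of $X$ and let $\beta\in\Phi^{+}$ denote the highest root of $A_{n}$. Proposition \ref{highestbasedrectangle} produces a unique index $i$ with $\beta\in\Phi_{x_{i}}$, and letting $\alpha$ be the unique simple root in $\Pi\cap\Phi_{x_{i}}$, it identifies $\Phi_{x_{i}}=\{\gamma\in\Phi^{+}:\alpha\in\supp(\gamma)\}$. Let $s_{\alpha}\in S$ be the simple reflection corresponding to $\alpha$, and set $J:=S\setminus\{s_{\alpha}\}$, a proper subset of $S$. From the $A_{n}$ root system one has $\Phi_{J}^{+}=\{\gamma\in\Phi^{+}:\alpha\notin\supp(\gamma)\}$, which together with $\Phi^{+}=\coprod_{j}\Phi_{x_{j}}$ from Proposition \ref{wroandcube} gives
$$\coprod_{j\neq i}\Phi_{x_{j}}=\Phi^{+}\setminus\Phi_{x_{i}}=\Phi_{J}^{+}.$$

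Now consider the $(n-1)$-dimensional face $X'$ of $X$ obtained by fixing the $i$-th cube coordinate equal to $1$. Because $X'$ contains the terminal vertex $w_{0}$ of $X$, its terminal edges (those incident to $w_{0}$ within $X'$) are precisely $\{x_{j}:j\neq i\}$, and every $2$-face of $X'$ is inherited from $X$, so $X'$ is itself a Coxeter $(n-1)$-cube. Applying Proposition \ref{wroandcube} to $X'$, the element $v:=\bigvee_{j\neq i}x_{j}$ satisfies $\Phi_{v}=\Phi_{J}^{+}$, so $v$ is the longest element $w_{0,J}$ of $W_{J}$; in particular $\supp(v)=J$.

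To promote the conclusion $v\in W_{J}$ to every edge of $X'$, right-translate $X'$ so that its initial vertex becomes $1$; since edges act by left multiplication this leaves every edge label unchanged and produces a Coxeter $(n-1)$-cube $X''$ with initial vertex $1$ and terminal vertex $v$. For any vertex $u$ of $X''$ there is a path of cube edges from $u$ to $v$, and by Proposition \ref{lengthsadd} applied to each successive edge the composition along this path gives $v=hu$ with $\ell(v)=\ell(h)+\ell(u)$. Concatenating a reduced expression of $h$ with one of $u$ therefore produces a reduced expression of $v$ using only the simple reflections in $\supp(h)\cup\supp(u)$, so $\supp(u)\subseteq\supp(v)=J$ and hence $u\in W_{J}$. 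Every edge label of $X''$ is a product $u'u^{-1}$ of two adjacent vertices in the subgroup $W_{J}$ and so lies in $W_{J}$; since $X''$ and $X'$ have the same edge labels, $X'$ is a Coxeter $(n-1)$-cube fully contained in the proper standard parabolic $W_{J}$.

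The main technical obstacle is the choice of face: the opposite face $\{a_{i}=0\}$ does not in general lie in any proper parabolic, so Proposition \ref{highestbasedrectangle} is essential for isolating the unique direction $i$ for which $\Phi^{+}\setminus\Phi_{x_{i}}$ is exactly a parabolic positive root system $\Phi_{J}^{+}$; once this choice is made the remaining verification is routine bookkeeping with the cube structure and parabolic subgroup containments.
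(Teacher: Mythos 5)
Your core route is the same as the paper's: Proposition \ref{highestbasedrectangle} isolates the index $i$ whose inversion set is the highest based rectangle, Proposition \ref{wroandcube} gives $\coprod_{j\neq i}\Phi_{x_{j}}=\Phi_{J}^{+}$ for $J=S\setminus\{s_{\alpha}\}$, and hence the remaining terminal edges lie in $W_{J}$. Where the paper finishes in one stroke by citing Proposition \ref{cubedetermined} (the $(n-1)$-cube is reconstructed from its terminal edges by joins, inverses and compositions, all of which stay inside $W_{J}$), you instead analyze the face $\{a_{i}=1\}$ vertex by vertex; that is a legitimate alternative, and your reduction of ``edges in $W_{J}$'' to ``vertices in $W_{J}$'' via supports of reduced expressions is sound.

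The gap is in the sentence ``by Proposition \ref{lengthsadd} applied to each successive edge the composition along this path gives $v=hu$ with $\ell(v)=\ell(h)+\ell(u)$.'' Proposition \ref{lengthsadd} is an equivalence: to apply it to an edge $g$ leaving a vertex $u$ (viewed as a group element) you must first verify $g(\Phi_{u})\subseteq\Phi^{+}$, and the cube axioms do not hand you this. The Coxeter square conditions only give additivity for pairs of edge labels of a $2$-face, and pairwise additivity does not chain: in $A_{2}$, with $g_{1}=s_{1}$, $g_{2}=s_{2}$, $g_{3}=s_{2}s_{1}$, lengths add for $g_{2}g_{1}$ and for $g_{3}g_{2}$, yet $\ell(g_{3}g_{2}g_{1})=\ell(s_{1}s_{2})=2\neq 4$. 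The fact you need --- lengths add along every edge of a Coxeter cube based at its initial vertex --- is true, but it requires an argument, for instance: by Proposition \ref{wroandcube} applied to the sub-face below $u$ (itself a Coxeter cube whose terminal vertex composes to $u$), one has $\Phi_{u}=\coprod_{d}\Phi_{y_{d}}$, where the $y_{d}$ are the edges pointing into $u$; each $y_{d}$ spans a $2$-face with $g$, whose Coxeter square condition gives $g(\Phi_{y_{d}})=\Phi_{y_{d}'}\subseteq\Phi^{+}$; summing over $d$ yields $g(\Phi_{u})\subseteq\Phi^{+}$, and only then does Proposition \ref{lengthsadd} apply. (Alternatively: parallel edges of a Coxeter cube have equal lengths, so the edge lengths along any monotone path from the initial to the terminal vertex sum to $\ell(v)$ by Proposition \ref{wroandcube}, and subadditivity of $\ell$ then forces additivity at every intermediate vertex.) With such a lemma inserted your proof is complete; as written, the crucial step is unjustified.
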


\begin{proof}
    Let $ x_{1} , x_{2}, \dots , x_{n}$ be the terminal edges of $X$. By the Proposition \ref{highestbasedrectangle}, there exists an $i$ such that $ \Phi_{x_{i}} = \{  \gamma \in \Phi^{+} \mid \alpha \in \supp(\gamma) \} $ where $ \alpha \in \Pi$. Let $ s\in S$ denote the simple reflection associated to the simple root $\alpha$. Thus, for $j \neq i$, $\Phi_{x_{j}} \seq \Phi_{S \sm \{ s \}  }$, implying that $ x_{j} \in W_{S \sm \{ s \} }$ for $ j \neq i$. Thus, the edges $ x_{1}, \dots , x_{i-1}, x_{i+1} , \dots x_{n}$ lie in the standard parabolic subgroup $ W_{S \sm \{ s \} }$. Therefore, the $ (n-1)$-cube determined by $ x_{1}, \dots , x_{i-1}, x_{i+1} , \dots x_{n}$ lies entirely within the proper standard parabolic subgroup  $ W_{S \sm \{ s \} }$.
\end{proof}

\begin{prop} \label{bhgenrectangle}
    Consider the $A_{n}$ Coxeter system. Let $ s\in S$. Define $ J : = S \sm \{ s \}$. Then: 

    \begin{enumerate}
        \item $ |\Phi_{\nu(\alpha_{s} , \Pi_{J})} \cap \Pi|   = 1$ and $ \Phi_{\nu(\alpha_{s} , \Pi_{J})}$ contains the unique positive root of highest depth in the $A_{n}$ root poset.

        \item $ \Phi_{\nu(\alpha_{s} , \Pi_{J})} = \{ \gamma \in \Phi^{+} \mid \rho \in \supp(\gamma) \}$ where $ \{ \rho
 \} = \Phi_{\nu(\alpha_{s} , \Pi_{J})} \cap \Pi$ 

 \item If $x$ is an arbitrary element of the $A_{n}$ Coxeter system such that $ |\Phi_{x} \cap \Pi| = 1$ and $ \Phi_{x}$ contains the unique positive root of highest depth in the $A_{n}$ root poset, then there exists $ r\in S$, $ K := S \sm \{ r \}$ such that $ x = \nu(\alpha_{r} , \Pi_{K})$.
    \end{enumerate}
\end{prop}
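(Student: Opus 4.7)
The plan is to leverage the fact that $\nu(\alpha_{s}, \Pi_{J})$ appears as a terminal edge of the Coxeter $n$-cube built in the proof of Proposition \ref{ncubeexists}. That construction glues a Coxeter $(n-1)$-cube in $(W_{J},J)$ to its image under $\nu(\alpha_{s}, \Pi_{J})$ inside $(W_{K},K)$, where $\Pi_{K} = \nu(\alpha_{s}, \Pi_{J})(\Pi_{J})$ has $|K| = n-1$; the resulting $n$-cube has terminal edges $\nu(\alpha_{s}, \Pi_{J})$ together with the $n-1$ terminal edges of the inner $(n-1)$-cube living inside $W_{K}$.

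For part (1), Proposition \ref{wroandcube} partitions $\Phi^{+}$ into the left inversion sets of these $n$ terminal edges. The $n-1$ inner edges lie in $W_{K}$, and a standard coefficient argument shows that for any $w \in W_{K}$ one has $\Phi_{w} \seq \Phi^{+}_{K}$; since the highest root $\beta$ contains every simple root in its support it is not in $\Phi^{+}_{K}$, forcing $\beta \in \Phi_{\nu(\alpha_{s}, \Pi_{J})}$. The bound $|\Phi_{\nu(\alpha_{s}, \Pi_{J})} \cap \Pi| \leq 1$ follows because $\nu(\alpha_{s}, \Pi_{J})^{-1}(\Pi_{K}) = \Pi_{J} \seq \Phi^{+}$ shows $\Pi_{K} \cap \Phi_{\nu(\alpha_{s}, \Pi_{J})} = \emptyset$ and $|\Pi \sm \Pi_{K}| = 1$. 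The opposite inequality follows from Proposition \ref{leftinversionset}, since the first letter of any reduced expression for $\nu(\alpha_{s}, \Pi_{J})$ contributes a simple root to $\Phi_{\nu(\alpha_{s}, \Pi_{J})}$.

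For part (2), the proof of Proposition \ref{highestbasedrectangle} part (2) only uses that the element in question contains $\beta$ in its left inversion set and has a unique simple root there; no actual cube property is invoked. Thus the contradiction-and-support argument applies verbatim to $\nu(\alpha_{s}, \Pi_{J})$ equipped with the data from part (1), yielding the claimed support description. For part (3), this same observation lets one run the argument of Proposition \ref{highestbasedrectangle} on any $x$ satisfying its hypotheses: writing $\{\alpha_{r}\} = \Phi_{x} \cap \Pi$, the support argument gives $\Phi_{x} = \Phi^{+} \sm \Phi^{+}_{S \sm \{r\}}$, equivalently $x^{-1}(\Phi^{+} \sm \Phi^{+}_{S \sm \{r\}}) \seq \Phi^{-}$ and $x^{-1}(\Phi^{+}_{S \sm \{r\}}) \seq \Phi^{+}$. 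The uniqueness clause of Proposition \ref{generatorexistence} (with $I_{1} = S \sm \{r\}$, $I_{2} = \{r\}$) then identifies $x^{-1} = \nu(\alpha_{r}, \Pi_{S \sm \{r\}})$. Feeding this Brink--Howlett generator back through parts (1) and (2) yields $\Phi_{x^{-1}} = \Phi^{+} \sm \Phi^{+}_{S \sm \{r'\}}$ for a unique $r' \in S$, which translates to $x(\Phi^{+}_{S \sm \{r'\}}) \seq \Phi^{+}$ and $x(\Phi^{+} \sm \Phi^{+}_{S \sm \{r'\}}) \seq \Phi^{-}$. A second application of the uniqueness in Proposition \ref{generatorexistence} concludes $x = \nu(\alpha_{r'}, \Pi_{S \sm \{r'\}})$.

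The main technical point is the bridge in part (1) between the a priori abstract element $\nu(\alpha_{s}, \Pi_{J})$ and the cube-theoretic framework; once that connection is secured, parts (2) and (3) are essentially formal, relying on the support manipulation in the $A_{n}$ root poset together with two applications of the uniqueness clause of Proposition \ref{generatorexistence} applied in sequence to $x^{-1}$ and then to $x$.
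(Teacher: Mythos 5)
Your proposal is correct, and it differs from the paper's proof in two substantive places. For part (1), the paper never touches the cube machinery: it uses the fact that the inverse of a Brink--Howlett generator is again a Brink--Howlett generator (a standard fact it asserts without proof), writes $\nu(\alpha_{s},\Pi_{J})^{-1} = \nu(\alpha_{s'},\Pi_{J'})$, and then reads off $\Phi_{\nu(\alpha_{s},\Pi_{J})} = \Phi^{+}\sm\Phi^{+}_{J'}$ directly from the defining property in Proposition \ref{generatorexistence}, from which both claims of (1) are immediate. You instead realize $\nu(\alpha_{s},\Pi_{J})$ as a terminal edge of the $n$-cube constructed in Proposition \ref{ncubeexists} and force the highest root into its inversion set via the partition of Proposition \ref{wroandcube}, with the computation $\nu(\alpha_{s},\Pi_{J})^{-1}(\Pi_{K})=\Pi_{J}$ handling uniqueness of the simple root; this is a genuinely different argument. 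Part (2) is handled the same way in both. In part (3) you and the paper run the same support argument to get $\Phi_{x}=\{\gamma\in\Phi^{+}\mid \alpha_{r}\in\supp(\gamma)\}$ and identify $x^{-1}$ with $\nu(\alpha_{r},\Pi_{S\sm\{r\}})$ (your use of the uniqueness clause of Proposition \ref{generatorexistence} makes precise what the paper calls ``not too hard to show'' together with the fact that an element is determined by the positive roots it sends negative), but the endings differ: the paper again invokes closure of Brink--Howlett generators under inversion, while you bypass it by feeding $x^{-1}$ back through parts (1) and (2) and applying the uniqueness clause a second time, which in effect proves the needed instance of that closure fact. The trade-off: the paper's route is shorter and identifies the inversion set in one stroke; yours is longer but self-contained relative to results actually proven in the paper, and as a bonus, once $\nu(\alpha_{s},\Pi_{J})$ is known to be a terminal edge of a cube, Proposition \ref{highestbasedrectangle} applies to it as stated rather than only ``its proof applying verbatim.'' There is no circularity in your route, since Propositions \ref{ncubeexists}, \ref{wroandcube}, \ref{leftinversionset}, and \ref{highestbasedrectangle} all precede this proposition and do not depend on it.
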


\begin{proof} 
    The inverse of a Brink-Howlett generator is also a Brink-Howlett generator. Thus, there exists $ s' \in S$ and $ J' : = S \sm \{ s' \}$ such that $ \nu(\alpha_{s} , \Pi_{J})^{-1} = \nu(\alpha_{s'}, \Pi_{J'})$. But the subset of positive roots that are sent into $ \Phi^{-}$ by $ \nu(\alpha_{s'}, \Pi_{J'})$ is exactly the subset $ \Phi_{J' \cup s'}^{+} \sm \Phi^{+}_{J'} = \Phi^{+} \sm \Phi^{+}_{J'}$. Since $ J ' = S \sm \{ s' \}$, there is exactly one simple root in $\Phi^{+} \sm \Phi^{+}_{J'} $. Furthermore, the positive root of highest depth in the root poset of $A_{n}$ is not contained in any proper standard parabolic root subsystem. Thus, the positive root of highest depth lies in $ \Phi^{+} \sm \Phi_{J'}$. Therefore, $ \Phi_{\nu(\alpha_{s} , \Pi_{J})}$ has the desired properties as in statement (1).

    Statement (2) immediately follows from statement (1) and Proposition \ref{highestbasedrectangle}.

    For statement (3), let $ \alpha $ be the unique element of $ \Phi_{x} \cap \Pi$. Let $ \beta \in \Phi^{+}$ be the unique positive root of highest depth in the root poset of $ A_{n}$. Since we are assuming $ \beta \in \Phi_{x}$, we have that $ x^{-1}(\beta) \in \Phi^{-}$. Since $ \alpha$ is the unique element of $ \Phi_{x} \cap \Pi$, and since $ x^{-1}(\beta) \in \Phi^{-}$, we can reuse the proof of Proposition \ref{highestbasedrectangle} to conclude that

    $$ \Phi_{x} : = \{  \gamma \in \Phi^{+} \mid \alpha \in \supp(\gamma)  \}$$
    In other words, $  \{  \gamma \in \Phi^{+} \mid \alpha \in \supp(\gamma)  \}$ is exactly the set of positive roots that are sent into $\Phi^{-}$ by the element $ x^{-1}$. However, it is not too hard to show that $\{  \gamma \in \Phi^{+} \mid \alpha \in \supp(\gamma)  \}$ is also the exact set of positive roots that are sent into $\Phi^{-}$ by the element $ \nu(\alpha , \Pi \sm \{ \alpha \} )$. Since an element of a Coxeter system is uniquely determined by the subset of positive roots that it sends into $\Phi^{-}$, we deduce that $ x^{-1} = \nu(\alpha , \Pi \sm \{ \alpha \} )$, which gives $ x = \nu(\alpha , \Pi \sm \{ \alpha \} )^{-1}$. But the inverse of a Brink-Howlett generator is also a Brink-Howlett generator. Therefore, $x = \nu(\alpha_{r} , \Pi_{K})$ for some $ r \in S$, $ K : = S \sm \{ r \} $.

\end{proof}

Consider the root poset of $A_{n}$. For ease of visualization, take $ n =4$. Then the root poset of $ A_{4}$ looks as follows:

\begin{center}

\begin{tikzpicture}
\foreach  \y in {0,...,3}
    \foreach \x in {0,...,\y} 
        \fill [black] ( 3*\x -1.5*\y , -1.5*\y) circle [radius=0.1];

\node[below = 7pt] at (-4.5,-4.5) {$\alpha_{1}$};
\node[below = 7pt] at (-1.5,-4.5) {$\alpha_{2}$};
\node[below = 7pt] at (1.5,-4.5) {$\alpha_{3}$};
\node[below = 7pt] at (4.5,-4.5) {$\alpha_{4}$};

\node[below = 7pt] at (-3,-3) {$\alpha_{1} + \alpha_{2}$};
\node[below = 7pt] at (0,-3) {$\alpha_{2} + \alpha_{3}$};
\node[below = 7pt] at (3,-3) {$\alpha_{3} + \alpha_{4}$};

\node[below = 7pt] at (-1.5,-1.5) {$\alpha_{1} + \alpha_{2}+ \alpha_{3}$};
\node[below = 7pt] at (1.5,-1.5) {$\alpha_{2} + \alpha_{3}+ \alpha_{4}$};
\node[below = 7pt] at (0,0) {$\alpha_{1} + \alpha_{2}+ \alpha_{3}+ \alpha_{4}$};

\end{tikzpicture}

\end{center}
We define a \emph{highest based rectangle} to be a subset $R_{h}$ of the root poset of $A_{n}$ that is of the following form:

$$ R_{h} : = \{ \gamma \in \Phi^{+} \mid \alpha \in \supp(\gamma)  \}$$
where $ \alpha \in \Pi$. An example of a highest based rectangle is given below in the $A_{4}$ root poset:

\begin{center}

\begin{tikzpicture}
\foreach  \y in {0,...,3}
    \foreach \x in {0,...,\y} 
        \fill [black] ( 3*\x -1.5*\y , -1.5*\y) circle [radius=0.1];

\node[below = 7pt] at (-4.5,-4.5) {$\alpha_{1}$};
\node[below = 7pt] at (-1.5,-4.5) {$\alpha_{2}$};
\node[below = 7pt] at (1.5,-4.5) {$\alpha_{3}$};
\node[below = 7pt] at (4.5,-4.5) {$\alpha_{4}$};

\node[below = 7pt] at (-3,-3) {$\alpha_{1} + \alpha_{2}$};
\node[below = 7pt] at (0,-3) {$\alpha_{2} + \alpha_{3}$};
\node[below = 7pt] at (3,-3) {$\alpha_{3} + \alpha_{4}$};

\node[below = 7pt] at (-1.5,-1.5) {$\alpha_{1} + \alpha_{2}+ \alpha_{3}$};
\node[below = 7pt] at (1.5,-1.5) {$\alpha_{2} + \alpha_{3}+ \alpha_{4}$};
\node[below = 7pt] at (0,0) {$\alpha_{1} + \alpha_{2}+ \alpha_{3}+ \alpha_{4}$};

\draw (-1.5 , -5) -- ( 2, -1.5) --(0,.5 )--(-3.5,-3)--cycle;

\end{tikzpicture}

\end{center}
Proposition \ref{bhgenrectangle} has a nice combinatorial/geometric interpretation in the root poset of $A_{n}$. The proposition states that $ x = \nu(\alpha_{r} , \Pi_{K})$ for some $ r\in S$, $ K : = S \sm \{ r \} $ if and only if $ \Phi_{x}$ has the shape of a \emph{highest based rectangle}. If $ \Phi_{x}$ denotes the highest based rectangle in the diagram above, one can verify that $ x^{-1} = \nu(\alpha_{2} , \Pi_{J})$ where $ J : = S \sm \{ s_{2} \} $. Hence, $ x = \nu(\alpha_{3} , \Pi_{K})$ where $ K:= S \sm \{ s_{3} \} $. More generally, a \emph{based rectangle} $R$ is a subset of the root poset of $A_{n}$ of the following form:

$$ R = \{ \gamma \in \Phi^{+} \mid \alpha \in \supp(\gamma) \seq \supp(\rho) \}$$
where $ \alpha \in \Pi$ and $ \rho \in \Phi^{+}$ such that $ \alpha \in \supp(\rho)$. The below is an example of based rectangle in $ A_{4}$ where $ \alpha = \alpha_{2}$ and $ \rho = \alpha_{1} + \alpha_{2} + \alpha_{3}$:

\begin{center}

\begin{tikzpicture}
\foreach  \y in {0,...,3}
    \foreach \x in {0,...,\y} 
        \fill [black] ( 3*\x -1.5*\y , -1.5*\y) circle [radius=0.1];

\node[below = 7pt] at (-4.5,-4.5) {$\alpha_{1}$};
\node[below = 7pt] at (-1.5,-4.5) {$\alpha_{2}$};
\node[below = 7pt] at (1.5,-4.5) {$\alpha_{3}$};
\node[below = 7pt] at (4.5,-4.5) {$\alpha_{4}$};

\node[below = 7pt] at (-3,-3) {$\alpha_{1} + \alpha_{2}$};
\node[below = 7pt] at (0,-3) {$\alpha_{2} + \alpha_{3}$};
\node[below = 7pt] at (3,-3) {$\alpha_{3} + \alpha_{4}$};

\node[below = 7pt] at (-1.5,-1.5) {$\alpha_{1} + \alpha_{2}+ \alpha_{3}$};
\node[below = 7pt] at (1.5,-1.5) {$\alpha_{2} + \alpha_{3}+ \alpha_{4}$};
\node[below = 7pt] at (0,0) {$\alpha_{1} + \alpha_{2}+ \alpha_{3}+ \alpha_{4}$};

\draw (-1.5 , -5) -- ( .5, -3) --(-1.5,-1 )--(-3.5,-3)--cycle;

\end{tikzpicture}

\end{center}
Note that a based rectangle can be viewed as a highest based rectangle of some irreducible standard parabolic subgroup. In the example above, the based rectangle can be viewed as a highest based rectangle of the standard parabolic subsystem $(W_{J} ,J)$ where $J = \{ s_{1} , s_{2} , s_{3} \} $.

\begin{thm} \label{basedrectanglepartition}
    Let $ x_{1} , x_{2} , \dots , x_{n}$ be the terminal edges of a Coxeter $n$-cube in the $A_{n}$ Coxeter system. Then the $ \Phi_{x_{i}}$'s partition the root poset of $A_{n}$ into based rectangles.
\end{thm}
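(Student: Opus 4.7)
The plan is to induct on $n$, leveraging the fact that Proposition \ref{wroandcube} already guarantees the $\Phi_{x_i}$'s form a partition of $\Phi^+$; what remains is to check that each $\Phi_{x_i}$ is itself a based rectangle. The base case $n = 1$ is immediate, since the unique terminal edge is $s_1$ and $\Phi_{s_1} = \{\alpha_1\}$ is a based rectangle with $\alpha = \rho = \alpha_1$.

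For the inductive step, let $X$ be a Coxeter $n$-cube in $A_n$ with terminal edges $x_1,\dots,x_n$. Proposition \ref{highestbasedrectangle} lets me relabel so that $\Phi_{x_1}$ is the \emph{highest} based rectangle: $\Phi_{x_1} = \{\gamma \in \Phi^+ \mid \alpha_k \in \supp(\gamma)\}$ for some simple root $\alpha_k$. Write $s := s_k$ and $J := S \setminus \{s\}$. The proof of Proposition \ref{smallercubeinside} then shows that the remaining edges $x_2,\dots,x_n$ lie entirely in the standard parabolic subgroup $W_J$ and form a Coxeter $(n-1)$-cube there.

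I would then split on whether $s_k$ is an endpoint of the Dynkin diagram. If $k \in \{1, n\}$, then $W_J$ is irreducible of type $A_{n-1}$, and the inductive hypothesis applied to the sub-cube $(x_2,\dots,x_n)$ directly shows each $\Phi_{x_j}$ ($j \geq 2$) is a based rectangle of that $A_{n-1}$ root subposet. If $1 < k < n$, set $J_1 := \{s_1,\dots,s_{k-1}\}$ and $J_2 := \{s_{k+1},\dots,s_n\}$. Since any two simple roots in type $A$ separated by at least one intermediate node are orthogonal in the Coxeter form, $J_1$ and $J_2$ are orthogonal subsets of simple reflections, and Proposition \ref{productcubesnum} forces the $(n-1)$-cube in $W_J = W_{J_1} \times W_{J_2}$ to be a product cube. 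The edges $x_2,\dots,x_n$ thus split into terminal edges of a Coxeter $(k-1)$-cube in $W_{J_1} \cong A_{k-1}$ and a Coxeter $(n-k)$-cube in $W_{J_2} \cong A_{n-k}$, to each of which the inductive hypothesis applies.

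To stitch everything together I would record the essentially formal observation that a based rectangle inside a type-$A$ standard parabolic subsystem $(W_L, L)$ remains a based rectangle in $A_n$: for $\alpha' \in \Pi_L$ and $\rho' \in \Phi^+_L$ one automatically has $\supp(\rho') \seq \Pi_L$, so
\[
\{\gamma \in \Phi^+_L \mid \alpha' \in \supp(\gamma) \seq \supp(\rho')\} \;=\; \{\gamma \in \Phi^+ \mid \alpha' \in \supp(\gamma) \seq \supp(\rho')\}.
\]
Combined with $\Phi_{x_1}$ being a (highest) based rectangle in $A_n$ and with the partition of Proposition \ref{wroandcube}, this finishes the induction. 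The main obstacle is the interior-node case, where removing $s_k$ disconnects the $A_n$ diagram and would naively leave an $(n-1)$-cube inside a reducible Coxeter system; Proposition \ref{productcubesnum} is precisely the tool needed to force the induced sub-cube to factor, allowing the induction to descend through two smaller irreducible type-$A$ systems rather than through a reducible one.
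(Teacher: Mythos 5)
Your proof is correct and follows essentially the same route as the paper: identify the highest based rectangle via Proposition \ref{highestbasedrectangle}, push the remaining edges into the standard parabolic subsystem via Proposition \ref{smallercubeinside}, use Proposition \ref{productcubesnum} to factor the induced sub-cube when the removed node is interior, and induct. Your explicit case split on endpoint versus interior nodes and your remark that based rectangles of a parabolic subsystem remain based rectangles of the full root poset make precise two points the paper's proof leaves implicit, but the underlying argument is the same.
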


\begin{proof} 
    By Proposition \ref{highestbasedrectangle}, we know that there exists an $ i$ such that $ \Phi_{x_{i}}$ is a highest based rectangle of the entire root poset of $A_{n}$. Let $ s\in S$ such that $ \alpha_{s}$ is the unique element of $ \Phi_{x_{i}}$. But by Proposition \ref{smallercubeinside}, we conclude that $ x_{1} , \dots , x_{i-1} , x_{i+1} , \dots ,x_{n}$ must lie within the standard parabolic subsystem $( W_{S \sm \{ s \} } , S \sm \{ s \}) $. Because we have removed the vertex $ s$ from the Coxeter diagram of $A_{n}$, it follows that the Coxeter diagram of $ ( W_{S \sm \{ s \} } , S \sm \{ s \})$ is isomorphic to either $ A_{n-1}$ or $ A_{i-1} \times A_{n-i}$ for some $ 2\leq i \leq n-1$. If $ ( W_{S \sm \{ s \} } , S \sm \{ s \})$ is isomorphic to $A_{i-1} \times A_{n-i} $, note that Proposition \ref{productcubesnum} will imply that for $ j \neq i$, $x_{j}$ lies entirely within either the $ A_{i-1}$ component or the $ A_{n-i}$ component. Without loss of generality, assume that $ x_{1} , \dots , x_{i-1}$ lie entirely within $ A_{i}$ and that $ x_{i+1} , \dots ,x_{n}$ lie entirely within $ A_{n-i}$. Note also that the elements of $ \Phi^{+} \sm \Phi_{x_{i}}$ will be positive roots of the subsystem $ ( W_{S \sm \{ s \} } , S \sm \{ s \})$. Thus, because $ x_{1} , \dots , x_{i-1} , x_{i+1} , \dots x_{n}$ are the terminal edges of a Coxeter $(n-1)$-cube in $( W_{S \sm \{ s \} } , S \sm \{ s \}) $, and since $ ( W_{S \sm \{ s \} } , S \sm \{ s \})$ is isomorphic to either $ A_{n-1}$ or $ A_{i-1} \times A_{n-i}$ for some $ 2\leq i \leq n-1$, we can now apply Proposition \ref{highestbasedrectangle} to the root poset of $ (W_{S \sm \{  s\} } , S \sm \{  s \} )$ (more precisely, in the case of $(W_{S \sm \{  s\} } , S \sm \{  s \} ) \cong  A_{i-1} \times A_{n-i}$, we are applying Proposition \ref{highestbasedrectangle}to the component $A_{i-1}$ and to the component $ A_{n-i}$). Thus, via induction on $|S|$, the $\Phi_{x_{i}}$'s partition the root poset of $ A_{n}$ into based rectangles.
\end{proof}

A partition of the root poset of $A_{n}$ by the $ \Phi_{x_{i}}$'s as stated in Theorem \ref{basedrectanglepartition} is called a \emph{based rectangle partition of $A_{n}$}. We illustrate an example of the recursive procedure as described in the proof of Theorem \ref{basedrectanglepartition}:

\begin{ex}
    Let $ x_{1}, x_{2} , x_{3} , x_{4}$ be the terminal edges of a Coxeter $4$-cube in $A_{4}$. We know that one of the $\Phi_{x_{i}}$'s give rise to a highest based rectangle in $A_{4}$. Without loss of generality, let us assume that it is $\Phi_{x_{3}}$. Hence, $ \Phi_{x_{3}}$ might look like the highest based rectangle shown below:

    \begin{center}

\begin{tikzpicture}
\foreach  \y in {0,...,3}
    \foreach \x in {0,...,\y} 
        \fill [black] ( 3*\x -1.5*\y , -1.5*\y) circle [radius=0.1];

\node[below = 7pt] at (-4.5,-4.5) {$\alpha_{1}$};
\node[below = 7pt] at (-1.5,-4.5) {$\alpha_{2}$};
\node[below = 7pt] at (1.5,-4.5) {$\alpha_{3}$};
\node[below = 7pt] at (4.5,-4.5) {$\alpha_{4}$};

\node[below = 7pt] at (-3,-3) {$\alpha_{1} + \alpha_{2}$};
\node[below = 7pt] at (0,-3) {$\alpha_{2} + \alpha_{3}$};
\node[below = 7pt] at (3,-3) {$\alpha_{3} + \alpha_{4}$};

\node[below = 7pt] at (-1.5,-1.5) {$\alpha_{1} + \alpha_{2}+ \alpha_{3}$};
\node[below = 7pt] at (1.5,-1.5) {$\alpha_{2} + \alpha_{3}+ \alpha_{4}$};
\node[below = 7pt] at (0,0) {$\alpha_{1} + \alpha_{2}+ \alpha_{3}+ \alpha_{4}$};

\draw (1.5 , -5) -- ( 3.5, -3) --(0,.5 )--(-2,-1.5)--cycle;

\end{tikzpicture}

\end{center}
Note that $ \Phi^{+} \sm \Phi_{x_{3}}$ will be the root poset for $ (W_{S \sm \{ s_{3} \} }, S \sm \{ s_{3} \} )$, where $ (W_{S \sm \{ s_{3} \} }, S \sm \{ s_{3} \} )$ is clearly isomorphic to $ A_{2} \times A_{1}$. As aforementioned in the proof of Theorem \ref{basedrectanglepartition}, we know that for $ j\neq 3$, $x_{j}$ will lie entirely within the $A_{2}$ component or entirely within the $ A_{1}$ component. Hence, without loss of generality, assume that $x_{1} , x_{2}$ lie entirely within $ A_{2}$ and that $ x_{4}$ lies entirely within $ A_{1}$ (note that it is impossible for two of the $ x_{j}$'s to lie entirely within the $A_{1}$ component by Proposition \ref{highercubesdont}). But now we can apply Proposition \ref{highestbasedrectangle} to the components $A_{2}$ and $A_{1}$. One possibility is for $ \Phi_{x_{2}} = \{ \alpha_{2} , \alpha_{1}+\alpha_{2} \} $ and for $ \Phi_{x_{3}} = \{ \alpha_{4} \}$. Thus, the diagram would become:

\begin{center}

\begin{tikzpicture}
\foreach  \y in {0,...,3}
    \foreach \x in {0,...,\y} 
        \fill [black] ( 3*\x -1.5*\y , -1.5*\y) circle [radius=0.1];

\node[below = 7pt] at (-4.5,-4.5) {$\alpha_{1}$};
\node[below = 7pt] at (-1.5,-4.5) {$\alpha_{2}$};
\node[below = 7pt] at (1.5,-4.5) {$\alpha_{3}$};
\node[below = 7pt] at (4.5,-4.5) {$\alpha_{4}$};

\node[below = 7pt] at (-3,-3) {$\alpha_{1} + \alpha_{2}$};
\node[below = 7pt] at (0,-3) {$\alpha_{2} + \alpha_{3}$};
\node[below = 7pt] at (3,-3) {$\alpha_{3} + \alpha_{4}$};

\node[below = 7pt] at (-1.5,-1.5) {$\alpha_{1} + \alpha_{2}+ \alpha_{3}$};
\node[below = 7pt] at (1.5,-1.5) {$\alpha_{2} + \alpha_{3}+ \alpha_{4}$};
\node[below = 7pt] at (0,0) {$\alpha_{1} + \alpha_{2}+ \alpha_{3}+ \alpha_{4}$};

\draw (1.5 , -5) -- ( 3.5, -3) --(0,.5 )--(-2,-1.5)--cycle;

\draw (-3, -2.5) -- (-3.5 , -3) -- (-1.5, -5) -- (-1, -4.5)--cycle;

\draw (4.5, -4) --(4, -4.5) -- (4.5 ,-5 )-- (5,-4.5)--cycle;

\end{tikzpicture}

\end{center}
Lastly, we apply Proposition \ref{highestbasedrectangle} to $ (W_{\{ s_{1} \} } , \{ s_{1} \} )$ and to $ \Phi_{x_{1}} = \{ \alpha_{1} \} $, which gives the following partition:

\begin{center}

\begin{tikzpicture}
\foreach  \y in {0,...,3}
    \foreach \x in {0,...,\y} 
        \fill [black] ( 3*\x -1.5*\y , -1.5*\y) circle [radius=0.1];

\node[below = 7pt] at (-4.5,-4.5) {$\alpha_{1}$};
\node[below = 7pt] at (-1.5,-4.5) {$\alpha_{2}$};
\node[below = 7pt] at (1.5,-4.5) {$\alpha_{3}$};
\node[below = 7pt] at (4.5,-4.5) {$\alpha_{4}$};

\node[below = 7pt] at (-3,-3) {$\alpha_{1} + \alpha_{2}$};
\node[below = 7pt] at (0,-3) {$\alpha_{2} + \alpha_{3}$};
\node[below = 7pt] at (3,-3) {$\alpha_{3} + \alpha_{4}$};

\node[below = 7pt] at (-1.5,-1.5) {$\alpha_{1} + \alpha_{2}+ \alpha_{3}$};
\node[below = 7pt] at (1.5,-1.5) {$\alpha_{2} + \alpha_{3}+ \alpha_{4}$};
\node[below = 7pt] at (0,0) {$\alpha_{1} + \alpha_{2}+ \alpha_{3}+ \alpha_{4}$};

\draw (1.5 , -5) -- ( 3.5, -3) --(0,.5 )--(-2,-1.5)--cycle;

\draw (-3, -2.5) -- (-3.5 , -3) -- (-1.5, -5) -- (-1, -4.5)--cycle;

\draw (4.5, -4) --(4, -4.5) -- (4.5 ,-5 )-- (5,-4.5)--cycle;

\draw (-4.5 , -4)--(-4 , -4.5) --(-4.5,-5)--(-5,-4.5)--cycle;

\end{tikzpicture}

\end{center}

\end{ex}
The next theorem establishes a converse to Theorem \ref{basedrectanglepartition}:

\begin{thm} \label{partitiontoncube}
    Given a based rectangle partition $P$ of the root poset of $ A_{n}$, there exists a Coxeter $n$-cube with terminal edges $ x_{1} , x_{2} , \dots, x_{n}$ such that $ P = \{ \Phi_{x_{i}} \mid 1\leq i \leq n  \} $. Consequentially, any based rectangle of the root poset of $ A_{n}$ corresponds to the left inversion set of some terminal edge of a Coxeter $n$-cube in $A_{n}$.
\end{thm}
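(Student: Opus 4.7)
The plan is to induct on $n$, using the recursive structure exposed in Theorem \ref{basedrectanglepartition}. The base case $n = 1$ is immediate: the only based rectangle partition is $\{\{\alpha_1\}\}$, realized by the Coxeter $1$-cube with edge $s_1$. For the inductive step, let $P$ be a based rectangle partition of $\Phi^+$ in $A_n$.

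First I would single out the unique $R_h \in P$ containing the highest root $\beta$. Because any based rectangle containing $\beta$ must have $\supp(\rho) = S$, such an $R_h$ is automatically a highest based rectangle, so $R_h = \{\gamma \in \Phi^+ \mid \alpha_{r'} \in \supp(\gamma)\}$ for a unique $r' \in S$. Proposition \ref{bhgenrectangle} produces the unique $x_1 \in W$ with $\Phi_{x_1} = R_h$, and this $x_1$ has the form $\nu(\alpha_{r''}, \Pi_{S \setminus \{r''\}})$ for some $r'' \in S$. Since $\Phi_{x_1} = \Phi^+ \setminus \Phi^+_{S \setminus \{r'\}}$ and $x_1$ maps $\Pi_{S \setminus \{r''\}}$ to some $\Pi_K \subseteq \Pi$, a short counting argument (the cube to be built will force the remaining terminal edges to live in $W_K$ with left inversion sets partitioning $\Phi^+_{S \setminus \{r'\}}$) gives $\Phi^+_K = \Phi^+_{S \setminus \{r'\}}$ and hence $K = S \setminus \{r'\}$. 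Thus conjugation by $x_1$ is an isomorphism of Coxeter systems $(W_{S \setminus \{r''\}}, S \setminus \{r''\}) \xrightarrow{\sim} (W_{S \setminus \{r'\}}, S \setminus \{r'\})$.

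The remaining rectangles $P \setminus \{R_h\}$ form a based rectangle partition of $\Phi^+_{S \setminus \{r'\}}$. The parabolic $(W_{S \setminus \{r'\}}, S \setminus \{r'\})$ is isomorphic to either $A_{n-1}$ (if $r' \in \{s_1, s_n\}$) or $A_{i-1} \times A_{n-i}$ (if $r' = s_i$ with $1 < i < n$). In the irreducible case, the inductive hypothesis directly produces a Coxeter $(n-1)$-cube $Y$ in $(W_{S \setminus \{r'\}}, S \setminus \{r'\})$ realizing $P \setminus \{R_h\}$. In the product case, every based rectangle in $P \setminus \{R_h\}$ has connected support (positive roots in type $A$ have connected support), so $P \setminus \{R_h\}$ splits as a disjoint union of based rectangle partitions of the two components; applying the inductive hypothesis to each component and combining via the product cube construction preceding Proposition \ref{productcubesnum} yields $Y$. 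Finally, conjugating $Y$ by $x_1^{-1}$ to obtain a cube in $(W_{S \setminus \{r''\}}, S \setminus \{r''\})$ and then applying the construction from the proof of Proposition \ref{ncubeexists} with Brink-Howlett generator $x_1$ produces a Coxeter $n$-cube in $A_n$ whose terminal edges are $x_1$ together with the terminal edges of $Y$; their left inversion sets reconstitute $P$.

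For the corollary, I would verify separately that every based rectangle $R_0$ belongs to some based rectangle partition by a descending procedure: when $R_0 = \{\gamma \mid \alpha \in \supp(\gamma) \subseteq \supp(\rho_0)\}$ with $\supp(\rho_0) \subsetneq S$, pick any $s \in S \setminus \supp(\rho_0)$, place the highest based rectangle $\{\gamma \mid \alpha_s \in \supp(\gamma)\}$ (which is automatically disjoint from $R_0$) in the partition, and recurse inside $\Phi^+_{S \setminus \{s\}}$, splitting into product components as needed; when $\supp(\rho_0)$ coincides with the ambient simple system, $R_0$ is the highest based rectangle there and we complete it using the algorithm of Theorem \ref{basedrectanglepartition}. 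The main obstacle I anticipate is bookkeeping in the reducible case: verifying that the split of $P \setminus \{R_h\}$ across the two components is compatible with the product cube construction, and that conjugation by $x_1^{-1}$ correctly carries $Y$ into $(W_{S \setminus \{r''\}}, S \setminus \{r''\})$ so that the Proposition \ref{ncubeexists} gluing step assembles into a single $n$-cube with the prescribed terminal edges.
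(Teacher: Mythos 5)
Your proposal is correct and follows essentially the same route as the paper's proof: single out the highest based rectangle of $P$, identify it via Proposition \ref{bhgenrectangle} part (3) as the left inversion set of a Brink--Howlett generator, recurse into the parabolic subsystem carrying the remaining rectangles (splitting into components in the reducible case), and assemble the $n$-cube by the gluing construction from the proof of Proposition \ref{ncubeexists}, with the corollary handled by completing an arbitrary based rectangle to a full partition using a disjoint highest based rectangle. The paper compresses all of this into ``recursively apply our analysis,'' so your version simply makes explicit the induction bookkeeping, the product-cube case, and the conjugation step (where your parenthetical ``counting argument'' for $K = S \setminus \{r'\}$ is most cleanly justified by noting $x_{1}(\Phi^{+}_{S\setminus\{r''\}}) = \Phi^{+} \cap x_{1}(\Phi^{+}) = \Phi^{+}\setminus\Phi_{x_{1}} = \Phi^{+}_{S\setminus\{r'\}}$) that the paper leaves implicit.
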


\begin{proof}
    Since $ P$ is a based rectangle partition, $P$ must contain a highest based rectangle (a based rectangle that contains the positive root of maximum depth). But by Proposition \ref{bhgenrectangle} part (3), we know that this highest based rectangle is a left inversion set of some $ x = \nu(\alpha_{r} , \Pi_{K})$ where $ r\in S$ and $ K:= S \sm \{ r \} $. As we saw in the proof of Theorem \ref{basedrectanglepartition}, $ \Phi^{+} \sm \Phi_{x}$ is the set of positive roots for the standard parabolic subsystem $ (W_{K} , K)$. Furthermore, $ (W_{K} ,K)$ is isomorphic to either $ A_{n-1}$ or $ A_{i-1} \times A_{n-i}$ for some $ 2\leq i \leq n-1$. Thus, we can recursively apply our analysis to the Coxeter system $ (W_{K} , K)$ and the root poset of $ \Phi^{+} \sm \Phi_{x}$. Therefore, we conclude that each based rectangle of $ P$ corresponds to the left inversion set of some Brink-Howlett generator. But since each of these left inversion sets correspond to a Brink-Howlett generator, we conclude that $ P$ arises from the terminal edges of a Coxeter $n$-cube that was constructed using the recursive method as described in the proof of Proposition \ref{ncubeexists}.

    Suppose we are a given a based rectangle $ R$ of the root poset of $A_{n}$. If $ R$ is a highest based rectangle, then we know that $ \Phi^{+} \sm R$ will be the root poset of some proper standard parabolic subgroup that is isomorphic to either $ A_{n-1}$ or $ A_{i-1} \times A_{n-i}$ for some $ 2\leq i \leq n-1$. Hence, we will be able to partition the rest of $ \Phi^{+} \sm R$ with based rectangles. Thus, $ R$ will be a based rectangle of some based rectangle partition of the root poset of $A_{n}$. By what we proved in the previous paragraph, this implies that $R$ is the left inversion set of some terminal edge of a Coxeter $n $-cube in $ A_{n}$. 
    
    If $ R$ is not a highest based rectangle (meaning that the positive root of highest depth is not in $R$), then we can construct a highest based rectangle $R_{h}$ in the root poset of $ A_{n}$ that does not intersect with our original based rectangle $R$. Thus, $ R \seq \Phi^{+} \sm R_{h}$, and since $ \Phi{+} \sm R_{h}$ is the root poset of either $ A_{n-1}$ or $ A_{i-1} \times A_{n-1}$, we can apply recursion to construct a based rectangle partition of the root poset of $A_{n}$ with our original based rectangle $ R$. Hence, we again conclude that $ R$ must be the left inversion set of some terminal edge of a Coxeter $n$-cube in $ A_{n}$.
\end{proof}

Even though we have shown that any based rectangle partition $ P$ of the root poset of $ A_{n}$ corresponds to some Coxeter $ n$-cube in $ A_{n}$, it is possible that two different based rectangle partitions correspond to the same Coxeter $ n$-cube modulo orientation. We give a breif example:

\begin{ex}
    Consider the two following based rectangle partitions of $ A_{2}$:
\vspace{.5cm}
\begin{center}
    \begin{tikzpicture}
\foreach  \y in {0,...,1}
    \foreach \x in {0,...,\y} 
        \fill [black] ( 3*\x -1.5*\y , -1.5*\y) circle [radius=0.1];

        \node[below = 7pt] at (-1.5,-1.5) {$\alpha_{1}$};
        \node[below = 7pt] at (1.5,-1.5) {$\alpha_{2}$};
        \node[below = 7pt] at (0,0) {$\alpha_{1}+\alpha_{2}$};

        \draw (-1.5 , -1) --(-2, -1.5)--(-1.5,-2)--(-1,-1.5)--cycle;
        \draw (0 , .5) --(-.5, 0)--(1.5,-2)--(2,-1.5)--cycle;

\end{tikzpicture}
\hspace{2cm}
\begin{tikzpicture}
\foreach  \y in {0,...,1}
    \foreach \x in {0,...,\y} 
        \fill [black] ( 3*\x -1.5*\y , -1.5*\y) circle [radius=0.1];

        \node[below = 7pt] at (-1.5,-1.5) {$\alpha_{1}$};
        \node[below = 7pt] at (1.5,-1.5) {$\alpha_{2}$};
        \node[below = 7pt] at (0,0) {$\alpha_{1}+\alpha_{2}$};

        \draw (1.5 , -1) --(2, -1.5)--(1.5,-2)--(1,-1.5)--cycle;
        \draw (0 , .5) --(.5, 0)--(-1.5,-2)--(-2,-1.5)--cycle;

\end{tikzpicture}
\end{center}

Note $ \Phi_{s_{1}} = \{ \alpha_{1} \} $, $ \Phi_{s_{2}} = \{ \alpha_{2} \} $, $ \Phi_{s_{1}s_{2}} = \{ \alpha_{1} , \alpha_{1} + \alpha_{2} \} $, and $ \Phi_{s_{2}s_{1}} = \{ \alpha_{2} , \alpha_{1} + \alpha_{2} \} $. Hence, the two based rectangle partitions above correspond to the following two Coxeter squares respectively:

\begin{center}
\vspace{.5cm}
\begin{tikzcd}
\sbullet \arrow[rr,  "s_{2}s_{1}"] & & \sbullet \\
\\
\sbullet \arrow[uu, "s_{2}"] \arrow[rr,  "s_{2}s_{1}"] & & \sbullet \arrow[uu, "s_{1}"]
\end{tikzcd}
\hspace{1cm}
\begin{tikzcd}
\sbullet \arrow[rr, "s_{1}s_{2}"] & & \sbullet \\
\\
\sbullet \arrow[uu, "s_{1}"] \arrow[rr, "s_{1}s_{2}"] & & \sbullet \arrow[uu, "s_{2}"]
\end{tikzcd}
\end{center}
But note that the square on the right can be reoriented by flipping the edges labeled $ s_{1}s_{2}$ to match the square on the left. Hence, the two Coxeter squares above are the same modulo orientation.

\end{ex}

Consider a based rectangle partition $P$ of the root poset of $ A_{n}$. We define a \emph{compatible subtriangle $Q$ of $P$} such that:

\begin{enumerate}
    \item $ Q \seq \Phi^{+}$ such that $ Q$ is the set of positive roots associated to some standard parabolic subsystem $ (W_{K}, K)$ where $ (W_{K} , K)$ is isomorphic to the $ A_{k}$ Coxeter system for some $ k =1,2, \dots, n$.

    \item $ Q$ is a union of based rectangles from the based rectangle partition $P$.
\end{enumerate}

\begin{ex}

    Consider the based rectangle partition $P$ of the root poset of $A_{n}$:

\vspace{.5cm}
    \begin{center}

\begin{tikzpicture}
\foreach  \y in {0,...,3}
    \foreach \x in {0,...,\y} 
        \fill [black] ( 3*\x -1.5*\y , -1.5*\y) circle [radius=0.1];

\node[below = 7pt] at (-4.5,-4.5) {$\alpha_{1}$};
\node[below = 7pt] at (-1.5,-4.5) {$\alpha_{2}$};
\node[below = 7pt] at (1.5,-4.5) {$\alpha_{3}$};
\node[below = 7pt] at (4.5,-4.5) {$\alpha_{4}$};

\node[below = 7pt] at (-3,-3) {$\alpha_{1} + \alpha_{2}$};
\node[below = 7pt] at (0,-3) {$\alpha_{2} + \alpha_{3}$};
\node[below = 7pt] at (3,-3) {$\alpha_{3} + \alpha_{4}$};

\node[below = 7pt] at (-1.5,-1.5) {$\alpha_{1} + \alpha_{2}+ \alpha_{3}$};
\node[below = 7pt] at (1.5,-1.5) {$\alpha_{2} + \alpha_{3}+ \alpha_{4}$};
\node[below = 7pt] at (0,0) {$\alpha_{1} + \alpha_{2}+ \alpha_{3}+ \alpha_{4}$};

\draw (0,.5)--(-.5,0)--(4.5,-5)--(5,-4.5)--cycle;

\draw (-1.5,-1)--(-1, -1.5)--(-4.5,-5)--(-5,-4.5)--cycle;

\draw (0 , -2.5)--(-.5,-3)--(1.5,-5)--(2,-4.5)--cycle;

\draw (-1.5 , -4)--(-2, -4.5)--(-1.5,-5)--(-1, -4.5)--cycle;

\end{tikzpicture}

\end{center}
One such example of a compatible subtriangle is the subset $ \Phi^{+}_{ \{ s_{1} , s_{2} , s_{3} \} }$, which can be viewed as the following blue subtriangle:

\vspace{.5cm}
    \begin{center}

\begin{tikzpicture}
\foreach  \y in {0,...,3}
    \foreach \x in {0,...,\y} 
        \fill [black] ( 3*\x -1.5*\y , -1.5*\y) circle [radius=0.1];

\node[below = 7pt] at (-4.5,-4.5) {$\alpha_{1}$};
\node[below = 7pt] at (-1.5,-4.5) {$\alpha_{2}$};
\node[below = 7pt] at (1.5,-4.5) {$\alpha_{3}$};
\node[below = 7pt] at (4.5,-4.5) {$\alpha_{4}$};

\node[below = 7pt] at (-3,-3) {$\alpha_{1} + \alpha_{2}$};
\node[below = 7pt] at (0,-3) {$\alpha_{2} + \alpha_{3}$};
\node[below = 7pt] at (3,-3) {$\alpha_{3} + \alpha_{4}$};

\node[below = 7pt] at (-1.5,-1.5) {$\alpha_{1} + \alpha_{2}+ \alpha_{3}$};
\node[below = 7pt] at (1.5,-1.5) {$\alpha_{2} + \alpha_{3}+ \alpha_{4}$};
\node[below = 7pt] at (0,0) {$\alpha_{1} + \alpha_{2}+ \alpha_{3}+ \alpha_{4}$};

\draw (0,.5)--(-.5,0)--(4.5,-5)--(5,-4.5)--cycle;

\draw (-1.5,-1)--(-1, -1.5)--(-4.5,-5)--(-5,-4.5)--cycle;

\draw (0 , -2.5)--(-.5,-3)--(1.5,-5)--(2,-4.5)--cycle;

\draw (-1.5 , -4)--(-2, -4.5)--(-1.5,-5)--(-1, -4.5)--cycle;

\draw[blue] (-1.5 , -.8)--(-6 , -5.3)--(3 , -5.3)--cycle;

\end{tikzpicture}

\end{center}
The subset $ \Phi_{ \{  s_{1} , s_{2} , s_{3} \} }^{+}$ is the set of positive roots associated to the standard parabolic subsystem $ (W_{ \{ s_{1} , s_{2} , s_{3} \} } , \{ s_{1} , s_{2} , s_{3}  \} )$, which is clearly isomorphic to the $ A_{3}$ Coxeter system. Furthermore, $ \Phi_{ \{ s_{1} , s_{2}, s_{3} \} }^{+}$ is a union of based rectangles from the partition $P$. The other compatible subtriangles inside the partition $P$ are the following: $ \Phi^{+}_{ \{ s_{2} \} }$, $\Phi^{+}_{ \{ s_{2} , s_{3}  \} } $, and the entire set of positive roots $\Phi^{+} $. In general, for any based rectangle partition $P$ of the root poset of $ A_{n}$, there will be exactly $ n$ compatible subtriangles of $P$.

\end{ex}
Let $ P$ be a based rectangle partition of the root poset of $ A_{n}$. Let $ Q$ be a compatible subtriangle of $ P$. We define \emph{a flip by the compatible subtraingle $Q$} or simply \emph{a flip by $Q$} to be the operation that transforms $P$ to the based rectangle partition $P'$, where $ P'$ is formed from $P$ by flipping $ Q$ by its vertical axis of symmetry. We present an example of such a compatible subtriangle flip:

\vspace{.5cm}
    \begin{center}

\begin{tikzpicture}
\foreach  \y in {0,...,3}
    \foreach \x in {0,...,\y} 
        \fill [black] ( 3*\x -1.5*\y , -1.5*\y) circle [radius=0.1];

\node[below = 7pt] at (-4.5,-4.5) {$\alpha_{1}$};
\node[below = 7pt] at (-1.5,-4.5) {$\alpha_{2}$};
\node[below = 7pt] at (1.5,-4.5) {$\alpha_{3}$};
\node[below = 7pt] at (4.5,-4.5) {$\alpha_{4}$};

\node[below = 7pt] at (-3,-3) {$\alpha_{1} + \alpha_{2}$};
\node[below = 7pt] at (0,-3) {$\alpha_{2} + \alpha_{3}$};
\node[below = 7pt] at (3,-3) {$\alpha_{3} + \alpha_{4}$};

\node[below = 7pt] at (-1.5,-1.5) {$\alpha_{1} + \alpha_{2}+ \alpha_{3}$};
\node[below = 7pt] at (1.5,-1.5) {$\alpha_{2} + \alpha_{3}+ \alpha_{4}$};
\node[below = 7pt] at (0,0) {$\alpha_{1} + \alpha_{2}+ \alpha_{3}+ \alpha_{4}$};

\draw (0,.5)--(-.5,0)--(4.5,-5)--(5,-4.5)--cycle;

\draw (-1.5,-1)--(-1, -1.5)--(-4.5,-5)--(-5,-4.5)--cycle;

\draw (0 , -2.5)--(-.5,-3)--(1.5,-5)--(2,-4.5)--cycle;

\draw (-1.5 , -4)--(-2, -4.5)--(-1.5,-5)--(-1, -4.5)--cycle;

\draw[blue] (-1.5 , -.8)--(-6 , -5.3)--(3 , -5.3)--cycle;

\end{tikzpicture}

\end{center}
In the above based rectanlge partition $P$ of $ A_{4}$, we consider the compatible subtriangle $ \Phi^{+}_{ \{ s_{1} , s_{2} , s_{3} \}  }$ (denoted in blue). A flip by $ \Phi^{+}_{ \{ s_{1} , s_{2} , s_{3} \}  }$'s vertical axis of symmetry results in the new based rectangle partition $ P'$, given below:

\vspace{.5cm}
    \begin{center}

\begin{tikzpicture}
\foreach  \y in {0,...,3}
    \foreach \x in {0,...,\y} 
        \fill [black] ( 3*\x -1.5*\y , -1.5*\y) circle [radius=0.1];

\node[below = 7pt] at (-4.5,-4.5) {$\alpha_{1}$};
\node[below = 7pt] at (-1.5,-4.5) {$\alpha_{2}$};
\node[below = 7pt] at (1.5,-4.5) {$\alpha_{3}$};
\node[below = 7pt] at (4.5,-4.5) {$\alpha_{4}$};

\node[below = 7pt] at (-3,-3) {$\alpha_{1} + \alpha_{2}$};
\node[below = 7pt] at (0,-3) {$\alpha_{2} + \alpha_{3}$};
\node[below = 7pt] at (3,-3) {$\alpha_{3} + \alpha_{4}$};

\node[below = 7pt] at (-1.5,-1.5) {$\alpha_{1} + \alpha_{2}+ \alpha_{3}$};
\node[below = 7pt] at (1.5,-1.5) {$\alpha_{2} + \alpha_{3}+ \alpha_{4}$};
\node[below = 7pt] at (0,0) {$\alpha_{1} + \alpha_{2}+ \alpha_{3}+ \alpha_{4}$};

\draw (0,.5)--(-.5,0)--(4.5,-5)--(5,-4.5)--cycle;

\draw (-1.5,-1)--(2, -4.5)--(1.5,-5)--(-2,-1.5)--cycle;

\draw (-3 , -2.5)--(-2.5, -3)--(-4.5 , -5)--(-5, -4.5)--cycle;

\draw (-1.5 , -4)--(-2, -4.5)--(-1.5,-5)--(-1, -4.5)--cycle;

\draw[blue] (-1.5 , -.8)--(-6 , -5.3)--(3 , -5.3)--cycle;

\end{tikzpicture}

\end{center}
Given a based rectangle partition $P$ of the root poset of $A_{n}$, we use the terminology \emph{$P$ modulo compatible subtriangle flips} to denote the collection of all based rectangle partitions $P'$ that can be reached from $ P$ by performing a sequence of compatible subtriangle flips.

\begin{thm} \label{firstbijection}
    Let $P$ be a based rectangle partition of the root poset of $ A_{n}$. Let $ X$ denote the Coxter $n$-cube associated to $ P$ (by Theorem \ref{partitiontoncube}). Performing a sequence of compatible subtriangle flips on $ P$ corresponds to a reorientation of the Coxeter $n$-cube $X$. Hence, there is a bijection between the set of Coxeter $n$-cubes in $A_{n}$ modulo reorientation and the set of based rectangle partitions of the $A_{n}$ root poset modulo compatible subtriangle flips.
\end{thm}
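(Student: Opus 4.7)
The plan is to establish both directions of the correspondence between the equivalence relations on the two sides. By Theorem \ref{partitiontoncube}, the map $P \mapsto X$ (sending a partition to the Coxeter $n$-cube whose terminal edges have $P$ as their inversion sets) is surjective onto Coxeter $n$-cubes in $A_n$, and by Proposition \ref{cubedetermined} the cube $X$ is entirely determined by its terminal edges, hence by $P$. To obtain the bijection modulo the two equivalences it therefore suffices to prove: (a) if $P'$ arises from $P$ by a single compatible subtriangle flip then $X'$ is a reorientation of $X$; and (b) conversely, if $X'$ is a reorientation of $X$ then $P'$ can be reached from $P$ by a sequence of compatible subtriangle flips.

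For direction (a), let $Q$ be a compatible subtriangle of $P$ corresponding to a standard parabolic subsystem $(W_K, K) \cong A_k$. By condition (2) of the definition of a compatible subtriangle, the based rectangles of $P$ lying inside $Q$ form a based rectangle partition of $Q$ viewed as the root poset of $A_k$. Applying Theorem \ref{partitiontoncube} inside $(W_K, K)$, these based rectangles are the inversion sets of the terminal edges of a Coxeter $k$-subcube $X_Q$ of $X$ (arising from the inductive construction of Proposition \ref{ncubeexists}); the based rectangles of $P$ outside $Q$ correspond to the remaining terminal edges of $X$ that are \emph{not} in $X_Q$, as can be verified using Proposition \ref{smallercubeinside} together with the decomposition of $\Phi^+$ in Proposition \ref{wroandcube}. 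The vertical-axis flip of $Q$ realizes the non-trivial automorphism of the $A_k$ Dynkin diagram, which is induced by conjugation by the longest element $w_0(W_K) \in W_K$. This automorphism permutes the terminal edges of $X_Q$ and, by Proposition \ref{flippingthesquare} together with the identities from Proposition \ref{equationandcocycle}, this reshuffling is exactly the effect of a reorientation of the subcube $X_Q$ inside $X$. Since the partition outside $Q$ is unchanged, the rest of the cube $X$ is unaffected, and $X'$ is a reorientation of $X$.

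For direction (b), reorientations of a Coxeter $n$-cube are generated (by definition) by flips of collections of parallel edges together with isometries of the cube fixing its initial and terminal vertices. The key observation is that each such generator has a local realization as a compatible subtriangle flip: a flip of the parallel edges of a $2$-face of $X$ corresponds to flipping the compatible subtriangle of type $A_2$ whose union of based rectangles equals the root subposet spanned by the inversion sets of the two terminal edges of that face (this is exactly the phenomenon exhibited in the $A_2$ example immediately preceding the theorem). Isometries of the $n$-cube fixing the two antipodal vertices form $S_n$ acting by permutation of axes; adjacent transpositions are generated by the same kind of $A_2$-type (or, at worst, $A_3$-type) local flips acting on compatible subtriangles corresponding to Brink-Howlett generators $\nu(\alpha_s, \Pi_J)$ from Proposition \ref{bhgenrectangle}.

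The main obstacle will be the bookkeeping in direction (b): one must verify that the local subtriangle flips actually generate every reorientation of $X$, not merely a subset. I would handle this by induction on $n$, using Proposition \ref{smallercubeinside} to reduce an arbitrary reorientation to one that acts nontrivially only inside a proper standard parabolic subsystem of type $A_{n-1}$ (or a product $A_{i-1} \times A_{n-i}$, where Proposition \ref{productcubesnum} lets us decouple the two factors), then invoking the inductive hypothesis on the smaller system. The base case $n=2$ is precisely the example before the theorem. Combining (a) and (b), the map $P \mapsto X$ descends to a well-defined bijection between based rectangle partitions modulo compatible subtriangle flips and Coxeter $n$-cubes modulo reorientation.
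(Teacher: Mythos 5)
Your direction (a) is essentially the paper's argument: the paper likewise identifies the flip of a compatible subtriangle $Q = \Phi_J^+$ with conjugation by the longest element of $(W_J,J)$, implemented through the Brink--Howlett generator $x_k = \nu(\alpha_r,\Pi_K) = w_Jw_K$ whose inversion set is the highest based rectangle of $Q$. The paper then makes the reorientation explicit by writing out the Coxeter squares $x_kx_i' = x_ix_k$ and checking that flipping all edges parallel to $x_k$ replaces the terminal edges $x_1,\dots,x_{k-1},x_k$ by $x_k^{-1}x_1x_k,\dots,x_k^{-1}x_{k-1}x_k,\ x_k^{-1}$ while leaving the terminal edges outside $Q$ untouched. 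You assert this final identification (``this reshuffling is exactly the effect of a reorientation of the subcube'') rather than proving it, but the idea is the same.

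Direction (b) contains a genuine error. First, for $n\ge 3$ a reorientation generator is not a flip of the pair of parallel edges of a single $2$-face (that operation does not produce a commutative cube); it is a flip of all $2^{n-1}$ edges parallel to one terminal edge $x_k$. Second, and more seriously, the compatible subtriangle realizing this flip is \emph{not} of type $A_2$, and it is not ``the root subposet spanned by the inversion sets of the two terminal edges of that face.'' The correct subtriangle --- which is what the paper uses --- is $\Phi_J^+$ for the unique irreducible standard parabolic subsystem $(W_J,J)$ in which the based rectangle $\Phi_{x_k}$ is the \emph{highest} based rectangle; its type depends on $\Phi_{x_k}$ and equals $A_n$ itself whenever $\Phi_{x_k}$ contains the highest root. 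For instance, flipping the edges parallel to $s_1s_2s_3$ in the left cube of Example \ref{cubesexample} is realized only by flipping the entire $A_3$ root poset, not any $A_2$ subtriangle. This also breaks your inductive bookkeeping: the flip in the direction of the highest-rectangle edge does not act inside any proper standard parabolic subsystem, so it cannot be reduced via Proposition \ref{smallercubeinside}; that case is precisely where the main content lies. Finally, the isometries of the cube fixing the initial and terminal vertices merely permute the set of terminal edges and hence leave the partition $P$ unchanged; they correspond to the empty sequence of subtriangle flips, so your attempt to realize adjacent transpositions by $A_2$- or $A_3$-type flips is addressing a non-issue and reflects the same misidentification of what the reorientation generators do to $P$.
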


\begin{proof}
    Let $ x_{1} , x_{2} , \dots , x_{n}$ be the terminal edges of $X$. We need to show that performing a compatible subtriangle flip on $P$ corresponds to reversing the direction of all oriented edges that are parallel to some terminal edge $ x_{k}$ (including $x_{k}$ itself). We also need to show that if we flip all edges parallel to the terminal edge $x_{k}$ (including $x_{k}$ itself), then there must be a compatible subtriangle flip corresponding to flipping all edges parallel to the edge $x_{k}$.

    Let $ J \seq S$ such that $ \Phi^{+}_{J}$ is a set of positive roots corresponding to a compatible subtriangle flip in $P$. Note that $ (W_{J} ,J)$ is isomorphic to $ A_{k}$ for some $ k =1,2, \dots , n$. Since $ (W_{J} , J)$ is isomorphic to $A_{k}$, note that there will be $k$ based rectangles within the compatible subtriangle $ \Phi^{+}_{J}$. These $k$ based rectangles within $ \Phi^{+}_{J}$ correspond to $k$ terminal edges of $X$. Without loss of generality, let us assume that these $k$ terminal edges are $ x_{1} , \dots , x_{k}$. Furthermore, since $ (W_{J} , J)$ is a Coxeter system that is isomorphic to $A_{k}$, one of the based rectangles in $ \Phi^{+}_{J}$ must be a highest based rectangle for the root poset $ \Phi^{+}_{J}$. Without loss of generality, let $ x_{k}$ be the terminal edge corresponding to this highest based rectangle in $ \Phi^{+}_{J}$.

    Since $ x_{k}$ corresponds to the highest based rectangle of $ \Phi^{+}_{J}$, it follows that $\Phi_{x_{k}}$ is equal to this highest based rectangle in $ \Phi_{J}^{+}$. I claim that performing the compatible subtriangle flip defined by $ \Phi^{+}_{J}$ corresponds to replacing $ x_{k}$ by $ x_{k}^{-1}$ and replacing $ x_{i}$ by $ x_{i}' : = x_{k}^{-1}x_{i}x_{k}$ for $ i =1,2, \dots , k-1$. Since $ \Phi_{J}^{+}$ is the set of positive roots for some Coxeter system that is isomorphic to the $A_{k}$ Coxeter system, and since $ \Phi_{x_{k}}$ is a highest based rectangle within $ \Phi^{+}_{J}$, we can apply Proposition \ref{bhgenrectangle} to conclude that $ x_{k} = \nu(\alpha_{r} , \Pi_{K} )$ for some $ r \in J$ and where $ K = J \sm \{ r \} $. Note that by construction of $ \nu(\alpha_{r} , \Pi_{K} )$, we have that:

    $$ x_{k} =\nu(\alpha_{r} , \Pi_{K} ) = w_{J}w_{K} $$
    where $ w_{J}$ and $ w_{K}$ denote the longest elements of $ W_{J}$ and $W_{K}$ respectively. Since these longest elements are involutions, we get:

    $$ x_{k}^{-1} = (w_{J}w_{K})^{-1} = w_{K}^{-1}w_{J}^{-1} = w_{K}w_{J}$$

    $$  = w_{J}w_{J}^{-1}w_{K}w_{J} = w_{J}w_{J}w_{K}w_{J}^{-1} = w_{J}w_{w_{J}Kw_{J}^{-1}} = \nu(\alpha_{w_{J}rw_{J}^{-1}} , \Pi_{w_{J}Kw_{J}^{-1}})$$
    But because $ (W_{J} , J)$ is isomorphic to the $ A_{k}$ Coxeter system, we know that conjugating $ J$ by $ w_{J}$ induces a non-trivial automorphism of the Coxeter diagram of $ (W_{J} ,J)$. More precisely, if $ s \in J$, then $ w_{J}sw_{J}^{-1} \in J$ will be the simple reflection obtained by reflecting $s$ along the center of the Coxeter diagram of $ (W_{J} , J)$. It follows that conjugating the subset $ K \seq J$ by the longest element $ w_{J}$ sends $ K$ to its reflection by the center of the Coxeter diagram of $ (W_{J} ,J)$. Since $ w_{J}Kw_{J}^{-1}$ is the reflection of $ K$ by the center of the Coxeter diagram of $ (W_{J} ,J)$, we conclude that the left inversion set of $ x_{k}^{-1} = \nu(\alpha_{w_{J}rw_{J}^{-1}} , \Pi_{w_{J}Kw_{J}^{-1}})$ is the mirror reflection of the left inversion set of $ x_{k}$ in the root poset of $ \Phi_{J}^{+}$. This shows that performing the  compatible subtriangle flip defined by $ \Phi_{J}^{+}$ replaces $ x_{k}$ with $ x_{k}^{-1}$.

    Let us now consider what happens to the $ x_{1} , x_{2} , \dots , x_{k-1}$. Note that $ x_{1}, x_{2} , \dots , x_{k}$ are the terminal edges of a Coxeter $ k$-cube in $ (W_{J} , J)$. Furthermore, since $ (W_{J} ,J)$ is isomorphic to $ A_{k}$, and since $ x_{k}$ corresponds to the highest based rectangle of $ \Phi^{+}_{J}$, we must have that the $k$-cube determined by $ x_{1} , x_{2} , \dots , x_{k}$ was built by applying the recursive process as described in the proof of Proposition \ref{ncubeexists}. In particular, we have the following Coxeter squares: 

    \[ \begin{tikzcd}
\sbullet \arrow[rr, "x_{i}"] & & \sbullet \\
\\
\sbullet \arrow[uu, "x_{k}"] \arrow[rr, swap, "x_{i}'"] & & \sbullet \arrow[uu, swap, "x_{k}"]
\end{tikzcd}
\]
where $ x_{k}x_{i}' = x_{i}x_{k}$ for all $ i =1,2, \dots ,k-1$. Flipping the edges labeled $ x_{k}$ in the above Coxeter square results in the following Coxeter square:  
\[ \begin{tikzcd}
\sbullet \arrow[rr, "x_{i}'"] & & \sbullet \\
\\
\sbullet \arrow[uu, "x_{k}^{-1}"] \arrow[rr, swap, "x_{i}"] & & \sbullet \arrow[uu, swap, "x_{k}^{-1}"]
\end{tikzcd}
\]
Also, note that solving for $ x_{i}'$ gives $ x_{i}' = x_{k}^{-1}x_{i}x_{k}$. Hence, by flipping the edges labeled with $ x_{k}$, we obtain a new set of terminal edges given by $ x_{1}' , x_{2}', \dots, x_{k-1}' , x_{k}^{-1}$. Furthermore, since we proved earlier that $ x_{k}^{-1} = \nu(\alpha_{w_{J}rw_{J}^{-1}} , \Pi_{w_{J}Kw_{J}^{-1}})$ where the left inversion set of $ x_{k}$ is the highest based rectangle in $ \Phi_{J}^{+}$, it follows that the left inversion sets of $x_{1} , x_{2} , \dots, x_{k-1}$ are all contained in the standard parabolic subsystem $ (W_{w_{J}Kw_{J}^{-1}} ,w_{J}Kw_{J}^{-1} )$. Since $ x_{k}^{-1} = \nu(\alpha_{w_{J}rw_{J}^{-1}} , \Pi_{w_{J}Kw_{J}^{-1}})$ sends $ w_{J}Kw_{J}^{-1}$ to $ K$, where $ K$ and $ w_{J}Kw_{J}^{-1}$ are reflections of each other by the center of the Coxeter diagram of $ (W_{J} , J)$, it follows that the left inversion set of $ x_{i}' = x_{k}^{-1}x_{i}(x_{k}^{-1})^{-1}$ will be the mirror image of the left inversion set of $ x_{i}$ in $\Phi^{+}_{J}$ for $ i =1,2, \dots , k-1$. Hence, when the compatible subtriangle flip is performed on $ \Phi^{+}_{J}$, the left inversion sets of $ x_{1} , x_{2} , \dots x_{k-1}, x_{k}$ become the left inversion sets of $ x_{1}' , x_{2}' , \dots x_{k-1}', x_{k}^{-1} $. 

Since the based rectangle partition $P$ corresponds to the Coxeter $n$-cube $ X$, it follows that $X$ was formed via the recursive process as described in the proof of Proposition \ref{ncubeexists}. In particular, since $ x_{k+1} , x_{k+2} , \dots ,x_{n}$ are the terminal edges whose left inversion sets lie outside of the compatible subtriangle $ \Phi^{+}_{J}$, it follows that we obtain the following Coxeter squares:

\[ \begin{tikzcd}
\sbullet \arrow[rr, "x_{k}"] & & \sbullet \\
\\
\sbullet \arrow[uu, "x_{j}"] \arrow[rr, swap, "x_{k}'"] & & \sbullet \arrow[uu, swap, "x_{j}"]
\end{tikzcd}
\]
where $ x_{k}x_{j} = x_{j}x_{k}'$ for $ j = k+1 , k+2, \dots , n$. Note that when we flip the edges that are parallel to the edge labeled $ x_{k}$ in the above Coxeter square (including the edge labeled $ x_{k}$), the element $ x_{j}$ will still be a terminal edge for $ j = k+1, k+2, \dots, n$. Thus, $ x_{k+1}, x_{k+2}, \dots , x_{n}$ will be terminal edges of the Coxeter cube that arises after performing the compatible subtriangle flip determined by $ \Phi^{+}_{J}$.

To summarize, we have have shown that performing the compatible subtriangle flip corresponding to $ \Phi^{+}_{J}$ sends the left inversion sets of $ x_{1} , \dots , x_{k-1}, x_{k} , x_{k+1}, \dots ,x_{n}$ to the left inversion sets of $ x_{1}' , \dots , x_{k-1}' ,x_{k}^{-1} , x_{k+1}, \dots , x_{n}$. But we showed that the elements $x_{1}' , \dots , x_{k-1}' ,x_{k}^{-1} , x_{k+1}, \dots , x_{n}$ are just the a different set of terminal edges after performing an edge flip on the Coxeter $n$-cube $X$. This shows that performing a compatible subtriangle flip corresponds to reversing the direction of all oriented edges that are parallel to some terminal edge $x_{k}$. 

Suppose now that $ x_{k}$ is some arbitrary terminal edge and that we wish to flip all edges that are parallel to $ x_{k}$ (including $x_{k}$). Note that $ \Phi_{x_{k}}$ will be some based rectangle in the root poset of $A_{n}$. But note that any based rectangle can be viewed as a highest based rectangle of some irreducible standard parabolic subsystem $ (W_{J} , J)$ of $A_{n}$. Hence, $ \Phi_{J}^{+}$ will be a compatible subtriangle of $P$. Furthermore, the compatible subtriangle flip by $ \Phi_{J}^{+}$ corresponds to flipping all edges parallel to $ x_{k}$. This shows that if we decide to flip all edges that are parallel to some terminal edge $ x_{k}$, then there is some compatible subtriangle flip corresponding to it.

This completes the proof that performing a sequence of compatible subtriangle flips corresponds to reorienting $X$.
\end{proof}

Before we proceed with the next few results, we need to define some terminology regarding graph theory. We define a \emph{binary tree} to be a directed graph that satisfies the following properties:

\begin{enumerate}
    \item There is a designated vertex called the \emph{root node}. The root node has an indegree of zero.

    \item The graph is connected, meaning that if $ v_{1}$ and $ v_{2}$ are vertices of the graph, then there exists a path of edges connecting $ v_{1}$ and $ v_{2}$ (paths ignore the direction of the edges).

    \item The graph does not have any cycles (cycles also ignore the direction of the edges).

    \item If $ v$ is a vertex of the graph, then the outdegree of $v$ is either $0$ or $2$.

\end{enumerate}

\begin{ex}
    Here are two examples of binary trees. The root nodes have been placed at the top of the trees. All edges are oriented downwards away from the root node.
\vspace{.5cm}
\begin{center}
\begin{tikzpicture}
    \coordinate (A) at (0,0);
    \coordinate (B) at (2,-2);
    \coordinate (C) at (-2, -2);
    
    \coordinate (E) at (1, -3);
    \coordinate (F) at (3, -3);

    \draw[fill=blue , blue] (A) circle (3pt);
    \draw[fill=blue , blue] (B) circle (3pt);
    \draw[fill=blue , blue] (C) circle (3pt);
    
    \draw[fill=blue , blue] (E) circle (3pt);
    \draw[fill=blue , blue] (F) circle (3pt);

    \draw[blue , thick] (A)--(B);
    \draw[blue , thick] (A)--(C);
    \draw[blue , thick] (B)--(E);
    \draw[blue , thick] (B)--(F);
\end{tikzpicture}
\hspace{1cm}
\begin{tikzpicture}
    \coordinate (A) at (0,0);
    \coordinate (B) at (1,-2);
    \coordinate (C) at (-1,-2);

    \draw[fill=blue , blue] (A) circle (3pt);
    \draw[fill=blue , blue] (B) circle (3pt);
    \draw[fill=blue , blue] (C) circle (3pt);

    \draw[blue , thick] (A)--(B);
    \draw[blue , thick] (A)--(C);
    
\end{tikzpicture}
\end{center}

\end{ex}

A \emph{leaf} of a binary tree is a vertex of the binary tree whose outdegree is zero. A \emph{child} of a vertex $A$ is a vertex $B$ such that there is a single directed edge from $ A$ to $B$. If a vertex $ A$ has two children $B$ and $C$, we say that $ B$ is the left child of $A$ if $B$ lies to the left of $ C$ in the diagram of the binary tree. Analogously, we call $ C$ the right child of $A$. The \emph{subtree defined by the vertex $A$} is the collection of all vertices (including $A$) and directed edges that can be reached from $ A$ by going \textquotedblleft downward" in the diagram of the binary tree. 

\begin{ex}
    Let $F_{1}$ denote the following binary tree:

\vspace{.25cm}
\begin{center}
    \begin{tikzpicture}
    \coordinate (A) at (0,0);
    \coordinate (B) at (-2, -2); 
    \coordinate (C) at (2,-2);
    \coordinate (D) at (-3,-3);
    \coordinate (E) at (-1 ,-3 );
    \coordinate (F) at ( 1,-3 );
    \coordinate (G) at (3 ,-3 );
    \coordinate (H) at (0 ,-4 );
    \coordinate (I) at (2 , -4);

    \node[] at (0,.5) {A};
    \node[] at (-2,-1.5) {B};
    \node[] at (2,-1.5) {C};
    \node[] at (-3,-2.5) {D};
    \node[] at (-1, -2.5) {E};
    \node[] at (1,-2.5) {F};
    \node[] at (3, -2.5) {G};
    \node[] at (0,-3.5) {H};
    \node[] at (2, -3.5) {I};

    \draw[fill=blue , blue] (A) circle (3pt);
    \draw[fill=blue , blue] (C) circle (3pt);
    \draw[fill=blue , blue] (B) circle (3pt);
    \draw[fill=blue , blue] (D) circle (3pt);
    \draw[fill=blue , blue] (E) circle (3pt);
    \draw[fill=blue , blue] (F) circle (3pt);
    \draw[fill=blue , blue] (G) circle (3pt);
    \draw[fill=blue , blue] (H) circle (3pt);
    \draw[fill=blue , blue] (I) circle (3pt);

    \draw[blue , thick] (A)--(C);
    \draw[blue , thick] (A)--(B);
    \draw[blue , thick] (B)--(D);
    \draw[blue , thick] (B)--(E);
    \draw[blue , thick] (C)--(F);
    \draw[blue , thick] (C)--(G);
    \draw[blue , thick] (F)--(H);
    \draw[blue , thick] (F)--(I);
    
\end{tikzpicture}
\end{center}
The above binary tree has five leaves: $ D$, $E$, $H$, $I$, and $G$. The vertex $C$ has two children: $ F$ and $ G$. $F$ is the left child of $C$ and $ G$ is the right child of $C$. The subtree defined by $C$ is the following subgraph:

\begin{center}
    \begin{tikzpicture}
     
    \coordinate (C) at (2,-2);
    
    \coordinate (F) at ( 1,-3 );
    \coordinate (G) at (3 ,-3 );
    \coordinate (H) at (0 ,-4 );
    \coordinate (I) at (2 , -4);

    \node[] at (2,-1.5) {C};
    
    \node[] at (1,-2.5) {F};
    \node[] at (3, -2.5) {G};
    \node[] at (0,-3.5) {H};
    \node[] at (2, -3.5) {I};

    \draw[fill=blue , blue] (C) circle (3pt);
    
    \draw[fill=blue , blue] (F) circle (3pt);
    \draw[fill=blue , blue] (G) circle (3pt);
    \draw[fill=blue , blue] (H) circle (3pt);
    \draw[fill=blue , blue] (I) circle (3pt);

    \draw[blue , thick] (C)--(F);
    \draw[blue , thick] (C)--(G);
    \draw[blue , thick] (F)--(H);
    \draw[blue , thick] (F)--(I);
    
\end{tikzpicture}
\end{center}

\end{ex}

Let $ F_{1}$ and $F_{2}$ denote two binary trees. A \emph{morphism of binary trees} is a morphism of directed graphs $\phi : F_{1} \rightarrow F_{2}$ such that $\phi$ maps the root node of $ F_{1}$ to the root node of $F_{2}$. An \emph{isomorphism of binary trees} is a morphism of binary trees $ \phi : F_{1} \rightarrow F_{2}$ such that $ \phi^{-1} : F_{2} \rightarrow F_{1}$ exists and $ \phi^{-1}$ is a morphism of binary trees. If there exists an isomorphism of binary trees $ \phi : F_{1} \rightarrow F_{2}$, then we say that $F_{1}$ and $F_{2}$ are \emph{isomorphic}.

Let $ F_{1}$ denote a binary tree. Let $ X$ be a vertex of $F_{1}$. We define a \emph{subtree flip at the vertex $X$} to be the operation on the diagram of $ F_{1}$ that replaces the subtree of $X$ with its mirror image by the vertical line that runs through the vertex $ X$.

\begin{ex}
    Let $F_{1}$ denote the following binary tree:

\begin{center}
    \begin{tikzpicture}
    \coordinate (A) at (0,0);
    \coordinate (B) at (-2, -2); 
    \coordinate (C) at (2,-2);
    \coordinate (D) at (-3,-3);
    \coordinate (E) at (-1 ,-3 );
    \coordinate (F) at ( 1,-3 );
    \coordinate (G) at (3 ,-3 );
    \coordinate (H) at (0 ,-4 );
    \coordinate (I) at (2 , -4);

    \node[] at (0,.5) {A};
    \node[] at (-2,-1.5) {B};
    \node[] at (2,-1.5) {C};
    \node[] at (-3,-2.5) {D};
    \node[] at (-1, -2.5) {E};
    \node[] at (1,-2.5) {F};
    \node[] at (3, -2.5) {G};
    \node[] at (0,-3.5) {H};
    \node[] at (2, -3.5) {I};

    \draw[fill=blue , blue] (A) circle (3pt);
    \draw[fill=blue , blue] (C) circle (3pt);
    \draw[fill=blue , blue] (B) circle (3pt);
    \draw[fill=blue , blue] (D) circle (3pt);
    \draw[fill=blue , blue] (E) circle (3pt);
    \draw[fill=blue , blue] (F) circle (3pt);
    \draw[fill=blue , blue] (G) circle (3pt);
    \draw[fill=blue , blue] (H) circle (3pt);
    \draw[fill=blue , blue] (I) circle (3pt);

    \draw[blue , thick] (A)--(C);
    \draw[blue , thick] (A)--(B);
    \draw[blue , thick] (B)--(D);
    \draw[blue , thick] (B)--(E);
    \draw[blue , thick] (C)--(F);
    \draw[blue , thick] (C)--(G);
    \draw[blue , thick] (F)--(H);
    \draw[blue , thick] (F)--(I);
    
\end{tikzpicture}
\end{center}
The subtree defined by $ C$ is the following:

\begin{center}
    \begin{tikzpicture}
     
    \coordinate (C) at (2,-2);
    
    \coordinate (F) at ( 1,-3 );
    \coordinate (G) at (3 ,-3 );
    \coordinate (H) at (0 ,-4 );
    \coordinate (I) at (2 , -4);

    \node[] at (2,-1.5) {C};
    
    \node[] at (1,-2.5) {F};
    \node[] at (3, -2.5) {G};
    \node[] at (0,-3.5) {H};
    \node[] at (2, -3.5) {I};

    \draw[fill=blue , blue] (C) circle (3pt);
    
    \draw[fill=blue , blue] (F) circle (3pt);
    \draw[fill=blue , blue] (G) circle (3pt);
    \draw[fill=blue , blue] (H) circle (3pt);
    \draw[fill=blue , blue] (I) circle (3pt);

    \draw[blue , thick] (C)--(F);
    \draw[blue , thick] (C)--(G);
    \draw[blue , thick] (F)--(H);
    \draw[blue , thick] (F)--(I);
    
\end{tikzpicture}
\end{center}
The mirror image of the above subtree is the following:

\begin{center}
    \begin{tikzpicture}
     
    \coordinate (C) at (-2,-2);
    
    \coordinate (F) at ( -1,-3 );
    \coordinate (G) at (-3 ,-3 );
    \coordinate (H) at (-0 ,-4 );
    \coordinate (I) at (-2 , -4);

    \node[] at (-2,-1.5) {C};
    
    \node[] at (-1,-2.5) {F};
    \node[] at (-3, -2.5) {G};
    \node[] at (-0,-3.5) {H};
    \node[] at (-2, -3.5) {I};

    \draw[fill=blue , blue] (C) circle (3pt);
    
    \draw[fill=blue , blue] (F) circle (3pt);
    \draw[fill=blue , blue] (G) circle (3pt);
    \draw[fill=blue , blue] (H) circle (3pt);
    \draw[fill=blue , blue] (I) circle (3pt);

    \draw[blue , thick] (C)--(F);
    \draw[blue , thick] (C)--(G);
    \draw[blue , thick] (F)--(H);
    \draw[blue , thick] (F)--(I);
    
\end{tikzpicture}
\end{center}
Hence, when we perform the subtree flip at the vertex $C$ on the diagram of $F_{1}$, we obtain the new diagram:

\begin{center}
    \begin{tikzpicture}
    \coordinate (A) at (0,0);
    \coordinate (B) at (-2, -2); 
    \coordinate (C) at (2,-2);
    \coordinate (D) at (-3,-3);
    \coordinate (E) at (-1 ,-3 );
    \coordinate (G) at ( 1,-3 );
    \coordinate (F) at (3 ,-3 );
    \coordinate (H) at (2 ,-4 );
    \coordinate (I) at (4 , -4);

    \node[] at (0,.5) {A};
    \node[] at (-2,-1.5) {B};
    \node[] at (2,-1.5) {C};
    \node[] at (-3,-2.5) {D};
    \node[] at (-1, -2.5) {E};
    \node[] at (1,-2.5) {G};
    \node[] at (3, -2.5) {F};
    \node[] at (2,-3.5) {I};
    \node[] at (4, -3.5) {H};

    \draw[fill=blue , blue] (A) circle (3pt);
    \draw[fill=blue , blue] (C) circle (3pt);
    \draw[fill=blue , blue] (B) circle (3pt);
    \draw[fill=blue , blue] (D) circle (3pt);
    \draw[fill=blue , blue] (E) circle (3pt);
    \draw[fill=blue , blue] (F) circle (3pt);
    \draw[fill=blue , blue] (G) circle (3pt);
    \draw[fill=blue , blue] (H) circle (3pt);
    \draw[fill=blue , blue] (I) circle (3pt);

    \draw[blue , thick] (A)--(C);
    \draw[blue , thick] (A)--(B);
    \draw[blue , thick] (B)--(D);
    \draw[blue , thick] (B)--(E);
    \draw[blue , thick] (C)--(F);
    \draw[blue , thick] (C)--(G);
    \draw[blue , thick] (F)--(H);
    \draw[blue , thick] (F)--(I);
    
\end{tikzpicture}
\end{center}
\end{ex}

\begin{prop} \label{binarytreeisom}
    Let $ F_{1}$ and $F_{2}$ denote two binary trees. Then $F_{1}$ and $F_{2}$ are isomorphic if and only if the diagram of $F_{2}$ can be obtained from the diagram of $F_{1}$ via a sequence of subtree flips. 
\end{prop}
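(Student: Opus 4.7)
The plan is to prove both implications separately and to handle the forward direction by induction on the number of vertices.

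For the easy direction, suppose the diagram of $F_2$ arises from the diagram of $F_1$ by a sequence of subtree flips. A single subtree flip at a vertex $X$ produces a new diagram whose underlying directed graph is canonically isomorphic to the original one via an isomorphism that fixes every vertex (the flip merely changes the left/right placement of the two children of $X$ together with their descendants). Composing these per-flip isomorphisms produces an isomorphism of binary trees $F_1 \to F_2$.

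For the harder direction, I would induct on the number of vertices of $F_1$. The base case in which $F_1$ consists of a single root vertex is immediate: any binary tree isomorphic to it is also a single root vertex, and no flips are required. For the inductive step, let $\phi: F_1 \to F_2$ be an isomorphism of binary trees, and write $L_i,R_i$ for the left and right children of the root of $F_i$ ($i=1,2$); these exist since the root has outdegree $2$ (the case where the root of $F_1$ is itself a leaf is covered by the base case). Since $\phi$ fixes the root and is a morphism of directed graphs, it restricts to a bijection $\{L_1,R_1\}\to\{L_2,R_2\}$, giving two cases. If $\phi(L_1)=L_2$ and $\phi(R_1)=R_2$, then $\phi$ restricts to isomorphisms of binary trees between the subtree defined by $L_1$ and the subtree defined by $L_2$, and between the subtree defined by $R_1$ and that defined by $R_2$; by induction each of these pairs of subtree diagrams differ by a sequence of subtree flips. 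Performed inside $F_1$, those are exactly subtree flips of $F_1$ (since the subtree-flip operation at a vertex $X$ depends only on the subtree at $X$), which converts the diagram of $F_1$ into the diagram of $F_2$. If instead $\phi(L_1)=R_2$ and $\phi(R_1)=L_2$, first perform a subtree flip at the root of $F_1$; this yields a diagram $F_1'$ whose left and right root-children are (the old) $R_1$ and $L_1$ respectively, and now $\phi$ witnesses an isomorphism $F_1'\to F_2$ of the form handled in the previous case.

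The main technical point, and the only real obstacle, is to verify cleanly that a subtree flip at a vertex $X$ lying in the subtree rooted at $L_1$ (resp. $R_1$) is literally the same operation as a subtree flip at $X$ in the whole tree $F_1$; this is immediate from the definition because the subtree of $X$ inside $F_1$ coincides with the subtree of $X$ inside the subtree at $L_1$ (resp. $R_1$), and the rest of the diagram is left untouched. With this locality observation in place, the induction goes through without further complication.
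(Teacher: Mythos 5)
Your proof is correct and follows essentially the same strategy as the paper: induction on the number of vertices, splitting into the two cases according to whether the isomorphism preserves or swaps the left/right children of the root, and using a root flip to reduce the swapped case to the preserved case. The only cosmetic difference is that you flip $F_{1}$ at its root in the swapped case while the paper flips $F_{2}$; since a subtree flip is an involution, these are interchangeable.
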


\begin{proof}
    Note that performing a subtree flip on a binary tree does not change the actualy structure of the binary tree. Hence, if the diagram of $ F_{2}$ can be obtained from the diagram of $F_{1}$ via a sequence of subtree flips, then it follows that $ F_{1}$ and $F_{2}$ are isomorphic.

    Suppose now that $ F_{1}$ and $F_{2}$ are isomorphic. Let $ \phi : F_{1} \to F_{2}$ denote an isomorphism. Let $ A$ be the root node of $F_{1}$. Let $ B$ denote the left child of $A$, and let $C$ denote the right child of $A$. Note that $ \phi(A)$ will be the root node of $F_{2}$. Furthermore, $ \phi(B)$ and $ \phi(C)$ will be the children of $ \phi(A)$. Since $ A \mapsto \phi(A)$, where $ \phi$ is an isomorphism of binary trees, it follows that the subtree defined at $A$ is isomorphic to the subtree defined at $\phi(A)$. Similarly, since $ B \mapsto \phi(B)$, it follows that the subtree defined at $B$ is isomorphic to the subtree defined at $\phi(B)$. There are two cases to consider:

    \begin{itemize}
        \item Case 1: $\phi(B)$ is the left child of $ \phi(A)$, and $\phi(C)$ is the right child of $\phi(A)$.

        \item Case 2: $ \phi(C)$ is the left child of $ \phi(A)$, and $\phi(B)$ is the right child of $\phi(A)$.
    \end{itemize}

    Suppose that Case 1 occurs. Since $ B$ and $ \phi(B)$ are the left children of $ A$ and $ \phi(A)$ respectively, and since the subtree defined at $B$ has strictly fewer vertices than the entire binary tree $F_{1}$, one can use induction (on the number of vertices of a binary tree) to show that the diagram of the subtree defined at $B$ can be transformed into the diagram of the subtree defined at $ \Phi(B)$ via a sequence of subtree flips. Similarly, since $ C$ and $ \phi(C)$ are the right children of $A$ and $ \phi(A)$ respectively, one can use induction to show that the diagram of the subtree defined at $C$ can be transformed into the diagram of the subtree defined at $ \Phi(C)$ via a sequence of subtree flips. This process shows that the entire diagram of $F_{1}$ can be transformed into the diagram of $F_{2}$ via a sequence of subtree flips.

    Suppose now that Case 2 occurs. Thus, $ \phi(C)$ is the left child of $ \phi(A)$, and $\phi(B)$ is the right child of $\phi(A)$. Perform a subtree flip at $ \phi(A)$ on the diagram of $F_{2}$. This will cause $ \phi(B)$ to become the left child of $\phi(A)$ and $\phi(C)$ to become the right child of $ \phi(A)$. But now we have transformed Case 2 into Case 1. Hence, by the conclusion of Case 1, we deduce that the diagram of $ F_{1}$ can be transformed into the diagram of $ F_{2}$ via a sequence of subtree flips.
\end{proof}

\begin{thm}
    There is a bijection between the set of Coxeter $n$-cubes modulo reoriention in the $A_{n}$ Coxeter system and the set of binary trees with $ n+1$ leaves modulo isomorphism.
\end{thm}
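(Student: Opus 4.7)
The plan is to apply Theorem \ref{firstbijection} to replace \textquotedblleft Coxeter $n$-cubes modulo reorientation in $A_n$'' with \textquotedblleft based rectangle partitions of the $A_n$ root poset modulo compatible subtriangle flips'', and then to exhibit a bijection from the latter onto binary trees with $n+1$ leaves modulo isomorphism by invoking Proposition \ref{binarytreeisom}, which identifies binary tree isomorphism with the equivalence relation generated by subtree flips.

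First I would define a map $T$ sending a based rectangle partition $P$ of $\Phi^+$ to a binary tree recursively. By Proposition \ref{highestbasedrectangle}, $P$ contains a unique highest based rectangle $R$; by Proposition \ref{bhgenrectangle}(3), $R = \Phi_{\nu(\alpha_{s_r},\Pi_{S\setminus\{s_r\}})}$ for a unique $s_r \in S$. Removing $s_r$ from the Coxeter diagram of $A_n$ yields the factorization $A_{r-1} \times A_{n-r}$ (with the convention that $A_0$ is the empty Coxeter system), and $\Phi^+ \setminus R = \Phi^+_{\{s_1,\dots,s_{r-1}\}} \coprod \Phi^+_{\{s_{r+1},\dots,s_n\}}$, which by Theorem \ref{basedrectanglepartition} inherits based rectangle partitions $P_L$ and $P_R$ from $P$. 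I declare $T(P)$ to be the binary tree whose root has left subtree $T(P_L)$ and right subtree $T(P_R)$, with the base case $T(\emptyset)$ being the single leaf associated to $A_0$. A short induction using the identity $r + (n-r+1) = n+1$ shows that $T(P)$ has exactly $n+1$ leaves.

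Next I would construct the inverse. Given a binary tree $F$ with $n+1$ leaves, let $L$ denote the number of leaves in the left subtree of its root, set $r := L$, and place the highest based rectangle $\Phi_{\nu(\alpha_{s_r},\Pi_{S\setminus\{s_r\}})}$ as the topmost piece of the partition; then recurse on the left subtree inside the standard parabolic subsystem of type $A_{r-1}$ generated by $s_1,\dots,s_{r-1}$ and on the right subtree inside the subsystem of type $A_{n-r}$ generated by $s_{r+1},\dots,s_n$. Theorem \ref{partitiontoncube} guarantees the resulting collection assembles into a genuine based rectangle partition, and by construction this map inverts $T$, giving a bijection between binary trees with $n+1$ leaves and based rectangle partitions of the $A_n$ root poset.

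Finally I would show that $T$ interchanges compatible subtriangle flips with subtree flips. Every compatible subtriangle $\Phi^+_J$ of $P$ corresponds to an irreducible standard parabolic subsystem $(W_J,J) \cong A_k$ that surfaces at some stage of the recursion defining $T(P)$, and therefore corresponds to a unique internal node $v$ of $T(P)$; flipping $\Phi^+_J$ by its vertical axis of symmetry swaps the left and right halves of the root poset of $A_k$, which under the recursive dictionary is precisely the subtree flip at $v$. Conversely, each internal node of $T(P)$ was introduced by some irreducible parabolic subsystem, so every subtree flip is realized by a unique compatible subtriangle flip. Combining Proposition \ref{binarytreeisom} (to pass from subtree flips to abstract tree isomorphism) with Theorem \ref{firstbijection} (to pass from compatible subtriangle flips back to Coxeter $n$-cube reorientations) yields the desired bijection. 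The main obstacle is verifying that a compatible subtriangle flip induces exactly a subtree flip at the corresponding internal node of $T(P)$, with no further perturbation of the tree; the geometry of the reflection by the longest element $w_J$, already worked out in the proof of Theorem \ref{firstbijection}, takes care of this, and what remains is the bookkeeping identifying the internal node of $T(P)$ with the parabolic subsystem $(W_J,J)$, which is immediate once the recursive definition of $T$ is in hand.
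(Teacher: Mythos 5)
Your proposal is correct and takes essentially the same route as the paper: reduce via Theorem \ref{firstbijection} to based rectangle partitions modulo compatible subtriangle flips, build the partition--tree correspondence by peeling off the highest based rectangle and recursing on the two parabolic factors of type $A$, match compatible subtriangle flips with subtree flips, and conclude with Proposition \ref{binarytreeisom}. The only differences are presentational: you define the partition-to-tree map by structural recursion where the paper draws the tree directly on the root poset, and your indexing has a harmless slip (the deleted node is the \emph{base} of the highest rectangle, i.e.\ its unique simple root, which differs from the index $r$ of the Brink--Howlett generator $\nu(\alpha_{s_r},\Pi_{S\setminus\{s_r\}})$ by the diagram automorphism of $A_n$).
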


\begin{proof}
    By Theorem \ref{firstbijection}, it suffices to establish a bijection between based rectangle partitions of $A_{n}$ modulo compatible subtrianngle flips and binary trees with $n+1$ leaves.

    Let $ P$ denote a based rectangle partition of the root poset of $A_{n}$. We construct a diagram of a binary tree associated to $P$ as follows:

    \begin{enumerate}
        \item Place a vertex at the highest root of the $A_{n}$ root poset. This vertex will be the root node of the binary tree.

        \item  Given some already constructed vertex $X$ of the binary tree that lies on a root of the root poset of $A_{n}$, draw a (directed) line segment from $X$ diagonally downward and to the left of $X$ along the root poset (if such a directed line segment already exists, omit this step). Also draw a (directed) line segment from $X$ diagonally downward and to the right of $X$ along the root poset (if such a directed line segment already exists, omit this step). Once these line segments exit the diagram of the root poset of $A_{n}$, draw a vertex at the end of each of these line segments. 

        \item  Suppose that one of the (directed) line segments of the already constructed graph passes through a root $\gamma$, such that $ \gamma$ is the highest root of some based rectangle. If $\gamma$ does not already have a vertex placed on it, place a vertex at the root $\gamma$.

        \item Keep iterating through steps (2) and (3) until you cannot add any more vertices or line segments.
        
    \end{enumerate}
    Note that this procedure guarantees that the directed graph produced will be a binary tree since at each vertex within the root poset of $A_{n}$, there will always exist two directed line segments leaving such a vertex. Furthermore, since any based rectangle partition of $A_{n}$ will have $n$ based rectangles, it follows that the binary tree will bifurcate $n$ times, resulting in $n+1$ leaves. We now give an explicit example of this process. Consider the following based rectangle partition $P$ of the root poset of $A_{4}$:

    \begin{center}

\begin{tikzpicture}
\foreach  \y in {0,...,3}
    \foreach \x in {0,...,\y} 
        \fill [black] ( 3*\x -1.5*\y , -1.5*\y) circle [radius=0.1];

\node[below = 7pt] at (-4.5,-4.5) {$\alpha_{1}$};
\node[below = 7pt] at (-1.5,-4.5) {$\alpha_{2}$};
\node[below = 7pt] at (1.5,-4.5) {$\alpha_{3}$};
\node[below = 7pt] at (4.5,-4.5) {$\alpha_{4}$};

\node[below = 7pt] at (-3,-3) {$\alpha_{1} + \alpha_{2}$};
\node[below = 7pt] at (0,-3) {$\alpha_{2} + \alpha_{3}$};
\node[below = 7pt] at (3,-3) {$\alpha_{3} + \alpha_{4}$};

\node[below = 7pt] at (-1.5,-1.5) {$\alpha_{1} + \alpha_{2}+ \alpha_{3}$};
\node[below = 7pt] at (1.5,-1.5) {$\alpha_{2} + \alpha_{3}+ \alpha_{4}$};
\node[below = 7pt] at (0,0) {$\alpha_{1} + \alpha_{2}+ \alpha_{3}+ \alpha_{4}$};

\draw (0,.5)--(-.5,0)--(4.5,-5)--(5,-4.5)--cycle;

\draw (-1.5,-1)--(-1, -1.5)--(-4.5,-5)--(-5,-4.5)--cycle;

\draw (0 , -2.5)--(-.5,-3)--(1.5,-5)--(2,-4.5)--cycle;

\draw (-1.5 , -4)--(-2, -4.5)--(-1.5,-5)--(-1, -4.5)--cycle;

\end{tikzpicture}

\end{center}
We start by placing a vertex (denoted by a blue dot) at the highest root of the root poset:

\begin{center}

\begin{tikzpicture}
\foreach  \y in {0,...,3}
    \foreach \x in {0,...,\y} 
        \fill [black] ( 3*\x -1.5*\y , -1.5*\y) circle [radius=0.1];

\fill [blue] (0,0) circle [radius=0.1];

\node[below = 7pt] at (-4.5,-4.5) {$\alpha_{1}$};
\node[below = 7pt] at (-1.5,-4.5) {$\alpha_{2}$};
\node[below = 7pt] at (1.5,-4.5) {$\alpha_{3}$};
\node[below = 7pt] at (4.5,-4.5) {$\alpha_{4}$};

\node[below = 7pt] at (-3,-3) {$\alpha_{1} + \alpha_{2}$};
\node[below = 7pt] at (0,-3) {$\alpha_{2} + \alpha_{3}$};
\node[below = 7pt] at (3,-3) {$\alpha_{3} + \alpha_{4}$};

\node[below = 7pt] at (-1.5,-1.5) {$\alpha_{1} + \alpha_{2}+ \alpha_{3}$};
\node[below = 7pt] at (1.5,-1.5) {$\alpha_{2} + \alpha_{3}+ \alpha_{4}$};
\node[below = 7pt] at (0,0) {$\alpha_{1} + \alpha_{2}+ \alpha_{3}+ \alpha_{4}$};

\draw (0,.5)--(-.5,0)--(4.5,-5)--(5,-4.5)--cycle;

\draw (-1.5,-1)--(-1, -1.5)--(-4.5,-5)--(-5,-4.5)--cycle;

\draw (0 , -2.5)--(-.5,-3)--(1.5,-5)--(2,-4.5)--cycle;

\draw (-1.5 , -4)--(-2, -4.5)--(-1.5,-5)--(-1, -4.5)--cycle;
\end{tikzpicture}
\end{center}
We now draw  downward (directed) diagonal line segments to the left and to the right of this blue root node. After these line segments exit the root poset, we place blue vertices at their endpoints: 
\begin{center}
\begin{tikzpicture}
\foreach  \y in {0,...,3}
    \foreach \x in {0,...,\y} 
        \fill [black] ( 3*\x -1.5*\y , -1.5*\y) circle [radius=0.1];

\fill [blue] (0,0) circle [radius=0.1];

\draw[blue] (0,0)-- (-5.5,-5.5);
\draw[blue] (0,0)--(5.5,-5.5);

\fill [blue] (-5.5,-5.5) circle [radius=0.1];
\fill [blue] (5.5,-5.5) circle [radius=0.1];

\node[below = 7pt] at (-4.5,-4.5) {$\alpha_{1}$};
\node[below = 7pt] at (-1.5,-4.5) {$\alpha_{2}$};
\node[below = 7pt] at (1.5,-4.5) {$\alpha_{3}$};
\node[below = 7pt] at (4.5,-4.5) {$\alpha_{4}$};

\node[below = 7pt] at (-3,-3) {$\alpha_{1} + \alpha_{2}$};
\node[below = 7pt] at (0,-3) {$\alpha_{2} + \alpha_{3}$};
\node[below = 7pt] at (3,-3) {$\alpha_{3} + \alpha_{4}$};

\node[below = 7pt] at (-1.5,-1.5) {$\alpha_{1} + \alpha_{2}+ \alpha_{3}$};
\node[below = 7pt] at (1.5,-1.5) {$\alpha_{2} + \alpha_{3}+ \alpha_{4}$};
\node[below = 7pt] at (0,0) {$\alpha_{1} + \alpha_{2}+ \alpha_{3}+ \alpha_{4}$};

\draw (0,.5)--(-.5,0)--(4.5,-5)--(5,-4.5)--cycle;

\draw (-1.5,-1)--(-1, -1.5)--(-4.5,-5)--(-5,-4.5)--cycle;

\draw (0 , -2.5)--(-.5,-3)--(1.5,-5)--(2,-4.5)--cycle;

\draw (-1.5 , -4)--(-2, -4.5)--(-1.5,-5)--(-1, -4.5)--cycle;
\end{tikzpicture}
\end{center}
Now note that the root $ \alpha_{1} + \alpha_{2} + \alpha_{3}$ is the highest root of the based rectangle $\{ \alpha_{1} , \alpha_{1} + \alpha_{2} ,  \alpha_{1} + \alpha_{2} + \alpha_{3} \}$. Hence, we place a blue vertex at $ \alpha_{1} + \alpha_{2} + \alpha_{3}$:
\begin{center}
\begin{tikzpicture}
\foreach  \y in {0,...,3}
    \foreach \x in {0,...,\y} 
        \fill [black] ( 3*\x -1.5*\y , -1.5*\y) circle [radius=0.1];

\fill [blue] (0,0) circle [radius=0.1];

\draw[blue] (0,0)-- (-5.5,-5.5);
\draw[blue] (0,0)--(5.5,-5.5);

\fill [blue] (-5.5,-5.5) circle [radius=0.1];
\fill [blue] (5.5,-5.5) circle [radius=0.1];

\fill [blue] (-1.5,-1.5) circle [radius = 0.1];

\node[below = 7pt] at (-4.5,-4.5) {$\alpha_{1}$};
\node[below = 7pt] at (-1.5,-4.5) {$\alpha_{2}$};
\node[below = 7pt] at (1.5,-4.5) {$\alpha_{3}$};
\node[below = 7pt] at (4.5,-4.5) {$\alpha_{4}$};

\node[below = 7pt] at (-3,-3) {$\alpha_{1} + \alpha_{2}$};
\node[below = 7pt] at (0,-3) {$\alpha_{2} + \alpha_{3}$};
\node[below = 7pt] at (3,-3) {$\alpha_{3} + \alpha_{4}$};

\node[below = 7pt] at (-1.5,-1.5) {$\alpha_{1} + \alpha_{2}+ \alpha_{3}$};
\node[below = 7pt] at (1.5,-1.5) {$\alpha_{2} + \alpha_{3}+ \alpha_{4}$};
\node[below = 7pt] at (0,0) {$\alpha_{1} + \alpha_{2}+ \alpha_{3}+ \alpha_{4}$};

\draw (0,.5)--(-.5,0)--(4.5,-5)--(5,-4.5)--cycle;

\draw (-1.5,-1)--(-1, -1.5)--(-4.5,-5)--(-5,-4.5)--cycle;

\draw (0 , -2.5)--(-.5,-3)--(1.5,-5)--(2,-4.5)--cycle;

\draw (-1.5 , -4)--(-2, -4.5)--(-1.5,-5)--(-1, -4.5)--cycle;
\end{tikzpicture}
\end{center}
Note that the blue vertex at $ \alpha_{1} + \alpha_{2} + \alpha_{3}$ already has a line segment directed downwards and to the left. Thus, we only need to draw a line segment from $ \alpha_{1} + \alpha_{2} + \alpha_{3}$ going downward and to the right, and draw a blue vertex at the end of this line segment once this line segment leaves the root poset:

\begin{center}
\begin{tikzpicture}
\foreach  \y in {0,...,3}
    \foreach \x in {0,...,\y} 
        \fill [black] ( 3*\x -1.5*\y , -1.5*\y) circle [radius=0.1];

\fill [blue] (0,0) circle [radius=0.1];

\draw[blue] (0,0)-- (-5.5,-5.5);
\draw[blue] (0,0)--(5.5,-5.5);

\fill [blue] (-5.5,-5.5) circle [radius=0.1];
\fill [blue] (5.5,-5.5) circle [radius=0.1];

\fill [blue] (-1.5,-1.5) circle [radius = 0.1];

\draw[blue] (-1.5,-1.5)--(2.5 ,-5.5);
\fill[blue] (2.5, -5.5) circle [radius = 0.1];

\node[below = 7pt] at (-4.5,-4.5) {$\alpha_{1}$};
\node[below = 7pt] at (-1.5,-4.5) {$\alpha_{2}$};
\node[below = 7pt] at (1.5,-4.5) {$\alpha_{3}$};
\node[below = 7pt] at (4.5,-4.5) {$\alpha_{4}$};

\node[below = 7pt] at (-3,-3) {$\alpha_{1} + \alpha_{2}$};
\node[below = 7pt] at (0,-3) {$\alpha_{2} + \alpha_{3}$};
\node[below = 7pt] at (3,-3) {$\alpha_{3} + \alpha_{4}$};

\node[below = 7pt] at (-1.5,-1.5) {$\alpha_{1} + \alpha_{2}+ \alpha_{3}$};
\node[below = 7pt] at (1.5,-1.5) {$\alpha_{2} + \alpha_{3}+ \alpha_{4}$};
\node[below = 7pt] at (0,0) {$\alpha_{1} + \alpha_{2}+ \alpha_{3}+ \alpha_{4}$};

\draw (0,.5)--(-.5,0)--(4.5,-5)--(5,-4.5)--cycle;

\draw (-1.5,-1)--(-1, -1.5)--(-4.5,-5)--(-5,-4.5)--cycle;

\draw (0 , -2.5)--(-.5,-3)--(1.5,-5)--(2,-4.5)--cycle;

\draw (-1.5 , -4)--(-2, -4.5)--(-1.5,-5)--(-1, -4.5)--cycle;
\end{tikzpicture}
\end{center}
But now we see that the root $ \alpha_{2} + \alpha_{3}$ is the highest root of the based rectangle $ \{  \alpha_{3}, \alpha_{2} + \alpha_{3} \}$. Since $ \alpha_{2} + \alpha_{3}$ lies on a (directed) line segment, we draw a blue vertex at $ \{ \alpha_{2} + \alpha_{3} \}$:
\begin{center}
\begin{tikzpicture}
\foreach  \y in {0,...,3}
    \foreach \x in {0,...,\y} 
        \fill [black] ( 3*\x -1.5*\y , -1.5*\y) circle [radius=0.1];

\fill [blue] (0,0) circle [radius=0.1];

\draw[blue] (0,0)-- (-5.5,-5.5);
\draw[blue] (0,0)--(5.5,-5.5);

\fill [blue] (-5.5,-5.5) circle [radius=0.1];
\fill [blue] (5.5,-5.5) circle [radius=0.1];

\fill [blue] (-1.5,-1.5) circle [radius = 0.1];

\draw[blue] (-1.5,-1.5)--(2.5 ,-5.5);
\fill[blue] (2.5, -5.5) circle [radius = 0.1];

\fill[blue] (0,-3) circle [radius = 0.1];

\node[below = 7pt] at (-4.5,-4.5) {$\alpha_{1}$};
\node[below = 7pt] at (-1.5,-4.5) {$\alpha_{2}$};
\node[below = 7pt] at (1.5,-4.5) {$\alpha_{3}$};
\node[below = 7pt] at (4.5,-4.5) {$\alpha_{4}$};

\node[below = 7pt] at (-3,-3) {$\alpha_{1} + \alpha_{2}$};
\node[below = 7pt] at (0,-3) {$\alpha_{2} + \alpha_{3}$};
\node[below = 7pt] at (3,-3) {$\alpha_{3} + \alpha_{4}$};

\node[below = 7pt] at (-1.5,-1.5) {$\alpha_{1} + \alpha_{2}+ \alpha_{3}$};
\node[below = 7pt] at (1.5,-1.5) {$\alpha_{2} + \alpha_{3}+ \alpha_{4}$};
\node[below = 7pt] at (0,0) {$\alpha_{1} + \alpha_{2}+ \alpha_{3}+ \alpha_{4}$};

\draw (0,.5)--(-.5,0)--(4.5,-5)--(5,-4.5)--cycle;

\draw (-1.5,-1)--(-1, -1.5)--(-4.5,-5)--(-5,-4.5)--cycle;

\draw (0 , -2.5)--(-.5,-3)--(1.5,-5)--(2,-4.5)--cycle;

\draw (-1.5 , -4)--(-2, -4.5)--(-1.5,-5)--(-1, -4.5)--cycle;
\end{tikzpicture}
\end{center}
The blue vertex at $ \alpha_{2} + \alpha_{3}$ already had a downward directed line segment to the right. Hence, we just need to draw a downward directed line segment from $ \alpha_{2} + \alpha_{3}$ to the left: 
\begin{center}
\begin{tikzpicture}
\foreach  \y in {0,...,3}
    \foreach \x in {0,...,\y} 
        \fill [black] ( 3*\x -1.5*\y , -1.5*\y) circle [radius=0.1];

\fill [blue] (0,0) circle [radius=0.1];

\draw[blue] (0,0)-- (-5.5,-5.5);
\draw[blue] (0,0)--(5.5,-5.5);

\fill [blue] (-5.5,-5.5) circle [radius=0.1];
\fill [blue] (5.5,-5.5) circle [radius=0.1];

\fill [blue] (-1.5,-1.5) circle [radius = 0.1];

\draw[blue] (-1.5,-1.5)--(2.5 ,-5.5);
\fill[blue] (2.5, -5.5) circle [radius = 0.1];

\fill[blue] (0,-3) circle [radius = 0.1];

\draw[blue] (0,-3)--(-2.5 , -5.5);
\fill[blue] (-2.5,-5.5) circle [radius = 0.1];

\node[below = 7pt] at (-4.5,-4.5) {$\alpha_{1}$};
\node[below = 7pt] at (-1.5,-4.5) {$\alpha_{2}$};
\node[below = 7pt] at (1.5,-4.5) {$\alpha_{3}$};
\node[below = 7pt] at (4.5,-4.5) {$\alpha_{4}$};

\node[below = 7pt] at (-3,-3) {$\alpha_{1} + \alpha_{2}$};
\node[below = 7pt] at (0,-3) {$\alpha_{2} + \alpha_{3}$};
\node[below = 7pt] at (3,-3) {$\alpha_{3} + \alpha_{4}$};

\node[below = 7pt] at (-1.5,-1.5) {$\alpha_{1} + \alpha_{2}+ \alpha_{3}$};
\node[below = 7pt] at (1.5,-1.5) {$\alpha_{2} + \alpha_{3}+ \alpha_{4}$};
\node[below = 7pt] at (0,0) {$\alpha_{1} + \alpha_{2}+ \alpha_{3}+ \alpha_{4}$};

\draw (0,.5)--(-.5,0)--(4.5,-5)--(5,-4.5)--cycle;

\draw (-1.5,-1)--(-1, -1.5)--(-4.5,-5)--(-5,-4.5)--cycle;

\draw (0 , -2.5)--(-.5,-3)--(1.5,-5)--(2,-4.5)--cycle;

\draw (-1.5 , -4)--(-2, -4.5)--(-1.5,-5)--(-1, -4.5)--cycle;
\end{tikzpicture}
\end{center}
Now note that $ \alpha_{2}$ is the highest root of the based rectangle $ \{ \alpha_{2}  \}$. Hence, draw a blue vertex at $ \alpha_{2}$, and draw a directed line segment downward from $ \alpha_{2}$ and to the right:
\begin{center}
\begin{tikzpicture}
\foreach  \y in {0,...,3}
    \foreach \x in {0,...,\y} 
        \fill [black] ( 3*\x -1.5*\y , -1.5*\y) circle [radius=0.1];

\fill [blue] (0,0) circle [radius=0.1];

\draw[blue] (0,0)-- (-5.5,-5.5);
\draw[blue] (0,0)--(5.5,-5.5);

\fill [blue] (-5.5,-5.5) circle [radius=0.1];
\fill [blue] (5.5,-5.5) circle [radius=0.1];

\fill [blue] (-1.5,-1.5) circle [radius = 0.1];

\draw[blue] (-1.5,-1.5)--(2.5 ,-5.5);
\fill[blue] (2.5, -5.5) circle [radius = 0.1];

\fill[blue] (0,-3) circle [radius = 0.1];

\draw[blue] (0,-3)--(-2.5 , -5.5);
\fill[blue] (-2.5,-5.5) circle [radius = 0.1];

\fill[blue] (-1.5, -4.5) circle [radius = 0.1];
\draw[blue] (-1.5, -4.5)--(-.5 , -5.5);
\fill[blue] (-.5,-5.5) circle [radius = 0.1];

\node[below = 7pt] at (-4.5,-4.5) {$\alpha_{1}$};
\node[below = 7pt] at (-1.5,-4.5) {$\alpha_{2}$};
\node[below = 7pt] at (1.5,-4.5) {$\alpha_{3}$};
\node[below = 7pt] at (4.5,-4.5) {$\alpha_{4}$};

\node[below = 7pt] at (-3,-3) {$\alpha_{1} + \alpha_{2}$};
\node[below = 7pt] at (0,-3) {$\alpha_{2} + \alpha_{3}$};
\node[below = 7pt] at (3,-3) {$\alpha_{3} + \alpha_{4}$};

\node[below = 7pt] at (-1.5,-1.5) {$\alpha_{1} + \alpha_{2}+ \alpha_{3}$};
\node[below = 7pt] at (1.5,-1.5) {$\alpha_{2} + \alpha_{3}+ \alpha_{4}$};
\node[below = 7pt] at (0,0) {$\alpha_{1} + \alpha_{2}+ \alpha_{3}+ \alpha_{4}$};

\draw (0,.5)--(-.5,0)--(4.5,-5)--(5,-4.5)--cycle;

\draw (-1.5,-1)--(-1, -1.5)--(-4.5,-5)--(-5,-4.5)--cycle;

\draw (0 , -2.5)--(-.5,-3)--(1.5,-5)--(2,-4.5)--cycle;

\draw (-1.5 , -4)--(-2, -4.5)--(-1.5,-5)--(-1, -4.5)--cycle;
\end{tikzpicture}
\end{center}
All based rectangles in the above diagram have blue vertices at their highest roots. Hence, the process has finished, and we have constructed the binary tree with $ 4+1 = 5$ leaves that is associated to the based rectangle partition $P$.

Suppose now that we are given a binary tree $F_{1}$ with $ n+1$ leaves. We now wish to construct a based rectangle partition of the root poset of $ A_{n}$ that corresponds to $F_{1}$. Let $ A$ denote the root node of $F_{1}$. Let $ B$ and $C$ denote the left child of $A$ and the right child of $A$ respectively. Let $ m$ denote the number of leaves on the subtree defined at the vertex $B $. Note that since $ F_{1}$ has $n+1$ leaves where $ n \geq 1$, it follows that $ 1 \leq m \leq n$. Since there are $n+1$ leaves in total on $F_{1}$, it follows that there are $ (n+1) - m$ number of leaves on the subtree defined at the vertex $C$. Construct a highest based rectangle $R_{h}$ of the root poset of $A_{n}$ such that $R_{h}$ contains the simple root $ \alpha_{m}$. Note that $ \Phi^{+} \sm R_{h}$ will be the root poset of $ (W_{J} , J) \times (W_{K} , K)$ where $ J = \{  s_{1} ,s_{2} , \dots, s_{m-1} \}$ and $K = \{ s_{m+1}, s_{m+2}, \dots ,  s_{n} \} $. Note furthermore that $(W_{J} , J) \cong A_{m-1}$ and $ (W_{K} , K) \cong A_{n-m}$. Hence, we can now apply recursion to the subtree defined at $B$ and the root poset of $ (W_{J} ,J)$. Similarly, we can also apply recursion to the subtree defined at $C$ and the root poset of $ (W_{K} ,K)$. This recursive process will continue until you reach a subtree with only one leaf. Once this recursive process ends, we will have constructed a based rectangle partition of the root poset of $ A_{n}$ associated to the binary tree $F_{1}$. We now demonstrate this process with an explicit example. Let $F_{1}$ denote the following binary tree:
\begin{center}
    \begin{tikzpicture}
    \coordinate (A) at (0,0);
    \coordinate (B) at (-2, -2); 
    \coordinate (C) at (2,-2);
    \coordinate (D) at (-3,-3);
    \coordinate (E) at (-1 ,-3 );
    \coordinate (F) at ( 1,-3 );
    \coordinate (G) at (3 ,-3 );
    \coordinate (H) at (0 ,-4 );
    \coordinate (I) at (2 , -4);

    \node[] at (0,.5) {A};
    \node[] at (-2,-1.5) {B};
    \node[] at (2,-1.5) {C};
    \node[] at (-3,-2.5) {D};
    \node[] at (-1, -2.5) {E};
    \node[] at (1,-2.5) {F};
    \node[] at (3, -2.5) {G};
    \node[] at (0,-3.5) {H};
    \node[] at (2, -3.5) {I};

    \draw[fill=blue , blue] (A) circle (3pt);
    \draw[fill=blue , blue] (C) circle (3pt);
    \draw[fill=blue , blue] (B) circle (3pt);
    \draw[fill=blue , blue] (D) circle (3pt);
    \draw[fill=blue , blue] (E) circle (3pt);
    \draw[fill=blue , blue] (F) circle (3pt);
    \draw[fill=blue , blue] (G) circle (3pt);
    \draw[fill=blue , blue] (H) circle (3pt);
    \draw[fill=blue , blue] (I) circle (3pt);

    \draw[blue , thick] (A)--(C);
    \draw[blue , thick] (A)--(B);
    \draw[blue , thick] (B)--(D);
    \draw[blue , thick] (B)--(E);
    \draw[blue , thick] (C)--(F);
    \draw[blue , thick] (C)--(G);
    \draw[blue , thick] (F)--(H);
    \draw[blue , thick] (F)--(I);
    
\end{tikzpicture}
\end{center}
Since there are 5 = 4+1 leaves, we will be partitioning the root poset of $A_{4} $. Since $ B$ is the left child of $A$, and since the subtree defined at $ B$ has 2 leaves, it follows that the highest based rectangle $R_{h}$ will contain the simple root $ \alpha_{2}$:
\begin{center}
\begin{tikzpicture}
\foreach  \y in {0,...,3}
    \foreach \x in {0,...,\y} 
        \fill [black] ( 3*\x -1.5*\y , -1.5*\y) circle [radius=0.1];

\node[below = 7pt] at (-4.5,-4.5) {$\alpha_{1}$};
\node[below = 7pt] at (-1.5,-4.5) {$\alpha_{2}$};
\node[below = 7pt] at (1.5,-4.5) {$\alpha_{3}$};
\node[below = 7pt] at (4.5,-4.5) {$\alpha_{4}$};

\node[below = 7pt] at (-3,-3) {$\alpha_{1} + \alpha_{2}$};
\node[below = 7pt] at (0,-3) {$\alpha_{2} + \alpha_{3}$};
\node[below = 7pt] at (3,-3) {$\alpha_{3} + \alpha_{4}$};

\node[below = 7pt] at (-1.5,-1.5) {$\alpha_{1} + \alpha_{2}+ \alpha_{3}$};
\node[below = 7pt] at (1.5,-1.5) {$\alpha_{2} + \alpha_{3}+ \alpha_{4}$};
\node[below = 7pt] at (0,0) {$\alpha_{1} + \alpha_{2}+ \alpha_{3}+ \alpha_{4}$};

\draw[black] (0,.5)--(-3.5,-3)--(-1.5,-5)--(2,-1.5)--cycle;
\end{tikzpicture}
\end{center}
Note that $ \{ \alpha_{1} \}$ is the root poset of $ (W_{ \{ s_{1} \} }, \{ s_{1} \} )$ and that $ \{ \alpha_{3} , \alpha_{3} + \alpha_{4} , \alpha_{4}  \}$ is the root poset of $ (W_{ \{ s_{3}, s_{4} \} }, \{ s_{3}, s_{4} \} )$. Thus, we now apply the recursive process to $ \{ \alpha_{1} \}$ and the subtree defined at $B$:

\begin{center}
    \begin{tikzpicture}
   
    \coordinate (B) at (-2, -2); 
    
    \coordinate (D) at (-3,-3);
    \coordinate (E) at (-1 ,-3 );

    \node[] at (-2,-1.5) {B};
    
    \node[] at (-3,-2.5) {D};
    \node[] at (-1, -2.5) {E};

    \draw[fill=blue , blue] (B) circle (3pt);
    \draw[fill=blue , blue] (D) circle (3pt);
    \draw[fill=blue , blue] (E) circle (3pt);

    \draw[blue , thick] (B)--(D);
    \draw[blue , thick] (B)--(E);

\end{tikzpicture}
\end{center}
Similarly, we will also apply the recursive process to $ \{ \alpha_{3} , \alpha_{3} + \alpha_{4} , \alpha_{4}  \}$ and the subtree defined at $C$:
\begin{center}
    \begin{tikzpicture}
    
    \coordinate (C) at (2,-2);
    
    \coordinate (F) at ( 1,-3 );
    \coordinate (G) at (3 ,-3 );
    \coordinate (H) at (0 ,-4 );
    \coordinate (I) at (2 , -4);

    \node[] at (2,-1.5) {C};
    \node[] at (1,-2.5) {F};
    \node[] at (3, -2.5) {G};
    \node[] at (0,-3.5) {H};
    \node[] at (2, -3.5) {I};

    \draw[fill=blue , blue] (C) circle (3pt);
    \draw[fill=blue , blue] (F) circle (3pt);
    \draw[fill=blue , blue] (G) circle (3pt);
    \draw[fill=blue , blue] (H) circle (3pt);
    \draw[fill=blue , blue] (I) circle (3pt);

    \draw[blue , thick] (C)--(F);
    \draw[blue , thick] (C)--(G);
    \draw[blue , thick] (F)--(H);
    \draw[blue , thick] (F)--(I);
    
\end{tikzpicture}
\end{center}
Once these two recursive processes are complete, we will end up with the following based rectangle partition of $A_{4}$:

\begin{center}
\begin{tikzpicture}
\foreach  \y in {0,...,3}
    \foreach \x in {0,...,\y} 
        \fill [black] ( 3*\x -1.5*\y , -1.5*\y) circle [radius=0.1];

\node[below = 7pt] at (-4.5,-4.5) {$\alpha_{1}$};
\node[below = 7pt] at (-1.5,-4.5) {$\alpha_{2}$};
\node[below = 7pt] at (1.5,-4.5) {$\alpha_{3}$};
\node[below = 7pt] at (4.5,-4.5) {$\alpha_{4}$};

\node[below = 7pt] at (-3,-3) {$\alpha_{1} + \alpha_{2}$};
\node[below = 7pt] at (0,-3) {$\alpha_{2} + \alpha_{3}$};
\node[below = 7pt] at (3,-3) {$\alpha_{3} + \alpha_{4}$};

\node[below = 7pt] at (-1.5,-1.5) {$\alpha_{1} + \alpha_{2}+ \alpha_{3}$};
\node[below = 7pt] at (1.5,-1.5) {$\alpha_{2} + \alpha_{3}+ \alpha_{4}$};
\node[below = 7pt] at (0,0) {$\alpha_{1} + \alpha_{2}+ \alpha_{3}+ \alpha_{4}$};

\draw[black] (0,.5)--(-3.5,-3)--(-1.5,-5)--(2,-1.5)--cycle;
\draw[black] (-4.5,-4)--(-5,-4.5)--(-4.5,-5)--(-4,-4.5)--cycle;
\draw[black] (3,-2.5)--(2.5,-3)--(4.5, -5)--(5,-4.5)--cycle;
\draw[black] (1.5, -4)--(1, -4.5)--(1.5, -5)--(2, -4.5)--cycle;
\end{tikzpicture}
\end{center}

One can easily verify that the process of associating a binary tree with $n+1$ leaves to a based rectangle partition of $A_{n}$ and the process of associating a based rectangle partition of $A_{n}$ to some binary tree with $n+1$ leaves are inverse processes of one another. Furthermore, if we look closely at the following binary tree lying on top of its associated based rectangle partition, we see that compatible subtriangle flips of the based rectangle partition correspond to subtree flips of the the binary tree:
\begin{center}
\begin{tikzpicture}
\foreach  \y in {0,...,3}
    \foreach \x in {0,...,\y} 
        \fill [black] ( 3*\x -1.5*\y , -1.5*\y) circle [radius=0.1];

\fill [blue] (0,0) circle [radius=0.1];

\draw[blue] (0,0)-- (-5.5,-5.5);
\draw[blue] (0,0)--(5.5,-5.5);

\fill [blue] (-5.5,-5.5) circle [radius=0.1];
\fill [blue] (5.5,-5.5) circle [radius=0.1];

\fill [blue] (-1.5,-1.5) circle [radius = 0.1];

\draw[blue] (-1.5,-1.5)--(2.5 ,-5.5);
\fill[blue] (2.5, -5.5) circle [radius = 0.1];

\fill[blue] (0,-3) circle [radius = 0.1];

\draw[blue] (0,-3)--(-2.5 , -5.5);
\fill[blue] (-2.5,-5.5) circle [radius = 0.1];

\fill[blue] (-1.5, -4.5) circle [radius = 0.1];
\draw[blue] (-1.5, -4.5)--(-.5 , -5.5);
\fill[blue] (-.5,-5.5) circle [radius = 0.1];

\node[below = 7pt] at (-4.5,-4.5) {$\alpha_{1}$};
\node[below = 7pt] at (-1.5,-4.5) {$\alpha_{2}$};
\node[below = 7pt] at (1.5,-4.5) {$\alpha_{3}$};
\node[below = 7pt] at (4.5,-4.5) {$\alpha_{4}$};

\node[below = 7pt] at (-3,-3) {$\alpha_{1} + \alpha_{2}$};
\node[below = 7pt] at (0,-3) {$\alpha_{2} + \alpha_{3}$};
\node[below = 7pt] at (3,-3) {$\alpha_{3} + \alpha_{4}$};

\node[below = 7pt] at (-1.5,-1.5) {$\alpha_{1} + \alpha_{2}+ \alpha_{3}$};
\node[below = 7pt] at (1.5,-1.5) {$\alpha_{2} + \alpha_{3}+ \alpha_{4}$};
\node[below = 7pt] at (0,0) {$\alpha_{1} + \alpha_{2}+ \alpha_{3}+ \alpha_{4}$};

\draw (0,.5)--(-.5,0)--(4.5,-5)--(5,-4.5)--cycle;

\draw (-1.5,-1)--(-1, -1.5)--(-4.5,-5)--(-5,-4.5)--cycle;

\draw (0 , -2.5)--(-.5,-3)--(1.5,-5)--(2,-4.5)--cycle;

\draw (-1.5 , -4)--(-2, -4.5)--(-1.5,-5)--(-1, -4.5)--cycle;
\end{tikzpicture}
\end{center}
Thus, by Proposition \ref{binarytreeisom}, we have established a bijection between based rectangle partitions of $A_{n}$ modulo compatible subtriangle flips and binary trees with $n+1$ leaves modulo isomorphism. This completes the proof.
\end{proof}

We would like to classify the elements of the $A_{n}$ Coxeter system that appear as edges of some Coxeter $n$-cube in $A_{n}$. We define $\Edge{n}$ to denote the set of all non-identity elements $x$ that appear as an edge of some orientation of a Coxeter $n$-cube from $A_{n}$. 

\begin{ex}
    Consider the unique Coxeter square modulo orientation in $ A_{2}$:

    \[ \begin{tikzcd}
\sbullet \arrow[rr, "s_{1}s_{2}"] & & \sbullet \\
\\
\sbullet \arrow[uu, "s_{1}"] \arrow[rr, "s_{1}s_{2}"] & & \sbullet \arrow[uu, "s_{2}"]
\end{tikzcd}
\]
Hence, we see that $ s_{1} , s_{2} , s_{1}s_{2} \in \Edge{2} $. However, the above square can be reoriented as such:

\[ \begin{tikzcd}
\sbullet \arrow[rr, "s_{2}s_{1}"] & & \sbullet \\
\\
\sbullet \arrow[uu, "s_{2}"] \arrow[rr, "s_{2}s_{1}"] & & \sbullet \arrow[uu, "s_{1}"]
\end{tikzcd}
\]
Hence, we also have that $ s_{2}s_{1} \in \Edge{2}$. There are no other elements that appear as an edge of some reorientation of the above Coxeter square. Hence,

$$ \Edge{2} = \{ s_{1} , s_{2} , s_{1}s_{2} , s_{2}s_{1}  \}$$

\end{ex}

\begin{thm}
    We have the following recursive relation:

    $$ |\Edge{n+1}| = |\Edge{n}| + \frac{(n+1)(n+2)}{2}$$
    where $ |\Edge{1}| =1$. Hence:

    $$ |\Edge{n}| = \binom{n+2}{3} = \frac{n(n+1)(n+2)}{6}$$
\end{thm}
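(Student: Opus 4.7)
The plan is to show that $\Edge{n}$ is in bijection (via $x \mapsto \Phi_x$) with the collection of based rectangles of the $A_n$ root poset, and then count these rectangles combinatorially. Since elements of $W$ are determined by their left inversion sets (Proposition \ref{equivalenceofwro}), this bijection yields $|\Edge{n}|$ as the number of based rectangles in the $A_n$ root poset. The inclusion $\{x : \Phi_x \text{ is a based rectangle}\} \subseteq \Edge{n}$ is immediate from Theorem \ref{partitiontoncube}. For the reverse inclusion, I will argue by induction on $n$ using the recursive construction of Proposition \ref{ncubeexists}: a Coxeter $(n+1)$-cube $X'$ built from a smaller Coxeter cube $X \subseteq (W_J, J)$ and a Brink-Howlett generator $\nu = \nu(\alpha_s, \Pi_J)$ has edges which are either (a) edges of $X$ (based-rectangle by induction, using Proposition \ref{productcubesnum} to reduce to the irreducible factors), (b) copies of $\nu$ (a highest based rectangle by Proposition \ref{bhgenrectangle}(2)), or (c) conjugates $\nu y \nu^{-1}$ for $y$ an edge of $X$, for which a direct computation gives $\Phi_{\nu y \nu^{-1}} = \nu(\Phi_y)$ and since $\nu$ restricts to an isomorphism $\Phi^+_J \to \Phi^+_K$ of root subsystems, this image is again a based rectangle. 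Since every Coxeter $n$-cube arises from this recursive construction up to reorientation (Theorems \ref{basedrectanglepartition} and \ref{partitiontoncube}), and reorientations act on edge labels only by inversion and by Brink-Howlett-type conjugation (both of which preserve based-rectangle-ness in $A_n$ by the explicit flip formulas in the proof of Theorem \ref{firstbijection}), the induction closes.

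Counting based rectangles in the $A_n$ root poset is then a straightforward combinatorial exercise. Each based rectangle is uniquely specified by a base simple root $\alpha_k \in \Pi$ together with a support interval $\supp(\rho) = \{\alpha_i, \alpha_{i+1}, \ldots, \alpha_j\}$ containing $\alpha_k$, with $1 \leq i \leq k \leq j \leq n$. For fixed $k$ there are $k(n+1-k)$ such intervals, so
\[
|\Edge{n}| = \sum_{k=1}^{n} k(n+1-k) = (n+1)\cdot\frac{n(n+1)}{2} - \frac{n(n+1)(2n+1)}{6} = \frac{n(n+1)(n+2)}{6} = \binom{n+2}{3}.
\]
The recursion $|\Edge{n+1}| = |\Edge{n}| + \frac{(n+1)(n+2)}{2}$ then follows from the binomial identity $\binom{n+3}{3} - \binom{n+2}{3} = \binom{n+2}{2} = \frac{(n+1)(n+2)}{2}$, or equivalently by the direct interpretation that $\Edge{n+1} \setminus \Edge{n}$ consists of exactly those based rectangles in $A_{n+1}$ whose support contains $\alpha_{n+1}$; for each base $\alpha_k$ with $1 \leq k \leq n+1$ there are $k$ such intervals $\{\alpha_i, \ldots, \alpha_{n+1}\}$ with $i \leq k$, giving $\sum_{k=1}^{n+1} k = \frac{(n+1)(n+2)}{2}$ new elements. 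The base case $|\Edge{1}| = 1$ is immediate from the unique $A_1$ cube.

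The main obstacle will be the ``edge implies based rectangle'' direction of the bijection: the recursive construction straightforwardly controls edges created at each level of the induction, but verifying that all reorientation-induced transformations preserve the based-rectangle property requires carefully combining the explicit description in the proof of Theorem \ref{firstbijection} (terminal edges transform as $x_k \mapsto x_k^{-1}$ and $x_i \mapsto x_k^{-1} x_i x_k$ under a compatible subtriangle flip) with the observation that both inversion and conjugation by a Brink-Howlett generator act as symmetries of the based-rectangle structure in $A_n$. Once this characterization of $\Edge{n}$ is in place, the count and the recursion are routine.
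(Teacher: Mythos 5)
Your proposal is correct, and it rests on the same pivot as the paper's proof: $|\Edge{n}|$ equals the number of based rectangles in the $A_n$ root poset. The two arguments diverge in how they exploit this. First, you are more explicit than the paper about the bijection itself. The paper cites only Theorem \ref{partitiontoncube}, which yields just the containment of the set of based rectangles into $\Edge{n}$; the reverse containment (every edge label of every oriented cube has a based rectangle as its left inversion set) really needs Theorem \ref{basedrectanglepartition} together with the fact that any edge of an oriented cube becomes a terminal edge, with unchanged label, after flipping the parallel classes not containing it. Your induction through the recursive construction of Proposition \ref{ncubeexists}, supplemented by the observation that reorientations only invert or conjugate labels (operations that preserve based rectangles, since based rectangles are exactly inversion sets of Brink--Howlett generators of irreducible standard parabolics by Proposition \ref{bhgenrectangle}), fills this in correctly, if a bit more laboriously than the reorientation argument just sketched. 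Second, the counting is organized differently: the paper splits the based rectangles of $A_{n+1}$ into those contained in $\Phi^{+}_{J}$ for $J = S \sm \{s_{n+1}\}$ (there are $|\Edge{n}|$ of these) and those whose support contains $\alpha_{n+1}$, counted by highest root as $\sum_{k=0}^{n}(k+1) = \frac{(n+1)(n+2)}{2}$, so it proves the recursion first and then solves it; you instead parametrize all based rectangles by a base $\alpha_k$ together with a support interval $[i,j]$ satisfying $i \leq k \leq j$, obtain $\sum_{k=1}^{n} k(n+1-k) = \binom{n+2}{3}$ in closed form, and recover the recursion from Pascal's rule (your alternative count of the rectangles meeting $\alpha_{n+1}$, organized by base, is the paper's count organized by highest root). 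Your route delivers the closed formula more directly and makes the injectivity of the parametrization explicit; the paper's route mirrors the recursive shape of the statement. Both are sound.
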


\begin{proof}
    If $ s$ denotes the unique non-identity element of $ A_{1}$, then the unique Coxeter $1$-cube of $ A_{1}$ modulo orientation is the following:

     \[
\begin{tikzcd}[row sep=1.5em, column sep = 1.5em]
\sbullet \arrow[rr, "s"] & & \sbullet
\end{tikzcd}
\]
Since $ s = s^{-1}$, we conclude that $ \Edge{1} = \{ s \}$. Hence, $ |\Edge{1}| =1$. Clearly, $ \binom{1+2}{3} = \binom{3}{3} = 1$, so the explicit formula for $ |\Edge{n}|$ in the latter half of the proposition holds true in the base case.

Let us now consider how to compute $ |\Edge{n+1}|$. By Theorem \ref{partitiontoncube}, we know that the elements of $ \Edge{n+1}$ correspond to based rectangles in the root poset $ \Phi^{+}$ of $ A_{n+1}$. Hence, we just need to enumerate the number of based rectangles in the root poset $\Phi^{+}$ of $ A_{n+1}$. Let $S = \{ s_{1} , s_{2} , \dots , s_{n} , s_{n+1}  \}$ denote the Coxeter generators for the $A_{n}$ Coxeter system. Note that for $ J = S \sm \{ 
s_{n+1} \}$, the root subposet $ \Phi^{+}_{J}$ associated to $ (W_{J} , J)$ is isomorphic to the root poset of $ A_{n}$. Hence, the number of based rectnagles within the root subposet $ \Phi^{+}_{J}$ is equal to $ |\Edge{n}|$. We just need to calculate the based rectangles in $ \Phi^{+}$ that are not fully contained in $ \Phi^{+}_{J}$. But note that if a based rectangle is not contained in $ \Phi^{+}_{J}$, then the root of highest depth  from the based rectangle must be an element of $ \Phi^{+} \sm \Phi^{+}_{J}$. Note that the elements of $ \Phi^{+} \sm \Phi^{+}_{J}$ are of the form $ \sum_{i=0}^{k} \alpha_{(n+1) - i}$ where $ k = 0,1,2, \dots, n$. But note that a based rectangle with highest root $\sum_{i=0}^{k} \alpha_{(n+1) - i} $ will have exactly one simple root from the following set: $\{ \alpha_{(n+1)} , \alpha_{n} , \alpha_{n-1}, \dots , \alpha_{n+1-k} \} $. Hence, we will have $ k+1$ options for each fixed $k$. Adding up all possible options as $k$ ranges from $0$ to $n$, we obtain:

$$ 1 +2 +3+ \dots + n+(n+1) = \frac{(n+1)(n+2)}{2}$$
Thus, the total number of based rectangles in the root poset $ \Phi^{+}$ of $A_{n+1}$ is equal to $ |\Edge{n}| + \frac{(n+1)(n+2)}{2}$. Therefore,

$$ |\Edge{n+1}| = |\Edge{n}| + \frac{(n+1)(n+2)}{2}$$
One can now verify that the formula $ |\Edge{n}| = \binom{n+2}{3}$ satisfies the above recurrence relation with $ |\Edge{1}| = 1$. Hence, by induction, $ |\Edge{n}| = \binom{n+2}{3}$ for all $n$.
    
\end{proof}

Recall that the Coxeter system $ A_{n}$ is isomorphic as a group to $ S_{n+1}$, where $ S_{n+1}$ denotes the symmetric group on the set $ [n+1]:= \{ 1,2, 3, \dots, n,n+1\}$. Thus, an element $ x$ of $ A_{n}$ can be viewed as a permutation on the set $ [n+1] $. If $ x$ is an element of $ A_{n}$, we say that $ x$ is a \emph{bigrassmannian permutation} if there exists a unique ordered pair $ (i,j) \in [n]^{2}$ such that $ x(i) > x(i+1)$ and $ x^{-1}(j) > x^{-1}(j+1)$. Equivalently, one can show that $ x$ is a bigrassmannian permutation of $ A_{n}$ if and only if $ |\Pi \cap \Phi_{x}| =1$ and $ |\Pi \cap \Phi_{x^{-1}}| = 1$.

\begin{thm}
    Let $ x$ be an element of $ A_{n}$. Then $ x \in \Edge{n}$ if and only if $ x$ is a bigrassmannian permutation. 
\end{thm}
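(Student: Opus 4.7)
The plan is to reduce the theorem to a combinatorial statement about permutations. Combining Theorem \ref{basedrectanglepartition} with Theorem \ref{partitiontoncube}, together with the fact (implicit in the proof of Theorem \ref{firstbijection}) that every edge of a Coxeter $n$-cube can be realized as a terminal edge of some reorientation, one obtains the identification
\[
\Edge{n} \;=\; \{\, x \in A_n \setminus \{1\} \mid \Phi_x \text{ is a based rectangle in the root poset of } A_n \,\}.
\]
This identification is already used implicitly in the proof of the preceding enumeration theorem. Hence it suffices to prove: \emph{$\Phi_x$ is a based rectangle if and only if $x$ is bigrassmannian}.

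I would work under the identification $A_n \cong S_{n+1}$, using that $\alpha_p + \alpha_{p+1} + \cdots + \alpha_{q-1} \in \Phi_x$ iff $x^{-1}(p) > x^{-1}(q)$. A based rectangle then takes the explicit form
\[
R(i,k,j) \;=\; \{\alpha_p + \alpha_{p+1} + \cdots + \alpha_{q-1} \mid i \le p \le k < q \le j\},\qquad 1 \le i \le k < j \le n+1,
\]
whose only simple root is $\alpha_k$. The central identification is that $\Phi_x = R(i,k,j)$ precisely when $x$ is the \emph{block-swap permutation}
\[
x \;=\; \bigl(1,\, 2, \ldots,\, i-1,\; k+1,\, k+2, \ldots,\, j,\; i,\, i+1, \ldots,\, k,\; j+1, \ldots,\, n+1\bigr).
\]
I would verify this by a direct inversion computation: each of the four sub-blocks of the displayed one-line notation is internally increasing, and the only pairs $(p,q)$ with $p < q$ having $p$ to the right of $q$ are exactly those with $p \in [i,k]$ and $q \in [k+1, j]$, yielding $\Phi_x = R(i,k,j)$. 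Since an element of $W$ is determined by its left inversion set (Proposition \ref{equivalenceofwro}), this forces the equivalence, and an analogous computation (or the identity $x(\Phi_{x^{-1}}) = -\Phi_x$) gives $\Phi_{x^{-1}} = R(i,\, i+j-k-1,\, j)$, again a based rectangle.

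It remains to match block-swaps with bigrassmannianness. By the remark just before the theorem, $x$ is bigrassmannian iff both $x$ and $x^{-1}$ have a single descent in one-line notation. For the block swap above, $x$ has its unique descent between the entries $j$ and $i$, and its inverse is again a block swap with a single descent, so block swaps are bigrassmannian. The converse is the main obstacle: given bigrassmannian $x$ with unique descents at positions $d$ (of $x$) and $d'$ (of $x^{-1}$), set $V_1 = \{x(1), \ldots, x(d)\}$ and $V_2 = [n+1] \setminus V_1$. A short case analysis comparing $V_1 \cap [1,d']$ with $V_2 \cap [1,d']$, and similarly on $[d'+1, n+1]$, forces $V_1 \cap [1,d'] = [1,m]$, $V_2 \cap [1,d'] = [m+1,d']$, $V_1 \cap [d'+1,n+1] = [d'+1,M]$, and $V_2 \cap [d'+1,n+1] = [M+1,n+1]$ for some integers $m, M$; unpacking the one-line notation of $x$ from this decomposition yields precisely the block-swap form above with parameters $(i,k,j) = (m+1,\, d',\, M)$. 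Combining with the earlier identification of $\Edge{n}$ with based rectangles completes the proof.
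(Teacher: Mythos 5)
Your proposal is correct, but it closes the argument by a genuinely different route than the paper. Both proofs begin identically: using Theorems \ref{basedrectanglepartition} and \ref{partitiontoncube} (plus the implicit fact that any edge of a Coxeter $n$-cube becomes a terminal edge after reorientation), one identifies $\Edge{n}$ with the set of elements whose left inversion sets are based rectangles, which immediately gives $\Edge{n} \seq \mathcal{B}_{n}$ since a based rectangle contains a unique simple root and $x \in \Edge{n}$ implies $x^{-1} \in \Edge{n}$. The paper then finishes by a cardinality argument: it invokes its enumeration theorem $|\Edge{n}| = \binom{n+2}{3}$ and cites Engbers--Hammett for the fact that there are exactly $\binom{n+2}{3}$ bigrassmannian permutations in $A_{n}$, so the inclusion of finite sets of equal size is an equality. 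You instead prove the reverse inclusion directly: you parametrize based rectangles as $R(i,k,j)$, compute that the unique element with inversion set $R(i,k,j)$ is the block-swap permutation, verify its inverse is again a block-swap (so $\Phi_{x^{-1}} = R(i,\,i+j-k-1,\,j)$), and then show by a descent/membership analysis that bigrassmannian permutations are exactly block-swaps. I checked the key computations: the translation $\alpha_p + \cdots + \alpha_{q-1} \in \Phi_x \iff x^{-1}(p) > x^{-1}(q)$ is right, the inversion set of the block-swap is exactly $R(i,k,j)$, and the converse case analysis does force the claimed four-block structure (one $V_2$-to-$V_1$ transition in the membership sequence of values yields $V_1 = [1,a] \cup [a+b+1,a+b+c]$, which is precisely the block-swap with $(i,k,j) = (a+1,\,a+b,\,a+b+c)$). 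What each approach buys: the paper's proof is shorter and reuses the enumeration theorem, but it is non-constructive and depends on an external reference for the bigrassmannian count; yours is self-contained, requires no citation, explicitly exhibits the elements of $\Edge{n}$ as block-swap permutations, and as a by-product rederives the count $\binom{n+2}{3}$ by counting triples $1 \le i \le k < j \le n+1$. The only soft spot is that your converse direction is sketched rather than written out, but the sketch is accurate and routine to complete.
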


\begin{proof}
    Let $ \mathcal{B}_{n}$ denote the set of bigrassmanian permutations in the $ A_{n}$ Coxeter system. We prove that $ \Edge{n} = \mathcal{B}_{n}$. Note that if $ x \in \Edge{n}$, then $ \Phi_{x}$ is a based rectangle in the root poset of $ A_{n}$, meaning that $ |\Phi_{x} \cap \Pi| = 1$. Note also that if $ x \in \Edge{n}$, then we can reorient the Coxeter $n$-cube in which $ x$ appears so that $ x^{-1}$ appears as an edge of the Coxeter $n$-cube. Hence, $ x^{-1} \in \Edge{n}$, and thus we can conclude that $ |\Pi \cap \Phi_{x^{-1}}| = 1$. We have just shown that if $ x \in \Edge{n}$, then $ x \in \mathcal{B}_{n}$. Thus, we conclude that 

    $$ \Edge{n} \seq \mathcal{B}_{n}$$
    But note that we also proved that $ |\Edge{n}| = \binom{n+2}{3}$. According to \cite{engbers2015comparability}, the number of bigrassmanian permutations in the $ A_{n}$ Coxeter system is $ \binom{n+2}{3}$. Since $\Edge{n}$ and $ \mathcal{B}_{n} $ are finite sets of the same cardinality with $ \Edge{n} \seq \mathcal{B}_{n}$, it follows that

    $$ \Edge{n} = \mathcal{B}_{n}$$

\end{proof}

\bibliographystyle{abbrv}

\end{document}